\documentclass{article}
\usepackage{
amsmath,
amsfonts,
latexsym, 
amsrefs,
amssymb,
mathtools,
tikz,
tabularx,
minibox,
amsthm,
}
\usepackage{hyperref}
\usepackage{tocloft}
\usepackage[noadjust]{cite}
\usepackage{graphicx}
\usepackage{bm}
\usepackage{dsfont}
\usepackage{stmaryrd}
\usepackage{enumerate}
\usepackage{mathrsfs}
\usepackage{wrapfig}
\usepackage{color}
\usepackage{titlesec}
\usepackage{titletoc}

\usetikzlibrary{shapes}
\usetikzlibrary{intersections}

\setlength{\textwidth}{6.6in}
\setlength{\oddsidemargin}{0in}
\setlength{\evensidemargin}{0in}
\setlength{\topmargin}{0.45in}
\setlength{\textheight}{8in}
\setlength{\footskip}{0.4in}
\setlength{\headsep}{0in}
\setlength{\parindent}{0pt}

\flushbottom
\numberwithin{equation}{section}
\setlength{\unitlength}{1cm}


\newcommand{\bla}{\bm{\lambda}}
\newcommand{\bx}{\bm{x}}

\newcommand{\eps}{\varepsilon}

\newcommand{\rd}{{\rm d}}

\newcommand{\bZ}{{\mathbb Z}}
\newcommand{\bu}{{\mathbf u}}
\newcommand{\bv}{{\mathbf v}}

\newcommand{\be}{\begin{equation}}
\newcommand{\ee}{\end{equation}}

\newcommand{\la}{\lambda}

\setcounter{tocdepth}{1}

\newcommand{\V}{\mathrm E}
\newcommand{\D}{\mathbb{D}}

\newcommand{\Gin}{\rm{Gin}}
\newcommand{\VGamma}{\Gamma_{V}}
\newcommand{\CUE}{\mathrm{CUE}}

\DeclareMathOperator{\PP}{\mathbb{P}}
\renewcommand{\P}{\mathbb{P}}
\newcommand{\E}{\mathbb{E}}
\newcommand{\R}{\mathbb{R}}
\newcommand{\C}{\mathbb{C}}
\newcommand{\N}{\mathbb{N}}
\newcommand{\Z}{\mathbb{Z}}

\newcommand{\dd}{\mathrm{d}}

\theoremstyle{plain} 
\newtheorem{theorem}{Theorem}[section]
\newtheorem*{theorem*}{Theorem}
\newtheorem{lemma}[theorem]{Lemma}
\newtheorem*{lemma*}{Lemma}
\newtheorem{corollary}[theorem]{Corollary}
\newtheorem*{corollary*}{Corollary}
\newtheorem{proposition}[theorem]{Proposition}
\newtheorem*{proposition*}{Proposition}
\newtheorem{fact}[theorem]{Fact}
\newtheorem*{fact*}{Fact}

\newtheorem{definition}[theorem]{Definition}
\newtheorem*{definition*}{Definition}

\newtheorem*{example*}{Example}
\newtheorem{remark}[theorem]{Remark}

\newtheorem*{remark*}{Remark}
\newtheorem*{remarks*}{Remarks}

\makeatletter
\renewcommand{\section}{\@startsection
{section}
{1}
{0mm}
{-2\baselineskip}
{1\baselineskip}
{\normalfont\large\scshape\centering}} 
\makeatother

\makeatletter
\renewcommand{\subsection}{\@startsection
{subsection}
{2}
{0mm}
{-\baselineskip}
{1 \baselineskip}
{\normalfont\scshape}} 
\makeatother


\makeatletter
\renewcommand{\subsubsection}{\@startsection{subsubsection}{3}{\z@}%
  {3.25ex \@plus 1ex \@minus .2ex}{-1em}{\normalfont\normalsize\itshape}}
\makeatother
\usepackage[T1]{fontenc}

\renewcommand{\rm}{\mathrm}

\setcounter{secnumdepth}{5}
\setcounter{tocdepth}{2}


\def\@empty{}

\def\author#1{\par
    {\centering{\authorfont#1}\par\vspace*{0.05in}}}

\def\titlefont{\fontsize{12}{15} \centering{}}
\def\authorfont{\fontsize{12}{15}}

\let\affiliationfont\rhfont

\def\address#1{\par
    {\centering{\affiliationfont#1\par}}\par\vspace*{11pt}
}

\def\keywords#1{\par
    \vspace*{8pt}
    {\authorfont{\leftskip18pt\rightskip\leftskip
    \noindent{\it\small{Keywords}}\/:\ #1\par}}\vskip-12pt}

\def\body{
\setcounter{footnote}{0}
\def\thefootnote{\alph{footnote}}
\def\@makefnmark{{$^{\rm \@thefnmark}$}}
}

\def\title#1{ 
    \thispagestyle{plain}
    \vspace*{-14pt}
    \vskip 79pt
    {\centering{\titlefont #1\par}}%
    \vskip 1em
}

\setlength\cftparskip{2pt}
\setlength\cftbeforesecskip{4pt}
\setlength\cftaftertoctitleskip{10pt}

\begin{document}

~\vspace{-2cm}

\title{\textsc{Powers of Ginibre Eigenvalues}}

\vspace{0.3cm}

 \author{Guillaume Dubach}

\address{Courant Institute, New York University \\
dubach@cims.nyu.edu}
\vspace{0.3cm}

\begin{abstract}
\noindent We study the images of the complex Ginibre eigenvalues under the power maps $\pi_M: z \mapsto z^M$, for any integer $M$. We establish the following equality in distribution,
$$ 
\rm{Gin}(N)^M \stackrel{d}{=} \bigcup_{k=1}^M \rm{Gin} (N,M,k),
$$
where the so-called Power-Ginibre distributions $\rm{Gin}(N,M,k)$ form $M$ independent determinantal point processes. The decomposition can be extended to any radially symmetric normal matrix ensemble, and generalizes Rains' superposition theorem for the CUE (see \cite{Rains}) and Kostlan's independence of radii (see \cite{Kost}) to a wider class of point processes. Our proof technique also allows us to recover and generalize a result by Edelman and La Croix \cite{EdelmanLacroix} for the GUE. \medskip

Concerning the Power-Ginibre blocks, we prove convergence of fluctuations of their smooth linear statistics to independent gaussian variables, coherent with the link between the complex Ginibre Ensemble and the Gaussian Free Field \cite{RiderVirag}.  \medskip

Finally, some partial results about two-dimensional beta ensembles with radial symmetry and even parameter $\beta$ are discussed, replacing independence by conditional independence.
\end{abstract}

\keywords{Complex Ginibre Ensemble, Independence, Power maps, Radially Symmetric Determinantal Point Process, Gaussian Free Field, Beta Ensembles.}
\vspace{.5cm}

\tableofcontents

\newpage
\section{Introduction}
\subsection{Motivations}
\noindent The complex Ginibre ensemble, that we will denote by $\Gin(N)$, consists of matrices whose coefficients are independent and identically distributed complex Gaussian random variables. With the appropriate scaling,
\begin{equation}\label{eqn:Gini}
G =(G_{ij})_{i,j=1}^N,
\qquad
G_{ij} \stackrel{d}{=} \mathscr{N}_{\mathbb{C}} \Big(0, \frac{1}{N} \Big).
\end{equation}

It has been known since the seminal work of Ginibre (see \cite{Ginibre}) that the eigenvalues of such a matrix form a determinantal point process. The joint density is proportional to
$$\prod_{1 \leq i<j \leq N} |z_i-z_j|^2 e^{- N \sum_{i=1}^N |z_i|^2}$$
with respect to the Lebesgue measure on $\mathbb{C}$. \medskip

This density shows strong interaction (repulsion) between eigenvalues. However, remarkably, Kostlan has shown \cite{Kost} that their {\it radii} are independent in the following sense.


\begin{theorem}[Kostlan]\label{Kostlan?} If $\{ \lambda_1, \dots, \lambda_N \}$ is distributed according to $\Gin(N)$, he following equality in distribution holds :
$$
\{ N |\lambda_1|^2, \dots, N |\lambda_N|^2 \}
\stackrel{d}{=}
\{ \gamma_1, \dots, \gamma_N \},
$$
where the gamma variables are independent, with parameters $1,2, \dots, N$.
\end{theorem}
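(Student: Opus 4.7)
The strategy is a direct computation: start from the joint eigenvalue density of $\Gin(N)$, pass to polar coordinates, integrate out the angular variables using orthogonality of characters on the circle, and recognize the resulting density of $(|\lambda_i|^2)_i$ as the symmetrization of a product of independent Gamma densities.

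First I would write the joint density of $(\lambda_1,\dots,\lambda_N)$ in polar coordinates $\lambda_i = r_i e^{i\theta_i}$, picking up a Jacobian $\prod_i r_i$. The key algebraic step is to expand the Vandermonde using
\[
\prod_{1\leq i<j\leq N}|\lambda_i-\lambda_j|^2 \;=\; \bigl|\det\bigl(\lambda_i^{j-1}\bigr)_{i,j}\bigr|^2 \;=\; \sum_{\sigma,\tau\in S_N}\mathrm{sgn}(\sigma)\mathrm{sgn}(\tau)\prod_{i=1}^N \lambda_i^{\sigma(i)-1}\overline{\lambda_i}^{\,\tau(i)-1}.
\]
After substituting $\lambda_i = r_i e^{i\theta_i}$, each term acquires a phase $e^{i(\sigma(i)-\tau(i))\theta_i}$. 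Integrating each $\theta_i$ over $[0,2\pi]$ kills every term except those with $\sigma=\tau$, by orthogonality of $\{e^{ik\theta}\}_{k\in\Z}$. This collapses the double sum to a single sum over $S_N$ and leaves only the diagonal monomials $\prod_i r_i^{2(\sigma(i)-1)}$.

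Next, I would change variables $y_i = N r_i^2$ (so $r_i\, dr_i = dy_i/(2N)$). The resulting joint density of $(y_1,\dots,y_N)$, viewed as an ordered tuple, becomes proportional to
\[
\sum_{\sigma\in S_N}\prod_{i=1}^N y_i^{\sigma(i)-1} e^{-y_i} \;=\; e^{-\sum_i y_i}\,\mathrm{per}\!\left(y_i^{j-1}\right)_{1\leq i,j\leq N}.
\]
On the other hand, if $Y_1,\dots,Y_N$ are independent with $Y_k$ of density $\frac{1}{(k-1)!}y^{k-1}e^{-y}\1_{y>0}$ (i.e.\ a Gamma variable with parameter $k$), then the density of the unordered multiset $\{Y_1,\dots,Y_N\}$, written as a symmetric function on $\R_+^N$, is exactly $\frac{1}{\prod_{k=1}^N(k-1)!}$ times the same permanent. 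I would then match normalization constants using the standard Ginibre partition function $Z_N = \pi^N\,N^{-N(N+1)/2}\prod_{k=1}^N k!$ to confirm that the two densities are equal, concluding the proof.

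The only real obstacle is bookkeeping: tracking the Jacobian, the $N!$ coming from ordered versus unordered tuples, the factors $\prod(k-1)!$ from the Gamma normalizations, and the Ginibre normalizing constant, to verify that the two symmetric densities agree exactly rather than merely up to a constant. Everything else — the Vandermonde expansion and the angular integration — is a one-line orthogonality argument.
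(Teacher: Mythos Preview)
Your proof is correct. Expanding the squared Vandermonde as a double sum over permutations, integrating out the angles to collapse $\sigma=\tau$, and recognising the resulting permanent as the symmetrised density of independent Gamma$(k)$ variables is essentially Kostlan's original argument, and the constants do match once you track the Jacobian $r_i\,dr_i=dy_i/(2N)$, the factor $(2\pi)^N$ from the angular integration, and the Ginibre partition function.

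However, this is \emph{not} the route the paper takes. The paper deliberately avoids manipulating the joint density and instead characterises the law of the set $\{N|\lambda_i|^2\}$ through its \emph{product statistics}: by Andr\'eief's identity (Corollary~\ref{ProStat}), for any polynomial $g$ one has $\E\big(\prod_i g(N|\lambda_i|^2)\big)=\det(f_{i,j})$ with $f_{i,j}=\frac{1}{(j-1)!}\int z^{i-1}\bar z^{\,j-1}g(|z|^2)\,d\mu(z)$; radial symmetry of $g(|z|^2)$ forces the matrix to be diagonal with $f_{i,i}=\E(g(\gamma_i))$, whence the product statistic factors as $\prod_i\E(g(\gamma_i))$. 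One then invokes Lemma~\ref{sympol} (product symmetric polynomials span all symmetric polynomials) and moment determinacy to conclude. Your direct density computation is shorter and more elementary for this particular statement, and does not need the moment-problem appendix. The paper's detour through Andr\'eief, by contrast, is the whole point of the section: it is a warm-up for the Power--Ginibre decomposition (Theorem~\ref{PG2}), where the same manoeuvre---apply Corollary~\ref{ProStat} with $g(z^M)$, observe the matrix becomes $M$-striped, factor the determinant via Lemma~\ref{Mstriped}---yields the block independence that a bare density expansion would obscure.
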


The same holds in the more general setting of radially symmetric point processes. With techniques reviewed in \cite{HKPV}, Hough, Krishnapur, Peres and Vir\'ag established a broader version of Kostlan's theorem, as well as the independence of high powers. In the Ginibre case, it can be stated as follows.

\begin{theorem}[Hough, Krishnapur, Peres, Vir\'ag]\label{HighPowers1} For any integer $M \geq N$, the following equality in distribution holds:
$$
\{ N^{M/2} \lambda_1^M, \dots, N^{M/2} \lambda_N^M \} 
\stackrel{d}{=}
\{\gamma_1^{M/2} e^{i \theta_1}, \dots, \gamma_N^{M/2} e^{i \theta_N} \}
$$
where the variables $\gamma_k,\theta_k$ are independent, the gamma variables having parameters $1,2, \dots, N$, and the angles being uniform on $[0,2\pi]$.
\end{theorem}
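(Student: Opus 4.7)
The plan is to leverage Kostlan's theorem (Theorem \ref{Kostlan?}) for the joint law of the radii, and to treat the angular part by exploiting character orthogonality on $\Z/M\Z$. Write $\la_k = r_k e^{i\th_k}$; after passing to polar coordinates the joint density reads
\begin{equation*}
p(r,\th) \;\propto\; \prod_{i<j}\bigl|r_i e^{i\th_i} - r_j e^{i\th_j}\bigr|^2 \prod_{k=1}^N r_k e^{-N r_k^2}.
\end{equation*}
Expanding the Vandermonde by the Leibniz formula $\prod_{i<j}(z_i-z_j)=\sum_{\sigma \in S_N} \mathrm{sgn}(\sigma)\prod_k z_k^{\sigma(k)-1}$ and taking the modulus squared, one finds
\begin{equation*}
\prod_{i<j}|z_i-z_j|^2 \;=\; \sum_{\sigma,\tau \in S_N} \mathrm{sgn}(\sigma\tau) \prod_{k=1}^N r_k^{\sigma(k)+\tau(k)-2}\, e^{i\th_k(\sigma(k)-\tau(k))},
\end{equation*}
a trigonometric polynomial in the $\th_k$ whose angular frequencies $\sigma(k)-\tau(k)$ all lie in $\{-(N-1),\ldots,N-1\}$.

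To access the images under $z\mapsto z^M$, set $\phi_k := M\th_k \bmod 2\pi$. The joint density of $(r_k,\phi_k)_k$ is recovered from that of $(r_k,\th_k)_k$ by summing over the $M^N$ preimages of $\th\mapsto M\th\bmod 2\pi$, with Jacobian factor $M^{-N}$:
\begin{equation*}
p(r,\phi) \;=\; \frac{1}{M^N}\sum_{n \in \{0,\ldots,M-1\}^N} p\!\left(r,\; \frac{\phi + 2\pi n}{M}\right).
\end{equation*}
Substituting the Leibniz expansion, each factor $e^{i\th_k(\sigma(k)-\tau(k))}$ averages against the additive characters of $\Z/M\Z$, producing
\begin{equation*}
\frac{1}{M}\sum_{n=0}^{M-1} e^{2\pi i n (\sigma(k)-\tau(k))/M} \;=\; \1\bigl\{\sigma(k) \equiv \tau(k) \pmod{M}\bigr\}.
\end{equation*}
Since $|\sigma(k)-\tau(k)|\leq N-1$, the assumption $M\geq N$ forces $\sigma(k)=\tau(k)$ for every $k$, i.e.\ $\sigma=\tau$, and only diagonal terms survive.

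What remains is
\begin{equation*}
p(r,\phi) \;\propto\; \sum_{\sigma \in S_N} \prod_{k=1}^N r_k^{2\sigma(k)-1} e^{-N r_k^2},
\end{equation*}
manifestly independent of $\phi$. Hence the $\phi_k$'s are i.i.d.\ uniform on $[0,2\pi)$ and jointly independent of the radii; the permanent-like radial factor reproduces Kostlan's gammas after the change of variable $y_k = N r_k^2$. Combining this with Theorem \ref{Kostlan?} and using $N^{M/2}|\la_k|^M = (N|\la_k|^2)^{M/2}$ gives the stated identity in distribution.

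The main difficulty I expect is not the orthogonality argument itself but the careful bookkeeping of normalizations: tracking the polar Jacobian, the $M^{-N}$ from the angular preimage count, and the permanent-to-Kostlan identification, so as to recover exactly the independent gamma variables with parameters $1,\ldots,N$. As a sanity check on the sharpness of the threshold $M\geq N$: whenever $M<N$ one can find nontrivial pairs $\sigma\neq\tau$ with every difference $\sigma(k)-\tau(k)$ divisible by $M$, so off-diagonal terms survive and genuine angular correlations persist, which is precisely why the high-power regime is needed.
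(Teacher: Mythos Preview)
Your argument is correct. You work directly at the level of the joint density: expand the squared Vandermonde via Leibniz as a double sum over permutations, push forward under $\theta_k\mapsto M\theta_k$ by summing over the $M$ angular preimages, and use character orthogonality on $\Z/M\Z$ together with $|\sigma(k)-\tau(k)|\le N-1<M$ to kill all off-diagonal terms. The surviving permanent is $\phi$-free and reproduces Kostlan's radii. This is a complete and self-contained density computation.

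The paper's proof is genuinely different in method, though it rests on the same inequality. Rather than manipulating the density, it computes the \emph{product statistics} $\E\prod_k g(\lambda_k^M)$ via Andr\'eief's identity (Corollary~\ref{ProStat}), obtaining a determinant $\det(f_{i,j})$ with $f_{i,j}=\frac{1}{(j-1)!}\int z^{i-1}\bar z^{j-1}g(z^M)\,\dd\mu(z)$. The observation that the relative degree of every monomial in $g(z^M)$ is a multiple of $M$, while $|i-j|<N\le M$, forces the matrix to be diagonal; the diagonal entries are then identified as $\E\,g(\gamma_j^{M/2}e^{i\theta_j})$. Equality of all product statistics, together with Lemma~\ref{sympol2} (product mixed polynomials span the mixed symmetric polynomials) and moment determinacy, yields the distributional identity.

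What each buys: your route is more elementary for this particular statement and avoids the moment-characterization apparatus entirely. The paper's route, however, is the one that generalizes: for intermediate $M<N$ the same Andr\'eief matrix is no longer diagonal but becomes $M$-striped, and its determinant factorizes into $M$ blocks (Lemma~\ref{Mstriped}), which is exactly the mechanism behind the Power-Ginibre decomposition (Theorem~\ref{PG2}). Your density-level argument does not extend so cleanly to that case, since off-diagonal $(\sigma,\tau)$ terms survive and the resulting structure is harder to read off from the permanent-type expansion.
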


Note that this is not an asymptotic, but an exact result.


In specific settings, two other results hinted that something unusual happens with quadratic repulsion that would concern more than the radii or the high powers only. The first of these results was stated for the eigenvalues of a Haar-distributed unitary matrix (known as the Circular Unitary Ensemble, or CUE). The joint density of these eigenvalues is proportional to
$$\prod_{1 \leq k<j \leq N} |e^{i \theta_k}-e^{i \theta_j}|^2 $$
with respect to the Lebesgue measure on the unit circle. Rains established in~\cite{Rains} a decomposition in $M$ independent CUE blocks, for any power of the eigenvalues of $\CUE(N)$. 

\begin{theorem}[Rains]
For any $M \geq 1$,
$$
\CUE(N)^M \stackrel{d}{=} \bigcup_{k=1}^N \CUE \left(\left\lceil \frac{N -k}{M} \right\rceil \right).
$$
where the Circular Unitary Ensembles in the right hand side are independent.
\end{theorem}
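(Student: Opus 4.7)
My approach is to compute the $n$-point correlation functions of the pushforward $\pi_M(\CUE(N))$ directly and identify them with those of an independent superposition of $M$ $\CUE$ blocks. Recall that $\CUE(N)$ is a determinantal point process on the unit circle with projection kernel $K_N(\theta,\theta') = \frac{1}{2\pi}\sum_{n=0}^{N-1}e^{in(\theta-\theta')}$ onto $V := \mathrm{span}\{e^{in\theta} : 0 \le n < N\}$. Writing $n = qM + r$ with $0 \le r < M$ and $0 \le q < N_r$, where $N_r := \lceil (N-r)/M \rceil$, produces the $L^2$-orthogonal decomposition $V = \bigoplus_{r=0}^{M-1} V_r$ with $V_r = \mathrm{span}\{e^{i(qM+r)\theta} : 0 \le q < N_r\}$; the sizes $(N_0,\dots,N_{M-1})$ partition $N$ into $M$ integers as equal as possible, matching the statement (up to the obvious reindexing).

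The map $\pi_M : \theta \mapsto M\theta \bmod 2\pi$ is $M$-to-$1$ and preserves Haar measure, so the pushforward correlation functions read
\[
\tilde\rho_n(\phi_1,\dots,\phi_n) = \frac{1}{M^n}\sum_{k_1,\dots,k_n=0}^{M-1}\det\!\left(K_N\!\left(\tfrac{\phi_i + 2\pi k_i}{M},\tfrac{\phi_j + 2\pi k_j}{M}\right)\right)_{i,j=1}^n.
\]
I would expand the determinant by Cauchy--Binet using the factorisation $K_N = A A^*$ with $A_{i,n} = e^{in\theta_i}/\sqrt{2\pi}$, then substitute $\theta_i = (\phi_i + 2\pi k_i)/M$: each matrix entry becomes $e^{in\phi_i/M}\, e^{2\pi i (n\bmod M) k_i/M}/\sqrt{2\pi}$, so the dependence on the lifting variable $k_i$ passes only through the residue $n \bmod M$. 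Performing the $\vec k$-sum first, character orthogonality $\sum_{k=0}^{M-1} e^{2\pi i(r-r')k/M} = M\delta_{r,r'}$ forces the pairs of permutations appearing in $|\det A_S|^2$ to share a residue pattern, so the expansion factorises across residue classes indexed by an ordered partition $(I_0,\dots,I_{M-1})$ of $\{1,\dots,n\}$.

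Inside each residue class $r$, the corresponding factor simplifies (unit-modulus phases cancel) to $|\det(e^{iq\phi_i})_{i \in I_r,\, q \in T_r}|^2$ for some $T_r \subset \{0,\dots,N_r - 1\}$. Re-summing over $T_r$ by Cauchy--Binet in reverse reconstructs $\det(\Pi_{W_r}(\phi_i,\phi_{i'}))_{i,i' \in I_r}$, where $\Pi_{W_r}$ is the orthogonal projection onto $W_r := \mathrm{span}\{e^{iq\phi} : 0 \le q < N_r\}$, i.e.\ the $\CUE(N_r)$ kernel. Summing over all ordered residue assignments gives
\[
\tilde\rho_n(\phi_1,\dots,\phi_n) = \sum_{\{1,\dots,n\} = \bigsqcup_r I_r}\prod_{r=0}^{M-1} \det\!\bigl(\Pi_{W_r}(\phi_i,\phi_{i'})\bigr)_{i,i' \in I_r},
\]
which is exactly the correlation structure of the independent superposition $\bigsqcup_{r=0}^{M-1}\CUE(N_r)$. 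The main obstacle is the combinatorial bookkeeping in the $\vec k$-step: one must track carefully how character orthogonality interacts with the Cauchy--Binet expansion over pairs of permutations so that only residue-preserving terms survive and recombine cleanly into the block products. Once this decoupling is in hand, the identification as an independent CUE superposition follows at once from the explicit form of the correlation functions.
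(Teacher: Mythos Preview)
Your argument is correct and complete in outline; the sign issue you flag in the $\vec k$-step does go through, because $\sigma$ and $\tau$ share the same block structure $I_r\to J_r$ and hence the interleaving signs cancel in $\mathrm{sgn}(\sigma)\,\mathrm{sgn}(\tau)$, leaving only the product of block signs. After that, the reverse Cauchy--Binet on each residue class reconstructs the $\CUE(N_r)$ projection kernel exactly as you say.

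This is, however, a genuinely different route from the paper's. The paper does not touch the $n$-point correlation functions at all: it invokes the general Power Decomposition, whose proof computes the \emph{product statistics} $\E\bigl(\prod_k g(e^{iM\theta_k})\bigr)$ via Andr\'eief's identity, observes that the resulting matrix is $M$-striped (entries vanish unless $i\equiv j\ [M]$), and factors its determinant into $M$ minors along the arithmetic progressions $I_k$. Each minor is then recognised as the product statistic of a root distribution; in the CUE case all radii are $1$, so these root distributions are simply smaller CUEs of sizes $c_k=\lceil (N-k)/M\rceil$. Equality of laws then follows from the lemma that product symmetric polynomials span all symmetric polynomials, together with moment determinacy.

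Your approach buys you freedom from the moment problem: correlation functions determine a simple point process unconditionally, so no appeal to determinacy or to the spanning lemma is needed. It is also closer in spirit to Rains' original argument. The paper's approach, by contrast, is measure-agnostic: the $M$-striped factorisation depends only on radial symmetry and $\beta=2$, not on the specific kernel, which is what lets the same proof handle Ginibre, truncated unitaries, the spherical ensemble, and general radial potentials simultaneously. Your Cauchy--Binet decoupling, as written, leans on the projection-kernel form of $K_N$ and would need reworking to cover those cases.
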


The second example involves the eigenvalues of Gaussian hermitian matrices (the Gaussian Unitary Ensemble, or GUE). This example is different in this,that it does not exhibit radial symmetry on $\C$. 
GUE eigenvalues are distributed on $\R$ with joint density proportional to :
$$\prod_{1 \leq i<j \leq N} |x_i-x_j|^2 e^{- N \sum x_i^2}.$$

Edelman and La Croix \cite{EdelmanLacroix} established a block decomposition, which holds for the squares of the GUE eigenvalues (that is, the singular values of a GUE matrix) and is made of two independent Laguerre blocks (see \cite{EdelmanLacroix} for a definition).
 
\begin{theorem} [Edelman-Lacroix]
$$
\mathrm{GUE}(N)^2 \stackrel{d}{=} \mathrm{LUE}(N,1) \cup \mathrm{LUE}(N,2),
$$
where $\rm{LUE}(N,k)$ stand for Laguerre ensembles with half-integer parameters. 
\end{theorem}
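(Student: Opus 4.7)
The strategy is to apply the same determinantal ``residue-class'' decomposition used for $\Gin(N)^{M}$, now in the parity (mod-$2$) setting. Write the $\mathrm{GUE}(N)$ eigenvalue process as the determinantal point process on $\R$ with orthogonal projection kernel
\begin{equation*}
K_{N}(x,y) \;=\; \sum_{k=0}^{N-1}\psi_{k}(x)\psi_{k}(y),
\end{equation*}
where $\psi_{k}$ is the $L^{2}$-normalised Hermite function of degree $k$ with weight $\mathrm{e}^{-Nx^{2}}$. Since $\psi_{2n}$ is even in $x$ and $\psi_{2n+1}$ is odd, split $K_{N} = K_{N}^{\mathrm{e}} + K_{N}^{\mathrm{o}}$, the two summands being the orthogonal projections onto the spans of the even- and odd-indexed Hermite functions. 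These are projections onto orthogonal subspaces of $L^{2}(\R)$, of ranks $\lceil N/2\rceil$ and $\lfloor N/2\rfloor$.

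Because the two subspaces are orthogonal, a Cauchy--Binet expansion of a principal minor of $K_{N}^{\mathrm{e}}+K_{N}^{\mathrm{o}}$ yields
\begin{equation*}
\mathrm{GUE}(N)\;\stackrel{d}{=}\;\mathrm{DPP}(K_{N}^{\mathrm{e}})\cup\mathrm{DPP}(K_{N}^{\mathrm{o}}),
\end{equation*}
as a union of two \emph{independent} determinantal processes on $\R$. I would then push each sub-process forward under $x\mapsto x^{2}$. The key observation is that $K_{N}^{\mathrm{e}}$ (resp.\ $K_{N}^{\mathrm{o}}$) is even (resp.\ odd) in each variable, so in the push-forward correlation function
\begin{equation*}
\rho_{k}^{\mathrm{push}}(y_{1},\dots,y_{k}) \;=\; \sum_{\epsilon\in\{\pm1\}^{k}} \det\bigl(K(\epsilon_{i}\sqrt{y_{i}},\epsilon_{j}\sqrt{y_{j}})\bigr)_{i,j}\,\prod_{l=1}^{k}\frac{1}{2\sqrt{y_{l}}}
\end{equation*}
the sign factors $\epsilon_{i}\epsilon_{j}$ either vanish trivially or square to $1$ and can be pulled out of the determinant, so all $2^{k}$ sign patterns contribute equally. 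Absorbing the Jacobian $(2\sqrt{y})^{-1}$ into the rows and columns of the remaining determinant yields a DPP on $\R_{+}$ with kernel $\tilde{K}(u,v) = K(\sqrt{u},\sqrt{v})\,(uv)^{-1/4}$. Using the Hermite--Laguerre identities
\begin{equation*}
H_{2n}(x) = (-1)^{n}2^{2n}n!\,L_{n}^{-1/2}(x^{2}), \qquad H_{2n+1}(x) = (-1)^{n}2^{2n+1}n!\,x\,L_{n}^{1/2}(x^{2}),
\end{equation*}
one identifies $\tilde{K}^{\mathrm{e}}$ and $\tilde{K}^{\mathrm{o}}$ as the Laguerre kernels with parameters $-1/2$ and $+1/2$, of sizes $\lceil N/2\rceil$ and $\lfloor N/2\rfloor$, which is the announced statement.

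The main obstacle is this push-forward step: for a generic DPP the image under a two-to-one map is not determinantal, and the identification succeeds here only because the parity of each sub-kernel is precisely what makes the sum over sign-preimages collapse to a single determinant. Once this observation is in place, the remaining bookkeeping (normalisations, the $N$-dependent scaling of the weights, and the matching of Laguerre parameters) is routine, and the same argument transparently extends to Hermitian ensembles whose potential is symmetric under $x\mapsto -x$, which is the promised generalisation of Edelman--La Croix.
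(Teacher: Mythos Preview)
Your argument has a genuine gap at the point where you claim that $L^{2}$-orthogonality of the even and odd Hermite subspaces, via ``a Cauchy--Binet expansion'', gives
\[
\mathrm{GUE}(N)\;\stackrel{d}{=}\;\mathrm{DPP}(K_{N}^{\mathrm{e}})\;\cup\;\mathrm{DPP}(K_{N}^{\mathrm{o}})
\]
as \emph{independent} point processes on $\R$. This is false. Orthogonality of the ranges of $K_{N}^{\mathrm{e}}$ and $K_{N}^{\mathrm{o}}$ as operators on $L^{2}(\R)$ does not make the finite matrices $\bigl(K_{N}^{\mathrm{e}}(x_{i},x_{j})\bigr)$ and $\bigl(K_{N}^{\mathrm{o}}(x_{i},x_{j})\bigr)$ commute or otherwise decouple; there is no Cauchy--Binet identity for $\det(A+B)$. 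Concretely, the two-point function of $\mathrm{GUE}(N)$ and that of the independent superposition differ by $-2K_{N}^{\mathrm{e}}(x,y)K_{N}^{\mathrm{o}}(x,y)$, which does not vanish on $\R$. Equivalently, writing the joint density as $\frac{1}{N!}\lvert\det(\psi_{i-1}(x_{j}))\rvert^{2}$ and splitting the columns by parity does \emph{not} factor the determinant, because all rows $x_{1},\dots,x_{N}$ are shared.

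What rescues the kernel approach is exactly the observation you make later: the cross term $K_{N}^{\mathrm{e}}(x,y)K_{N}^{\mathrm{o}}(x,y)$ is odd in each variable, so it is killed by the sum over sign-preimages under $x\mapsto x^{2}$. In other words, the decomposition into independent blocks holds \emph{only after} pushing forward, and the push-forward and the decoupling must be done in a single step. If you compute
\[
\sum_{\epsilon\in\{\pm1\}^{k}}\det\bigl(K_{N}^{\mathrm{e}}(\sqrt{y_{i}},\sqrt{y_{j}})+\epsilon_{i}\epsilon_{j}K_{N}^{\mathrm{o}}(\sqrt{y_{i}},\sqrt{y_{j}})\bigr),
\]
expand over permutations and subsets, and sum over $\epsilon$, only permutations that preserve the chosen subset survive, and you obtain precisely $2^{k}\sum_{S}\det\bigl((K_{N}^{\mathrm{e}})_{S}\bigr)\det\bigl((K_{N}^{\mathrm{o}})_{S^{c}}\bigr)$, which is the correlation function of the independent union on $\R_{+}$. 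From there your Hermite--Laguerre identification is fine. So the ingredients are right, but the order of operations in your plan is not, and the displayed claim on $\R$ should be removed.

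For comparison, the paper does not work with correlation kernels at all. It characterises the law of $\{\lambda_{1}^{2},\dots,\lambda_{N}^{2}\}$ through product statistics $\E\bigl[\prod_{k}g(\lambda_{k}^{2})\bigr]$, computed by Andr\'eief's identity as a single $N\times N$ determinant; the symmetry of the potential makes this matrix $2$-striped (entries with $i\not\equiv j\ [2]$ vanish), so it factors into two minors over the odd and even indices, which are then identified as the product statistics of the two Laguerre blocks. This bypasses the push-forward of correlation functions entirely and is what makes the extension to arbitrary symmetric potentials $V$ immediate.
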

The fact that such a result holds only for the squares is essentially due to the lack of symmetry.

\subsection{Results}

Our results are essentially a generalization of the above. Instead of relying on the underlying matrix ensemble, or actually decomposing the density, we rely on a characterization of the law by the statistics obtained with the so-called product symmetric polynomials, that is, expressions of the type
$$
\E \left( \prod_{i=1}^N P(\lambda_i, \overline{\lambda_i}) \right)
$$
for any polynomial $P$ in two variables. Such statistics can be exactly computed thanks to Andr\'eief's identity, and they characterize the point process (see the Appendix). We illustrate this in Subsection \ref{ProofTechnique} by giving a new proof of Kostlan's theorem and the independence of $M$th powers for $M \geq N$. \medskip

Our main new result follows this approach. This is the identity stated in the abstract, that we refer to as the Power-Ginibre decomposition, summarized in Figure 1.

\begin{theorem}[Power-Ginibre Decomposition]\label{PG} For fixed integers $M \leq N$, let us define the sets
$$
I_k = \{ i \in \llbracket 1,N \rrbracket \ | \ i \equiv k \ [M] \}, \qquad 1 \leq k \leq M. 
$$
The following equality in distribution holds, when $\{\lambda_1, \dots, \lambda_N\}$ is distributed according to $\Gin(N)$,
$$
\{\lambda_1^M, \dots, \lambda_N^M\} \stackrel{d}{=} \bigcup_{k=1}^M \Gin(N,M,k)
$$
where the independent Power-Ginibre distributions $\Gin(N,M,k)$ are indexed by the sets $I_k$, with joint densities
$$
\frac{1}{Z_{N,M,k}}
\prod_{\substack{i<j \\ i,j \in I_k} } |z_i - z_j|^2  \prod_{i \in I_{k}} |z_i|^{\frac{2(k-M)}{M}}  e^{- N |z_i|^{2/M} } \dd m (z_i).
$$
\end{theorem}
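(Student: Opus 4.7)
Following the strategy outlined in Subsection~\ref{ProofTechnique}, the plan is to establish equality in distribution by computing, for an arbitrary polynomial $P$ in two variables, the product statistic $\E\bigl(\prod_{i=1}^N P(\lambda_i^M, \overline{\lambda_i^M})\bigr)$ in two different ways and checking agreement. By the characterization of radially symmetric point processes via product symmetric polynomials (Appendix), matching these statistics for all $P$ implies equality in distribution of $\{\lambda_i^M\}$ with the proposed superposition $\bigcup_{k=1}^M \Gin(N,M,k)$.

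First, I would apply Andr\'eief's identity to the Ginibre density, which rewrites the statistic (up to an overall constant) as $\det(A)$, where
\[
A_{ij} \;=\; \int_\C z^{i-1}\,\bar z^{j-1}\, P(z^M,\bar z^M)\, e^{-N|z|^2}\, dm(z), \qquad 1 \leq i,j \leq N.
\]
Passing to polar coordinates, the angular part of $A_{ij}$ reduces to an integral of $e^{\mathrm{i}(i-j + M(c-d))\theta}$ over $[0,2\pi]$, which vanishes unless $i\equiv j\pmod{M}$. Reordering rows and columns by residue class modulo $M$ therefore makes $A$ block-diagonal, with one block $B^{(k)}$ of size $|I_k|$ for each $k \in \{1,\dots,M\}$.

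Next, for indices $i=k+Ma$ and $j=k+Mb$ in $I_k$, I would factor $z^{i-1}\bar z^{j-1} = |z|^{2(k-1)}(z^M)^a(\bar z^M)^b$ and change variables $w=z^M$. A direct computation using polar coordinates gives $dm(z) = \tfrac{1}{M}|w|^{2(1-M)/M}\,dm(w)$ and $|z|^{2(k-1)}=|w|^{2(k-1)/M}$, leading to
\[
A_{k+Ma,\,k+Mb} \;=\; \frac{1}{M} \int_\C w^a \bar w^b\, P(w,\bar w)\, |w|^{2(k-M)/M}\, e^{-N|w|^{2/M}}\, dm(w).
\]
This is exactly (up to the scalar $1/M$) the Andr\'eief matrix entry for the density of $\Gin(N,M,k)$ tested against $P$, with $a,b \in \{0,\dots,|I_k|-1\}$. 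By block-diagonality, $\det(A) = \prod_{k=1}^M \det(B^{(k)})$, and applying Andr\'eief in reverse to each block identifies this product with the joint statistic of the independent superposition $\bigcup_k \Gin(N,M,k)$. Since $P$ is arbitrary, the characterization lemma concludes the proof, while the case $P\equiv 1$ pins down the normalizing constants~$Z_{N,M,k}$.

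The main obstacle lies in the bookkeeping of the change-of-variables step: the $M$-to-$1$ Jacobian of $w=z^M$ and the reshuffling of monomial exponents must combine to produce exactly the weight $|w|^{2(k-M)/M}$ and the stretched Gaussian $e^{-N|w|^{2/M}}$ that appear in the stated Power-Ginibre density, without any leftover factors depending on $k$ that would spoil the identification. The conceptual heart of the argument is the angular orthogonality, which is what forces the block structure on $A$ and makes the decomposition into independent Power-Ginibre components transparent.
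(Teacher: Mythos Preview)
Your proposal is correct and follows essentially the same route as the paper: compute product statistics via Andr\'eief's identity (Corollary~\ref{ProStat}), observe that the resulting matrix is $M$-striped by angular orthogonality, factor the determinant block by block (Lemma~\ref{Mstriped}), identify each block with the product statistic of $\Gin(N,M,k)$, and conclude via Lemma~\ref{sympol2}. The only cosmetic difference is that the paper routes the identification of each block through the auxiliary Root distribution $\rm{R}(N,M,k)$ and Proposition~\ref{PG1}, whereas you perform the change of variables $w=z^M$ directly in the matrix entries; both lead to the same weight $|w|^{2(k-M)/M}e^{-N|w|^{2/M}}$, and your bookkeeping is correct.
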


\begin{figure}\label{fig:spirals}
\centering
\begin{tikzpicture}

 \fill [white,fill opacity=1] (6,0) -- plot [domain=0:pi,samples=100] ({6+1.7*sin(\x r)},{0.5*cos(\x r)-0.1*\x-1-0.4*pi}) -- (6,0) -- cycle;

  \draw [name path=spirale] [black,thick,-] plot [domain=0:pi,samples=100] ({6+1.7*sin(\x r)},{0.5*cos(\x r)-0.1*\x-1-0.4*pi});
 
  \fill [white,fill opacity=1] (6,0) -- plot [domain=0:2*pi,samples=100] ({6+1.7*sin(\x r)},{0.5*cos(\x r)-0.1*\x-1-0.2*pi}) -- (6,0) -- cycle;
 
 \draw [name path=spirale] [black,thick,-] plot [domain=0:2*pi,samples=100] ({6+1.7*sin(\x r)},{0.5*cos(\x r)-0.1*\x-1-0.2*pi});

 \fill [white,fill opacity=1] (6,0) -- plot [domain=0:2*pi,samples=100] ({6+1.7*sin(\x r)},{0.5*cos(\x r)-0.1*\x-1}) -- (6,0) -- cycle;

\draw [name path=spirale] [black,thick,-] plot [domain=0:2*pi,samples=100] ({6+1.7*sin(\x r)},{0.5*cos(\x r)-0.1*\x-1});

\draw [-] (6,-0.5) -- (6,-1.4);
\draw [-] (6,-1.79) -- (6,-2.1);
\draw [-] (6,-2.45) -- (6,-3.1);

\draw (6.7,-1.2) node[scale=0.9,rotate=0]{$\Gin(N)$};

\draw [->] (6,-3.5) -- (6,-4.1);

 \draw (7,-3.7) node[scale=0.9,rotate=0]{$\pi_M : z \mapsto z^M$};
\node[draw,ellipse] (S4)at(6,-4.6) {$ \quad \Gin(N)^M \quad $};
\end{tikzpicture}
\hspace{1cm}
\begin{tikzpicture}
\node[draw,ellipse] (S2)at(6,-1.6) {$ \ \Gin(N,M,2) \ $};
\node[draw,ellipse,fill,white] (S1)at(6,-0.9) {$ \Gin(N,M,1) $}; 
\node[draw,ellipse] (S1)at(6,-0.9) {$ \ \Gin(N,M,1) \ $};
\node[circle,fill,inner sep=0.3pt](d)at(6,-2.1){};
\node[circle,fill,inner sep=0.3pt](d)at(6,-2.2){};
\node[circle,fill,inner sep=0.3pt](d)at(6,-2.3){};
\node[draw,ellipse] (S3)at(6,-2.8) {$\Gin(N,M,M)$};
\draw [->] (6,-3.5) -- (6,-4.1);
\draw (7.2,-3.7) node[scale=0.9,rotate=0]{(Superposition)};
\node[draw,ellipse] (S4)at(6,-4.6) {$ \quad \Gin(N)^M \quad $};
\end{tikzpicture}

\caption{
In the first picture, every pair of eigenvalues experiences repulsion, but the images of two eigenvalues under $\pi_M$ may still be close because they come from different sheets of the covering map $\pi_M$. In the second picture, every sheet has quadratic repulsion within itself, but each sheet is independent from the others.}
\end{figure}
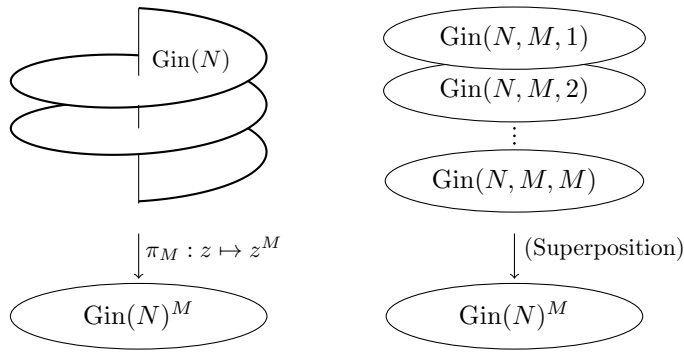

In Subsection \ref{Assymp}, we study the linear statistics of the Power-Ginibre distributions, using the determinantal structure it exhibits. The first order given by Theorem \ref{twist} involves the pushforward of the circular law by $\pi_M$, coherently with a classical result. The second order given by Theorem \ref{GFF2} is in accordance with the Gaussian Free Field limit established by Rider and Vir\'ag in \cite{RiderVirag}. \medskip

Section \ref{Generalization} goes beyond the Ginibre case to study ensembles with different supports and features. Indeed, our proof technique argument does not depend on the reference measure, but only relies on quadratic repulsion, provided that the distributions involved exhibit radial symmetry, as stated in \ref{PVDec}. Such distributions include, for instance, products of independent Ginibre matrices, minors of the CUE, or the spherical ensemble. These examples are given in more details in Subsection \ref{PVDec}. \medskip

Our general Power Decomposition encompasses Rains' result for the CUE, as we state precisely in Subsection \ref{CUE}. A peculiarity of the CUE case is the fact that its characteristic polynomial on the circle is distributed as a product of independent terms, a property first proved in \cite{BHNY}. We give a new proof of this fact relying on the techniques we introduce in this paper. In this case, however, our approach does not allow us to extend the result any further. \medskip

Subsection \ref{GUE} focuses on the GUE. In this case, block decomposition does not hold for all powers, but only for $M=2$. This generalizes the result of Edelman and La Croix. We are also able to provide a direct proof of another fact they mention: as in the $\CUE$ case, the characteristic polynomial at a specific point ($z=0$ here) is distributed like a product of independent variables. \medskip

Subsection \ref{BetaDec} initiates a further generalization of our results to two-dimensional beta ensembles with radial symmetry, when $\beta$ is an even integer. In that case, independence needs be replaced by a form of conditional independence with an explicit discrete variable $I$ that appears as a random environment. We generalize our study of squared radii and high powers to this context. However, the lack of a suitable form of Andr\'eief's identity prevents us from studying intermediate powers. \medskip

\newpage
\subsection{Synoptic table} For the convenience of the reader, we provide a table summing up all results considered in our paper about three ensembles of random matrix theory : namely, the complex Ginibre ensemble, the Circular Unitary Ensemble, and the Gaussian Unitary Ensemble. The three relevant aspects mentioned are the behavior of the radii, the existence of a decomposition of all or some powers, and the distribution of the characteristic polynomial at specific points. 

\vspace{0.5cm}

{\small
\begin{tabular}{|l|c|c|c|}  
  \hline
   &  &  & \\
   & Squared Radii  & Powers &  \minibox{Characteristic \\ \ \ Polynomial } 
    \\
    &  &  &  \\
  \hline
 &  &  & \\
  & Independence & Block Independence & Independence at $z=0$ \\
 Ginibre & $\{ N |\lambda_k|^2 \}_{k=1}^N \stackrel{d}{=} \{ \gamma_k \} _{k=1}^N$ & $ \rm{Gin}(N)^M \stackrel{d}{=} \displaystyle{\bigcup_{k=1}^M} \rm{Gin} (N,M,k) $ & $ P_N(0) \stackrel{d}{=} e^{i \theta} \displaystyle{\prod_{k=1}^N} \sqrt{\gamma_k }$ \\
  & (Kostlan \cite{Kost}) & (P.-G. Decomposition) & \\
   &  &  & \\
 \hline
  &  &  & \\
  & (Trivial) Independence & Block Independence & Independence at $z=1$ \\
 $\CUE$ & $ |e^{i \theta}|^2=1$ & $
\CUE(N)^M \stackrel{d}{=} \displaystyle{\bigcup_{k=1}^N} \CUE \left(\left\lceil \frac{N -k}{M} \right\rceil \right)
$ &  $Z_N(1) \stackrel{d}{=}  \displaystyle{\prod_{k=1}^N} \left(1 + \sqrt{\beta_{1,k-1}} e^{i \theta_k}\right)$ \\
  &  & (Rains \cite{Rains})  &(Bourgade \& al. \cite{BHNY}) \\
   &  &  & \\
 \hline
  &  &  & \\
 & Block Independence & Block Independence for $M=2$  & Independence at $z=0$ \\
  $\mathrm{GUE}$& (Same as powers, for $M=2$) &  $\mathrm{GUE}(N)^2 \stackrel{d}{=} \displaystyle{\bigcup_{k=1}^2} \mathrm{LUE}(N,k) $ &  $|P_N(0)| \stackrel{d}{=} \displaystyle{\prod_{k=1}^N} \chi^2\left({2\left\lfloor \frac{k}{2} \right\rfloor +1}\right)$ \\
  &  & (Edelman-La Croix \cite{EdelmanLacroix}) & (Edelman-La Croix \cite{EdelmanLacroix}) \\
   &  &  & \\
  \hline
\end{tabular}
} \medskip

Our new result here is the Power-Ginibre decomposition, which fills the last gap in this global picture. The results about the characteristic polynomial of the CUE and GUE are not new, but we recover them by another method.

\subsection{Notations and conventions}

\noindent We denote $\dd m$  the Lebesgue measure on $\mathbb{C}$, $\dd m^N(\mathbf{z})$ the Lebesgue measure on $\mathbb{C}^N$, and the standard complex Gaussian distributions by
$$\dd \mu( z) = \frac{1}{\pi} e^{- |z|^2} \dd m(z),
\qquad
\dd \mu^{(N)}( \mathbf{z}) = \frac{N^N}{\pi^N} e^{- N \| \mathbf{z} \|^2} \dd m^N( \mathbf{z}).$$ 
Note that $\dd \mu^{(N)}$ differs from $\dd \mu^{\otimes N}$ in scaling. \medskip

The complex Ginibre ensemble $\Gin(N)$ is defined by (\ref{eqn:Gini}), with the appropriate $N^{-1/2}$ scaling. We will sometimes refer to the matrix $\sqrt{N} G$ and its eigenvalues as the {\it unscaled} Ginibre ensemble. \medskip

The joint density of the eigenvalues of $\Gin(N)$ is given by
\begin{equation}
\frac{1}{Z_N} \prod_{i<j} |z_i - z_j|^2 \dd \mu^{(N)}(z_1, \dots, z_N),
\end{equation}
where $Z_N=N^{-N(N-1)/2}\prod_{j=1}^N j!$, and it is known (see \cite{Ginibre}) that the limiting empirical spectral measure converges weakly to the circular law,
$$
\frac{1}{N}\sum \delta_{\lambda_i} \stackrel{d}{\rightarrow} \frac{1}{\pi}\mathds{1}_{ \{ |\lambda|<1 \} }\rd m(\lambda).
$$


The power map $\pi_M: z \mapsto z^M$ is a covering map of $\mathbb{C}^*$ with $M$ sheets. It conformally maps the slice $\angle_M := \{ 0 < \arg(z)< \frac{2\pi}{M} ,0< |z|<1 \}$ to $\D \backslash [0,1]$, and the simple change of variables $\omega=\pi_M(z)$ gives \begin{equation}\label{CoV}
\frac{1}{\pi} \int_{\D} g(z^M) \dd m(z) = \frac{M}{\pi} \int_{\angle_M} g(z^M) \dd m(z)
= \frac{1}{\pi M } \int_{\D} g(\omega) |\omega|^{2/M-2} \dd m(\omega),
\end{equation}
where the weight $\frac{1}{M} |\omega|^{2/M-2}$ corresponds to the concentration of the measure at the origin displayed on Figure~1. We will refer to the associated measure on the unit disk as the twisted (or $M$-twisted) circular distribution; it is the pushforward of the circular distribution by $\pi_M$.\medskip
\begin{figure}[!b]
\begin{center}
\includegraphics[width=\textwidth]{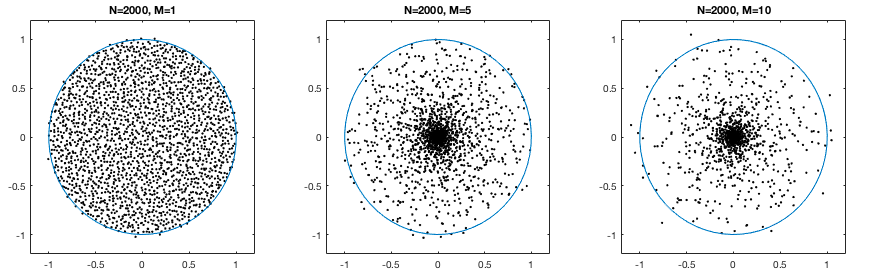}
\caption{MATLAB simulations of low powers of $\Gin(2000)$.}
\end{center}
\end{figure}

In Section \ref{PGDec} we will consider the partial sums of the exponential along an arithmetic progression. For any integers $ M \geq k \geq 1$ and $n$, we will denote
$$
e_{M,k}^{(n)}(x) = \sum_{j=0}^{n-1} \frac{x^{Mj+k-1}}{(Mj+k-1)!}, \qquad  e_{M,k} (x) = \sum_{j \geq 0} \frac{x^{Mj+k-1}}{(Mj+k-1)!}.
$$
It is clear that, if $\zeta$ is a primitive $M$th root of unity, for any $l \in [\![ 1,M ]\!]$,
$$
e^{\zeta^{l-1} x} = \sum_{k=1}^M \zeta^{(l-1)(k-1)} e_{M,k} (x),
$$
and one can reverse this identity through the Vandermonde matrix $\left(\zeta^{(i-1)(j-1)}\right)_{i,j=1}^M$, so that it reads
\begin{equation}\label{partialsums}
e_{M,k} (x) = \frac{1}{M} \sum_{l=1}^M {\overline{\zeta}}^{(k-1)(l-1)} e^{(\zeta^{l-1} x)}.
\end{equation}
Such identities will be used in Section \ref{PGDec}. \medskip

Unless otherwise specified, the capital letters $X,Y$ stand for random variables, whereas $T,S$ stand for polynomial indeterminates. Capital $Z$ can denote one or the other, depending on the context.

\section{The Complex Ginibre Ensemble under power maps}

The technique that will be used to prove the main theorem is first exemplified in Subsection \ref{ProofTechnique} in order to recover a few well-known results. We then proceed to prove Theorem \ref{PG2}, showing that every power map gives rise to some decomposition in independent blocks. Subsection \ref{Assymp} is devoted to a few asymptotic properties of these Power-Ginibre blocks, thus checking that their existence is coherent with two main features of the complex Ginibre ensemble : namely, the circular law, and the Gaussian Free Field.

\subsection{Distribution of radii and large powers}\label{ProofTechnique}
In this section we provide a new proof of some known results. The technique we use is essentially the same as the one that gives rise to new results in Sections \ref{PGDec} and \ref{PVDec}: characterization of the distribution of a set of complex variable through product statistics obtained with Andr\'eief's identity. One technical issue is to prove that such statistics indeed characterize the distribution. This classical technicality is dealt with in the Appendix.

\subsubsection{Andr\'eief's identity and product statistics.}
We following lemma is a key-fact in the study of determinantal processes.
\begin{lemma}[Andr\'eief]\label{Andr} Let $(E, \mathcal{E}, \nu)$ be a measure space. For any functions $(\phi_i, \psi_i)_{i=1}^N \in L^2(\nu)^{2N}$,
$$
\frac{1}{N!} \int_{E^N} \det \left(  \phi_i(\lambda_j) \right) \ \det  \left( \psi_i (\lambda_j) \right) \ \dd \nu^{\otimes N} ({\bf \lambda}) = \det \left(f_{i,j}\right), \quad \text{where} \ f_{i,j} = \int_E \phi_i(\lambda) \psi_j(\lambda) \dd \nu( \lambda).
$$
 \end{lemma}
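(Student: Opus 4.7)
The plan is to expand both determinants via the Leibniz formula and then factor the resulting $N$-dimensional integral as a product of one-dimensional integrals. Concretely, I first write
$$\det(\phi_i(\lambda_j))_{i,j} \, \det(\psi_i(\lambda_j))_{i,j} = \sum_{\sigma, \tau \in \mathfrak{S}_N} \mathrm{sgn}(\sigma)\,\mathrm{sgn}(\tau) \prod_{j=1}^N \phi_{\sigma(j)}(\lambda_j) \, \psi_{\tau(j)}(\lambda_j),$$
which is the crucial step, because each summand is now separable in the variables $\lambda_1, \ldots, \lambda_N$.

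Next I would invoke Fubini's theorem to interchange the finite double sum with the integral over $E^N$; the $L^2(\nu)$ hypothesis on the $\phi_i$ and $\psi_j$ ensures that each summand is absolutely integrable by Cauchy--Schwarz, and since $\mathfrak{S}_N \times \mathfrak{S}_N$ is finite this interchange is harmless. The integral of each factorised summand splits into the product of the one-dimensional integrals $f_{\sigma(j),\tau(j)}$, yielding
$$\frac{1}{N!} \sum_{\sigma, \tau \in \mathfrak{S}_N} \mathrm{sgn}(\sigma)\,\mathrm{sgn}(\tau) \prod_{j=1}^N f_{\sigma(j), \tau(j)}.$$

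The last step is a reindexing of the double sum: setting $\pi = \tau \circ \sigma^{-1}$ and changing indices $i = \sigma(j)$ in the product turns $\prod_j f_{\sigma(j),\tau(j)}$ into $\prod_i f_{i, \pi(i)}$, while $\mathrm{sgn}(\sigma)\,\mathrm{sgn}(\tau) = \mathrm{sgn}(\pi)$. The sum over $\sigma$ then contributes a factor $N!$ of identical terms, which cancels the prefactor, and the remaining sum over $\pi \in \mathfrak{S}_N$ is exactly the Leibniz expansion of $\det(f_{i,j})$. I do not anticipate any genuine obstacle here beyond careful bookkeeping of the permutation indices; the only analytic point that needs attention is the application of Fubini, but as noted this is immediate from the $L^2$ hypothesis.
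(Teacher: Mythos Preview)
Your proof is correct and is the standard argument for Andr\'eief's identity. The paper itself does not give a proof of this lemma; it only cites \cite{Andreiev,Deift1}, so there is no in-paper proof to compare against, but your expansion via Leibniz, Fubini, and the reindexing $\pi = \tau \circ \sigma^{-1}$ is precisely the classical derivation one finds in those references.
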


\noindent A proof can be found in \cite{Andreiev,Deift1}. For Ginibre, it yields the following explicit formula for the product statistics.

\begin{corollary}[Product Statistics]\label{ProStat} Let $E=\mathbb{C}$, $g \in L^2(\mu)$, and  $\{ \lambda_1, \dots, \lambda_N\}$ Ginibre eigenvalues. Then,
$$ \E \left( \prod_{k=1}^N g \left(\lambda_k \sqrt{N} \right) \right)= \det \left( f_{i,j} \right)_{i,j = 1}^{N} \quad \text{where} \ f_{i,j} = \frac{1}{(j-1)!} \int z^{i-1} \bar{z}^{j-1} g(z) \dd \mu ( z).$$ 
\end{corollary}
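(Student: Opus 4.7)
The plan is to reduce the expectation to a standard application of Andréief's identity (Lemma \ref{Andr}), exploiting the Vandermonde structure of the Ginibre density. Since $\lambda_k\sqrt{N}$ are the eigenvalues of the \emph{unscaled} Ginibre ensemble, I first rewrite the joint density of $(\lambda_1\sqrt{N},\dots,\lambda_N\sqrt{N})$ with respect to the product measure $\dd \mu^{\otimes N}$. A direct application of Andréief's identity to $\int \prod_{i<j}|z_i-z_j|^2 e^{-\sum|z_i|^2}\,\dd m^N$ with the choice $\phi_i(z)=z^{i-1}$, $\psi_j(z)=\bar z^{j-1}$ computes the partition function in closed form, using the orthogonality $\int z^{i-1}\bar z^{j-1}\,\dd\mu(z)=(j-1)!\,\delta_{ij}$. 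This yields the exchangeable density
\[
\frac{1}{N!\prod_{j=1}^{N}(j-1)!}\;\prod_{i<j}|z_i-z_j|^{2}
\quad\text{with respect to }\dd\mu^{\otimes N}.
\]

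Next I would factor the squared Vandermonde as a product of two determinants,
$\prod_{i<j}|z_i-z_j|^2=\det(z_j^{i-1})\det(\overline{z_j}^{\,i-1})$,
and absorb the product $\prod_k g(z_k)$ into one of them: multilinearity in the columns gives $\det(\overline{z_j}^{\,i-1})\prod_k g(z_k)=\det\bigl(\overline{z_j}^{\,i-1}g(z_j)\bigr)$. The integrand is then precisely the product of two determinants to which Lemma \ref{Andr} applies, with $\phi_i(z)=z^{i-1}$ and $\psi_j(z)=\bar z^{j-1}g(z)$.

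Applying Andréief produces
\[
\mathbb{E}\Bigl(\prod_{k=1}^{N} g(\lambda_k\sqrt{N})\Bigr)
=\frac{1}{\prod_{j=1}^{N}(j-1)!}\,
\det\!\left(\int_{\mathbb{C}}z^{i-1}\bar z^{j-1}g(z)\,\dd\mu(z)\right)_{i,j=1}^{N}.
\]
A final use of multilinearity pulls the factor $1/(j-1)!$ into the $j$-th column of the determinant, yielding exactly the claimed formula for $f_{i,j}$.

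There is no real obstacle; the only subtle point is bookkeeping of the normalization. One must consistently handle the three constants that appear, namely the $N!$ from Andréief, the factor $\pi^{-N}$ converting $\dd m$ to $\dd\mu$, and the product $\prod_j (j-1)!$ from the orthogonal polynomial norms, and verify that they collapse to give precisely $\prod_j (j-1)!^{-1}$. This is the same computation that underlies the orthogonal-polynomial kernel of the Ginibre process, so it amounts to observing that $\{z^{i-1}\}$ are (up to norms) the orthogonal polynomials associated with $\dd\mu$.
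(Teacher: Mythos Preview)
Your proposal is correct and follows essentially the same route as the paper: rewrite the unscaled Ginibre density as $\frac{1}{N!\prod_j (j-1)!}\,|\det(z_j^{i-1})|^2\,\dd\mu^{\otimes N}$, absorb $\prod_k g(z_k)$ into one Vandermonde factor, and apply Andr\'eief's identity. The only cosmetic difference is that the paper attaches $g$ to the holomorphic column ($\phi_i(z)=z^{i-1}g(z)$, $\psi_j(z)=\bar z^{j-1}$) rather than the antiholomorphic one, which is immaterial.
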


\begin{proof} The unscaled eigenvalues $\{ \lambda_1 \sqrt{N}, \dots, \lambda_N \sqrt{N}\}$ have joint density
$$
\frac{1}{\prod_{j=1}^N j!} \left| \det \left( z_i^{j-1} \right)_{i,j=1}^N \right|^2 \dd \mu^{\otimes N}(z_1, \dots, z_N).
$$
Therefore, using Lemma \ref{Andr} with $\phi_i(z) = z^{i-1} g(z)$, $\psi_j(z)= z^{j-1}$ and $\dd \nu = \dd \mu$ yields
$$
\E \left( \prod_{k=1}^N g \left(\lambda_k \sqrt{N} \right)  \right)= \frac{N!}{\prod_{k=1}^N k!} \det \left( \int z^{i-1} \bar{z}^{j-1} g(z) \dd \mu ( z) \right)_{i,j=1}^N = \det \left(f_{i,j} \right).
$$
as was claimed.
\end{proof}

\subsubsection{Kostlan's theorem.}
\noindent Corollary \ref{ProStat} implies Kostlan's theorem, provided the statistics of a set of real random variables are fully characterized by these product statistics. We define the product symmetric polynomials as the symmetric polynomials given by products of polynomials in one variable,
$$ \rm{PS}_{\C}(N) = \left\{ \ \prod_{i=1}^N P(T_i) \ | \ P \in \mathbb{C} [T] \ \right\}. $$ 
\begin{lemma}\label{sympol}
$\rm{PS}_{\C}(N)$ spans $\rm{S}_{\C}(N)$ as a vector space.
\end{lemma}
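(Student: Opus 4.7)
The plan is to exploit the multiplicative structure of $\mathrm{PS}_{\mathbb{C}}(N)$ rather than attempting a direct combinatorial decomposition. Observe first that $\mathrm{PS}_{\mathbb{C}}(N)$ is closed under pointwise multiplication, since for any $P,Q \in \mathbb{C}[T]$,
\[
\prod_{i=1}^N P(T_i) \cdot \prod_{i=1}^N Q(T_i) = \prod_{i=1}^N (PQ)(T_i),
\]
which again is a product over $i$ of a single polynomial evaluated at $T_i$. Consequently, the linear span $V := \mathrm{span}_{\mathbb{C}} \mathrm{PS}_{\mathbb{C}}(N)$ is a $\mathbb{C}$-subalgebra of $\mathrm{S}_{\mathbb{C}}(N)$. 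Note that $V$ is unital, as the constant polynomial $P=1$ gives $\prod_i P(T_i) = 1 \in V$.

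Next, I would show that each elementary symmetric polynomial $e_k(T_1,\dots,T_N)$ lies in $V$. Consider for a formal indeterminate $x$ the polynomial identity
\[
\prod_{i=1}^N (1 + x T_i) = \sum_{k=0}^N e_k(T_1,\dots,T_N)\, x^k.
\]
For any fixed value $x = x_\ell \in \mathbb{C}$, the left-hand side is an element of $\mathrm{PS}_{\mathbb{C}}(N)$. Choosing $N+1$ distinct values $x_0,\dots,x_N$, the matrix $(x_\ell^k)_{0 \leq \ell,k \leq N}$ is a Vandermonde matrix and is invertible, so each $e_k$ is expressible as a $\mathbb{C}$-linear combination of the $N+1$ elements $\prod_{i}(1 + x_\ell T_i)$. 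Hence $e_1,\dots,e_N \in V$.

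Finally, the fundamental theorem of symmetric polynomials states that $\mathrm{S}_{\mathbb{C}}(N) = \mathbb{C}[e_1,\dots,e_N]$ as a $\mathbb{C}$-algebra. Since $V$ is a $\mathbb{C}$-subalgebra of $\mathrm{S}_{\mathbb{C}}(N)$ containing each $e_k$, it contains the entire algebra they generate, so $V = \mathrm{S}_{\mathbb{C}}(N)$, which is the claim.

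There is no real obstacle: the essential insight is that while $\mathrm{PS}_{\mathbb{C}}(N)$ is not itself a vector space (it is only multiplicatively closed), its linear span inherits both the multiplicative and linear structures, and thus reduces to checking that a set of algebra generators lies inside it. The Vandermonde argument is a tidy way to avoid an explicit combinatorial decomposition of monomial symmetric polynomials $m_\lambda$; an alternative proof based on such a decomposition is possible but more laborious, and would become the main technical burden of the argument.
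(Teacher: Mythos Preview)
Your proof is correct and takes a genuinely different route from the paper. The paper argues directly that each monomial symmetric polynomial $m_{(a_1,\dots,a_N)}$ lies in the span: given the distinct exponents $b_1,\dots,b_k$ occurring in $(a_1,\dots,a_N)$, it forms $Q_t=\prod_i\bigl(\sum_j t^{M^j}T_i^{b_j}\bigr)$ with $M>N$, expands as $\sum_{\alpha\vdash N} t^{\sum_j\alpha_jM^j}\,m_{b_\alpha}$, notes that the base-$M$ exponent $\sum_j\alpha_jM^j$ determines $\alpha$, and then inverts a Vandermonde system in $t$ to isolate each $m_{b_\alpha}$. Your argument instead observes that $\mathrm{PS}_{\mathbb{C}}(N)$ is multiplicatively closed, so its linear span is a subalgebra; you then place the elementary symmetric polynomials $e_k$ in the span by a (simpler) Vandermonde inversion applied to $\prod_i(1+xT_i)$, and conclude via the fundamental theorem of symmetric polynomials.

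What each approach buys: yours is shorter and more conceptual, trading the explicit combinatorics for a standard structural result about $\mathrm{S}_{\mathbb{C}}(N)$. The paper's approach is more self-contained and, importantly, ports verbatim to Lemma~\ref{sympol2} (the mixed case $\mathrm{MS}_{\mathbb{C}}(N)$), where the relevant ring is the diagonal-invariant ring $\mathbb{C}[T_1,S_1,\dots,T_N,S_N]^{\mathfrak{S}_N}$. Your method would still work there---the span is again a subalgebra, and the multisymmetric power sums $p_{a,b}=\sum_i T_i^aS_i^b$ can be extracted from $\prod_i(1+xT_i^aS_i^b)$ by the same Vandermonde trick---but closing the argument then requires the (true, but less elementary) fact that the $p_{a,b}$ generate the multisymmetric algebra over $\mathbb{C}$. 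The paper's direct combinatorial argument sidesteps that dependency.
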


The proof can be found in the Appendix. We now give a proof of Kostlan's theorem, that we restate for the convenience of the reader :

\begin{theorem}[Kostlan]\label{Kostlan}
$
\{ N |\lambda_1|^2, \dots, N |\lambda_N|^2 \}
\stackrel{d}{=}
\{ \gamma_1, \dots, \gamma_N \},
$
where the gamma variables are independent, with parameters $1,2, \dots, N$.
\end{theorem}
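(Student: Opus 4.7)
The plan is to use Corollary \ref{ProStat} with a test function that depends only on $|z|^2$, and show the resulting determinant becomes diagonal with entries that are exactly the Gamma moments. Combined with the characterization of the joint law of the squared moduli by such product statistics (the real analog of Lemma \ref{sympol} sketched in the Appendix), this gives the claim.

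Concretely, I would let $h$ be an arbitrary (say, polynomial or bounded continuous) function on $[0,\infty)$ and set $g(z) = h(|z|^2)$. Applying Corollary \ref{ProStat} yields
\[
\E \prod_{k=1}^N h\bigl(N|\lambda_k|^2\bigr) \;=\; \det\left(f_{i,j}\right)_{i,j=1}^N, \qquad f_{i,j} = \frac{1}{(j-1)!}\int_{\C} z^{i-1}\bar z^{j-1} h(|z|^2)\,\dd\mu(z).
\]
Passing to polar coordinates $z=re^{i\theta}$, the angular integral of $e^{i(i-j)\theta}$ vanishes unless $i=j$, so the matrix $(f_{i,j})$ is diagonal. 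After the substitution $u = r^2$, the diagonal entries become
\[
f_{i,i} = \frac{1}{(i-1)!}\int_0^\infty u^{i-1} h(u) e^{-u}\,\dd u = \E\, h(\gamma_i),
\]
where $\gamma_i$ is Gamma-distributed with parameter $i$. Since the $\gamma_i$ are independent,
\[
\det(f_{i,j}) = \prod_{i=1}^N \E\,h(\gamma_i) = \E \prod_{i=1}^N h(\gamma_i),
\]
so the two sides of the claimed identity in distribution produce the same product statistics.

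The final step is to upgrade this equality of product statistics to an equality of unordered tuples. This is where Lemma \ref{sympol} (or rather, its real-variable variant proved in the Appendix) is invoked: product symmetric polynomials in the variables $|\lambda_k|^2$ span all symmetric polynomials in those variables, and via a standard density argument they determine the law of the symmetric random vector $\{N|\lambda_k|^2\}_{k=1}^N$. I expect this characterization step to be the main technical subtlety, since the calculation itself is entirely mechanical once radial symmetry is exploited; however the work has essentially already been outsourced to the Appendix, so at this point the proof is complete.
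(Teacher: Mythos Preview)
Your proposal is correct and follows essentially the same route as the paper: apply Corollary \ref{ProStat} to a radial test function $g(z)=h(|z|^2)$, observe that the matrix becomes diagonal with entries $\E\,h(\gamma_i)$, and conclude via Lemma \ref{sympol} together with the fact that the distributions involved are determined by their moments. The paper's proof is virtually identical, differing only in that it takes $h$ polynomial from the outset and writes the change of variables more tersely.
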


\begin{proof} Let $g \in \mathbb{C}[X]$ and apply Corollary \ref{ProStat} to the radially symmetric function $g(|\cdot|^2)$. The matrix is then diagonal, with coefficients
$$
f_{i,i} = \frac{1}{(i-1)!} \int |z|^{2i-2} g(|z|^2) \dd \mu ( z) = \frac{1}{(i-1)!} \int_{r=0}^{\infty} r^{i-1} g(r) e^{-r} \dd r = \E \left( g(\gamma_i) \right).
$$
That is to say,
$$
\E \left( \prod_{i=1}^N g(N |\la_i|^2) \right) =  \E \left(  \prod_{i=1}^N g(\gamma_{i}) \right). 
$$
These statistics characterize the distribution of a set of points, as such expressions with polynomial $g$ generate all symmetric polynomials (see Lemma \ref{sympol}), and the distributions involved are characterized by their moments. We conclude that $ \{ N |\lambda_1|^2, \dots, N |\lambda_N|^2 \} \stackrel{d}{=} \{ \gamma_1, \dots, \gamma_N\}$.
\end{proof}

\subsubsection{Independence of high powers.}

\noindent An equivalent characterization of distributions holds for sets of random complex variables, with mixed symmetric polynomial in $Z,\overline{Z}$. In this section, $Z$ stands for a polynomial indeterminate and not a specific random variable. We write $Z=T+iS$ and $\overline{Z}=T-iS$. The product symmetric mixed polynomials are the symmetric polynomials given by products of mixed polynomials, that is, polynomials in one variable and its complex conjugate:
$$
\rm{PMS}_{\C}(N) = \left\{ \ \prod_{j=1}^N P(Z_j, \overline{Z}_j) \ | \ P \in \mathbb{C} [T,S] \ \right\}
$$ 
Even though the notation $P(Z, \overline{Z})$ is redundant, we use it to make it clear that we are dealing with a mixed polynomial. We now extend Lemma \ref{sympol} to $\rm{PMS}_{\C}(N)$. This will allow us to characterize the distribution of a set of complex variables by examining its product statistics.

\begin{lemma}\label{sympol2}
$\rm{PMS}_{\C}(N)$ spans $\rm{MS}_{\C}(N)$ as a vector space.
\end{lemma}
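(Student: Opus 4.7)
The plan is to mirror the proof of Lemma \ref{sympol}, replacing single-variable polynomials in $T$ by bivariate polynomials in $(T,S)$. A natural linear basis of $\mathrm{MS}_{\C}(N)$ is furnished by the monomial mixed symmetric polynomials, that is, the $S_N$-orbit sums
$$
m_{(k,l)}(T,S) = \sum_{\sigma \in S_N} \prod_{i=1}^N T_{\sigma(i)}^{k_i}\, S_{\sigma(i)}^{l_i},
$$
indexed by finite sequences of integer pairs $((k_1,l_1),\dots,(k_N,l_N))$. I would therefore reduce the statement to showing that every such $m_{(k,l)}$ lies in the linear span of $\mathrm{PMS}_{\C}(N)$.

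The key tool is polarization. Given a finite family of monomials $Q_j(T,S) = T^{a_j}S^{b_j}$ for $1 \leq j \leq J$, I set $P(T,S) = \sum_{j=1}^J c_j Q_j(T,S)$ for free scalar parameters $c_j \in \C$. Expanding,
$$
\prod_{i=1}^N P(T_i, S_i) \;=\; \sum_{(j_1,\dots,j_N) \in \{1,\dots,J\}^N} c_{j_1} \cdots c_{j_N} \prod_{i=1}^N Q_{j_i}(T_i, S_i),
$$
which, for any specialization of the $c_j$, belongs to $\mathrm{PMS}_{\C}(N)$. Since $\C$ is infinite, a standard iterated Vandermonde argument shows that every coefficient of a monomial $c^\alpha$ (with $|\alpha| = N$) in the expansion above must itself lie in the linear span of $\mathrm{PMS}_{\C}(N)$. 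Grouping tuples $(j_1,\dots,j_N)$ by their type $\alpha$, this coefficient is, up to a nonzero multinomial factor, precisely the $S_N$-orbit sum of $\prod_{i} Q_{j^\ast_i}(T_i,S_i)$ for a fixed reference tuple of type $\alpha$; choosing the $Q_j$ as monomials $T^{a_j}S^{b_j}$ and letting $\alpha$ vary exhausts every $m_{(k,l)}$.

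The only step that requires a little care is the polarization itself, i.e. the passage from $\prod_i P(T_i,S_i)$ lying in the span for every $(c_1,\dots,c_J) \in \C^J$ to each coefficient in $c^\alpha$ lying in the span. This is a standard consequence of working over an infinite field and is identical in spirit to the analogous step already invoked in the proof of Lemma \ref{sympol}; I do not expect any conceptual obstacle beyond it. Everything else is routine bookkeeping of pairs $(T_i,S_i)$ in place of single variables $T_i$, so the argument should transcribe nearly verbatim from the one-variable case.
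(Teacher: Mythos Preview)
Your proposal is correct and follows essentially the same approach as the paper: reduce to the monomial mixed symmetric polynomials $m_{(k,l)}$, form a product $\prod_i P(T_i,S_i)$ with $P$ a linear combination of monomials, expand, and extract the individual $m_{(k,l)}$ via an invertibility argument. The only difference is in the implementation of the extraction step: you use a full multi-parameter polarization in $(c_1,\dots,c_J)$ and appeal to an iterated Vandermonde, whereas the paper specializes to a one-parameter family $c_j = t^{M^j}$ with $M>N$, so that distinct multi-indices $\alpha$ yield distinct exponents $\sum_j \alpha_j M^j$ (base-$M$ uniqueness) and a single Vandermonde in $t$ suffices. Both arguments are equally valid; the paper's is slightly more explicit, yours slightly more conceptual.
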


The proof can be found in the Appendix. We now prove the following result, that was announced in the introduction, restated here for the convenience of the reader.

\begin{theorem}[Hough, Krishnapur, Peres, Vir\'ag]\label{HighPowers2} For any integer $M \geq N$, the following equality in distribution holds:
$$
\{ N^{M/2} \lambda_1^M, \dots, N^{M/2} \lambda_N^M \} 
\stackrel{d}{=}
\{\gamma_1^{M/2} e^{i \theta_1}, \dots, \gamma_N^{M/2} e^{i \theta_N} \}
$$
where the variables $\gamma_k,\theta_k$ are independent, the gamma variables having parameters $1,2, \dots, N$, and the angles being uniform on $[0,2\pi]$.
\end{theorem}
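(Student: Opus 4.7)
The plan is to imitate the technique just used for Kostlan's theorem, applied this time to mixed polynomial test functions in $(z,\bar z)$ rather than purely radial ones. I would take an arbitrary $h \in \C[T,S]$ and apply Corollary \ref{ProStat} to the function $g(z) = h(z^M,\bar z^M)$. Since $N^{M/2}\lambda_k^M = (\sqrt{N}\lambda_k)^M$, this yields
$$
\E\!\left[\prod_{k=1}^N h\!\left(N^{M/2}\lambda_k^M,\, N^{M/2}\overline{\lambda_k^M}\right)\right] = \det(f_{i,j})_{i,j=1}^N,
\qquad
f_{i,j} = \frac{1}{(j-1)!}\int z^{i-1}\bar z^{j-1}\,h(z^M,\bar z^M)\,\dd\mu(z).
$$

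The key observation is that $h(z^M,\bar z^M)$ is a sum of monomials of the form $z^{Ma}\bar z^{Mb}$, so in polar coordinates the angular integral in $f_{i,j}$ vanishes unless the total power of $z$ matches that of $\bar z$, i.e.\ $(i-1)+Ma = (j-1)+Mb$, equivalently $j - i \equiv 0 \pmod{M}$. The hypothesis $M \geq N$ gives $|j-i| \leq N-1 < M$, so the only solution is $i=j$: the matrix $(f_{i,j})$ is \emph{diagonal}. To evaluate $f_{i,i}$, I would perform the change of variable $\rho = r^2$ and invoke the fact that $M\theta \bmod 2\pi$ is uniform on $[0,2\pi]$ when $\theta$ is. A short computation then identifies
$$
f_{i,i} = \E\!\left[h\!\left(\gamma_i^{M/2} e^{i\theta_i},\, \gamma_i^{M/2} e^{-i\theta_i}\right)\right],
$$
where $\gamma_i$ is gamma-distributed with parameter $i$ and $\theta_i$ is uniform on $[0,2\pi]$, the two being independent. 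The determinant therefore factorizes as a product of one-variable expectations, which is exactly the joint moment of the independent complex variables $\gamma_i^{M/2} e^{i\theta_i}$ appearing on the right-hand side.

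Since product mixed symmetric polynomials span all mixed symmetric polynomials (Lemma \ref{sympol2}) and the moment problem is determinate for the distributions at hand (Appendix), the matching of all such statistics forces the claimed equality in distribution. The main point of difficulty to emphasize is the hypothesis $M \geq N$: it is precisely what makes the Andr\'eief matrix diagonal and therefore produces full independence of the $N$ pushed-forward eigenvalues. For smaller $M$, off-diagonal entries survive between indices lying in the same residue class modulo $M$, and the distribution instead decomposes into $M$ independent blocks rather than $N$ independent singletons---the content of the forthcoming Theorem \ref{PG}.
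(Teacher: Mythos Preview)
Your proposal is correct and follows essentially the same approach as the paper: apply Corollary \ref{ProStat} to $g(z)=h(z^M,\bar z^M)$, use the angular integration (equivalently, the paper's ``relative degree'' argument) together with $M\geq N$ to conclude the matrix is diagonal, identify the diagonal entries as the expectations $\E[h(\gamma_i^{M/2}e^{i\theta_i},\gamma_i^{M/2}e^{-i\theta_i})]$, and finish via Lemma \ref{sympol2} and moment determinacy. Your closing remark about the case $M<N$ is a nice anticipation of Theorem \ref{PG}.
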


\begin{proof} Let $g \in \mathbb{C}[X,\overline{X}]$ and apply Corollary \ref{ProStat} to the polynomial $g(X^M)$. If we call relative degree of a monomial the difference between its degrees in the first and second variable,
$$\mathrm{reldeg} = \deg_X - \deg_{\overline{X}},$$
the monomials of relative degree $0$ are the powers of $|X|^2$, and the relative degree of a product is the sum of the relative degrees. Lemma \ref{ProStat} gives
$$ \E \left( \prod_{k=1}^N g ( \lambda_k^M N^{M/2} )  \right)= \det \left( f_{i,j} \right)_{i,j = 1}^{N} \quad \text{where} \ f_{i,j} = \frac{1}{(j-1)!} \int z^{i-1} \bar{z}^{j-1} g(z^M) \dd \mu ( z).$$ 
The relative degrees of the monomials of $g(X^M)$ are multiples of $M \geq N$, but on the other hand
$$|\mathrm{reldeg}(X^{i-1} \overline{X}^{j-1})| = | i-j | <N $$
Expanding the expression $z^{i-1} \bar{z}^{j-1} g(z^M)$ as a sum of monomials, it is clear that only the monomials with relative degree $0$ contribute, and these can be achieved only for $i=j$. The matrix is therefore diagonal, with
\begin{align*}
f_{j,j} & = \frac{1}{(j-1)!} \int |z|^{2j-2} g(z^M) \dd \mu ( z) \\
&= \frac{1}{2 \pi (j-1)!} \int_{\theta=0}^{2 \pi} \int_{r=0}^{\infty} r^{j-1} g(r^{M/2} e^{i \theta_j}) e^{-r} \dd r \dd \theta_j \\
& = \E \left( g(\gamma_j^{M/2} e^{i \theta_j})) \right).
\end{align*}
That is to say,
$$
\E \left( \prod_{j=1}^N g( \la_j^M N^{M/2}) \right) = \prod_{j=1}^N \E \left(  g(\gamma_{j}^{M/2} e^{i \theta_j}) \right) =  \E \left(  \prod_{j=1}^N g(\gamma_{j}^{M/2} e^{i \theta_j}) \right). 
$$
These statistics characterize the distribution of a set of points, as such expressions with polynomial $g$ generate all symmetric polynomials (see Lemma \ref{sympol2}), and the distributions involved are characterized by their moments. The result follows.
\end{proof}

\subsection{Power-Ginibre decomposition}\label{PGDec}

We state and prove here our main result, Theorem \ref{PG} : decomposition of the images of the complex Ginibre Ensemble under a power map as independent blocks, this for any power $M$. This relies on the techniques introduced above, and requires first a few elementary definitions.

\subsubsection{Arithmetic progressions and determinants of striped matrices.}
We consider the total number of Ginibre points $N$, an integer $M \in \N$, and write
$$N=qM+r, \qquad 0 \leq r < M$$
the Euclidean division of $N$ by $M$. By arithmetic progressions we mean the intersections of $\llbracket 1,N \rrbracket$ with infinite arithmetic progressions of step $M$.

\begin{remark}
The set $\llbracket 1,N \rrbracket$ is partitioned by $r$ arithmetic progressions of length $q+1$ and $M-r$ arithmetic progressions of length $q$. These are given by the sets
\begin{equation}\label{Progression}
I_{N,M,k}= \{ i \in \llbracket 1,N \rrbracket \ | \ i \equiv k \ [M] \}, \qquad 1 \leq k \leq M,
\end{equation}
whose cardinalities depend on whether $k \leq r$ or $k>r$.
\end{remark}
\noindent We will sometimes use the notation
$$ I_k=I_{N,M,k}, \quad c_k= |I_k|.$$

\begin{definition}
We say that a matrix $A \in \mathcal{M}_N(\mathbb{C})$ is $M$-striped if
$$ i-j \not \equiv 0 \ [M] \ \Rightarrow \ A_{i,j}=0.$$
\end{definition}

The determinant of an $M$-striped matrix can be factorized as the product of $M$ determinants, as shown below.

\begin{lemma}\label{Mstriped} The determinant of an $M$-striped matrix $A \in \mathcal{M}_N(\mathbb{C})$ is the product of its minors indexed by the arithmetic progressions $I_{N,M,k}$. That is, if $A_k=(A_{i,j})_{i,j \in I_k}$, then
$$
\det(A)= \prod_{k=1}^M \det(A_k).
$$
\end{lemma}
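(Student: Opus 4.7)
The plan is to reduce the lemma to the well-known fact that the determinant of a block-diagonal matrix is the product of the determinants of the diagonal blocks. The $M$-striped condition is precisely what makes the rows and columns of $A$ decouple, once they are sorted by residue class modulo $M$.

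Concretely, I would introduce the permutation $\sigma \in \mathfrak{S}_N$ that enumerates $\llbracket 1,N\rrbracket$ in the order
$$
I_{N,M,1}, \ I_{N,M,2}, \ \dots, \ I_{N,M,M},
$$
each set listed in increasing order, and let $P$ be the associated permutation matrix. Since $\det(P) = \pm 1$, the matrix $B = P A P^{T}$ has the same determinant as $A$. By construction, the first $c_1$ rows and columns of $B$ correspond to $I_{N,M,1}$, the next $c_2$ to $I_{N,M,2}$, etc. An off-block entry of $B$ arises from a pair $(i,j)$ with $i \in I_{N,M,k}$, $j \in I_{N,M,\ell}$, $k \neq \ell$; but then $i - j \not\equiv 0 \pmod{M}$, so $A_{i,j} = 0$ by the striped hypothesis. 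Hence $B$ is block-diagonal with blocks exactly $A_1,\dots,A_M$, and
$$
\det(A) = \det(B) = \prod_{k=1}^{M} \det(A_k).
$$

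An equivalent route, which avoids setting up the permutation matrix, is the Leibniz formula: a term $\mathrm{sgn}(\tau)\prod_{i} A_{i,\tau(i)}$ in $\det(A)$ is nonzero only when $i \equiv \tau(i)\pmod M$ for every $i$, i.e.\ when $\tau$ preserves each arithmetic progression $I_{N,M,k}$. Writing such a $\tau$ as a product $\tau_1 \cdots \tau_M$ of permutations of the disjoint sets $I_k$, the signature factorizes and the product splits across the $I_k$, yielding $\prod_k \det(A_k)$. There is no genuine obstacle here; the only thing to be a little careful about is bookkeeping the signs (they cancel in the conjugation approach and combine multiplicatively in the Leibniz approach), but in both presentations the statement is immediate.
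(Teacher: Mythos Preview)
Your proof is correct and follows essentially the same approach as the paper: conjugating $A$ by the permutation matrix that regroups indices according to $I_1,\dots,I_M$ to obtain a block-diagonal matrix. The paper states this in a single sentence, while you spell out the details and add the equivalent Leibniz-formula argument; both are fine.
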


\begin{proof} The $M$-striped matrix $A$ is equivalent to a block matrix, by conjugation with a permutation matrix that re-indexes $\llbracket1,N\rrbracket$ according to  $I_1, I_2, \dots, I_M$. \end{proof}

\subsubsection{The Power-Ginibre distributions.}
\noindent Even though the relevant determinantal process is the one we define below as $\Gin(N,M,k)$, for convenience in the proofs, we also define its preimage by the power map, $\rm{R}(N,M,k)$.
\begin{definition} For any triple $(N,M,k)$ 
we define the root distribution $\rm{R}(N,M,k)$ as the point process indexed by $I_{N,M,k}$ with joint density
$$
\frac{1}{Z_{\rm{R}(N,M,k)}}  \prod_{\substack{i<j \\ i,j \in I_k}} N^M |z_i^M - z_j^M|^2 \prod_{i \in I_{k}} N^{k} |z_i|^{2(k-1)} \dd \mu (z_i),
$$
where
$$
Z_{\rm{R}(N,M,k)} = c_k !  \prod_{j \in I_{k}} (j-1)!.
$$
The {Power-Ginibre} distribution $\Gin(N,M,k)$ is the image of the root distribution under the power map $\pi_M$. Equivalently, $\Gin(N,M,k)$ is the point process indexed by $I_{k}$ with joint density
$$ \frac{1}{Z_{N,M,k}}
\prod_{\substack{i<j \\ i,j \in I_k}} N^M |z_i - z_j|^2  \prod_{i \in I_{k}} N^{k} |z_i|^{\frac{2(k-M)}{M}}  e^{- N |z_i|^{2/M} } \dd m (z_i),
$$
where
$$
Z_{N,M,k} = \pi^{c_k} c_k! \ {M^{c_k}  \prod_{j \in I_{k}} (j-1)!}.
$$
\end{definition}

\noindent Note that $\Gin(N,1,1)$ is $\Gin(N)$. The following identity is essential in the proof of Theorem \ref{PG2}.

\begin{proposition}\label{PG1} For any $g \in L^2(\mu)$, and  $\{ \lambda_i \}_{I_{k}} \sim \Gin(N,M,k)$,
$$ \E \left( \prod_{i \in I_{k}} g \left( \lambda_i N^{M/2}\right)  \right)= \det \left(f_{i,j}\right)_{i,j \in I_{k}} \quad \text{where} \ f_{i,j} = \frac{1}{(j-1)!} \int z^{i-1} \bar{z}^{j-1} g \left( z^M \right) \dd \mu ( z).$$ 
\end{proposition}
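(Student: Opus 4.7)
The plan is to mimic the proof technique of Subsection~\ref{ProofTechnique}: reduce to an integral against the standard complex Gaussian reference measure $\dd\mu$, express the density as a product of two determinants (absorbing the integrand into one of them), and invoke Andr\'eief's identity (Lemma~\ref{Andr}).

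Concretely, the first step is to carry out the change of variables $\omega_i = N^{1/2}\lambda_i^{1/M}$ (equivalently, unscaling by $N^{M/2}$ and then taking $M$-th roots), under which the expectation becomes
\[
\E\Big[\prod_{i\in I_k} g(\omega_i^M)\Big]
\]
with $\omega_i$ distributed according to the root distribution $\rm{R}(N,M,k)$. The constants $N^M$ and $N^k$ in the Power-Ginibre density, together with the Jacobians of the unscaling and of the $M$-fold covering map, have been chosen so that every factor of $N$ cancels cleanly; what remains is an integral of $\prod g(\omega_i^M)$ against the weight $\prod_{a<b}|\omega_a^M-\omega_b^M|^2 \prod_a |\omega_a|^{2(k-1)}$ times $\dd\mu(\omega_i)^{\otimes c_k}$, divided by $c_k! \prod_{j\in I_k}(j-1)!$.

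The key algebraic identity is then
\[
\prod_{\substack{a<b \\ a,b\in I_k}} |\omega_a^M - \omega_b^M|^2 \, \prod_{a\in I_k} |\omega_a|^{2(k-1)}
\;=\; \Big|\det\bigl(\omega_a^{j-1}\bigr)_{a,\,j\in I_k}\Big|^2,
\]
obtained from the Vandermonde identity over the $c_k$ points $\omega_a^M$ together with absorption of the factor $\omega_a^{k-1}$ (respectively $\bar\omega_a^{k-1}$) into each row of the determinant (respectively its conjugate). The combinatorial observation that makes this work is that the exponents $M(\alpha-1)+(k-1)$ for $\alpha=1,\dots,c_k$ enumerate exactly the set $I_k-1$, so the resulting $c_k\times c_k$ determinant is naturally indexed by the arithmetic progression $I_k$. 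Pulling $\prod g(\omega_a^M)$ into one determinant by row multilinearity, Andr\'eief's identity applied with $\phi_j(\omega)=\omega^{j-1}g(\omega^M)$ and $\psi_j(\omega)=\bar\omega^{j-1}$ yields $c_k!\,\det(F_{i,j})_{i,j\in I_k}$ with $F_{i,j}=\int \omega^{i-1}\bar\omega^{j-1}g(\omega^M)\,\dd\mu(\omega)$; extracting a factor $1/(j-1)!$ from the $j$-th column absorbs the remaining $\prod(j-1)!$ in the normalisation and matches the claimed formula.

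The main obstacle is bookkeeping the various constants ($N$- and $M$-powers, factors of $\pi$, and factorials) appearing during the reduction to the $\omega$ variables, and verifying that they collapse correctly against $Z_{N,M,k}$ and against the $c_k!$ produced by Andr\'eief. The genuine insight, which is why this proposition is more than a mere rewrite of Corollary~\ref{ProStat}, is the exponent-alignment observation: the shift by $k-1$ coming from the radial weight $|\omega|^{2(k-1)}$ is exactly what bridges the Vandermonde exponents $M(\alpha-1)$ to the shifted progression $I_k-1$, producing a Gram-type matrix whose indexing set is the block $I_k$. This is what will later cooperate with the $M$-striped factorisation of Lemma~\ref{Mstriped} to drive the block-independence in the main Power-Ginibre decomposition.
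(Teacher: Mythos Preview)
Your proposal is correct and follows essentially the same approach as the paper's proof. The only cosmetic difference is that the paper avoids the phrasing ``change of variables $\omega_i = N^{1/2}\lambda_i^{1/M}$'' (which would require a choice of branch) by invoking directly the definitional fact that $\Gin(N,M,k)$ is the pushforward of $\rm{R}(N,M,k)$ under $\pi_M$, so that $N^{M/2}\lambda_i = (\sqrt{N}\,z_i)^M$ with $\{z_i\}_{I_k}\sim \rm{R}(N,M,k)$; from there the Vandermonde identity and Andr\'eief are applied exactly as you describe.
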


\begin{proof} By definition, $\{ N^{M/2} \lambda_i \}_{I_{k}}= \{ N^{M/2} z_i^M \}_{I_{k}}$ where $\{ z_i \}_{I_{k}} \sim \rm{R}(N,M,k)$. The points $\{ N^{M/2} z_i \}_{I_{k}} $ have joint density
$$
\frac{1}{c_k! \prod_{j \in I_{k}} (j-1)!} \prod_{\substack{i<j \\ i,j \in I_k }} |z_i^M - z_j^M|^2  \prod_{i \in I_{k}} |z_i|^{2(k-1)} \dd \mu (z_i),
$$
and, for indices in $\llbracket 1, q \rrbracket$ where $q=|I_k|$,
$$ \prod_{i=1}^q |z_i|^{2(k-1)} \prod_{1 \leq i<j \leq q} |z_i^M - z_j^M|^2 = \det\left({z_i^{M(j-1)+k-1}}\right)_{i,j=1}^q \det\left({\bar{z}_i^{M(j-1)+k-1}}\right)_{i,j=1}^q $$
where the exponents yield exactly the elements of $I_{k}$. Thus we can index by $I_k$, and use Lemma \ref{Andr} with $\phi_i(z) = z^{i-1} g(z)$, $\psi_j(z)= \bar{z}^{j-1}$. This yields
$$
\E \left( \prod_{i \in I_{k}} g (\lambda_i N^{M/2})  \right)  = \E \left( \prod_{i \in I_{k}} g (z_i^M N^{M/2})  \right)  = \frac{c_k!}{c_k! \prod_{j \in I_{k}} (j-1)!} \det \left( \int z^{i-1} \bar{z}^{j-1} g \left( z^M \right) \dd \mu ( z) \right)_{i,j \in I_{k}} 
$$
as was claimed.
\end{proof}

We can now state and prove our main result. The image of the Ginibre point process under any power map is the superposition of independent Power-Ginibre blocks. Below is a more detailed statement of the theorem than the one that was stated in the introduction.

\begin{theorem}[Power-Ginibre decomposition]\label{PG2} We have the equality in distribution
$$ \{\lambda_1^M, \dots, \lambda_N^M\} \stackrel{d}{=} \{ z_1^M, \dots, z_N^M\}$$
where the $(z_i)_{I_{k}}$ are distributed according to $\rm{R}(N,M,k)$ and independently for different values of $k$. In other words, the distribution of the $M$-th Powers of the Ginibre eigenvalues is a superposition of $M$ independent Power-Ginibre point processes:
$$ \rm{Gin}(N)^M \stackrel{d}{=} \bigcup_{k=1}^M \rm{R}(N,M,k)^M \stackrel{d}{=} \bigcup_{k=1}^M \rm{Gin}(N,M,k) .$$
\end{theorem}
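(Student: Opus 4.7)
The plan is to compute the product statistic $\E\bigl(\prod_{i=1}^N g(\lambda_i^M N^{M/2})\bigr)$ for arbitrary $g \in \mathbb{C}[Z,\overline{Z}]$ and show it factorizes as a product over $k \in \{1,\dots,M\}$, each factor being the product statistic of $\Gin(N,M,k)$. Since by Lemma \ref{sympol2} these statistics characterize the joint law of a finite set of complex points, this forces the claimed equality in distribution. The construction of the point processes $\Gin(N,M,k)$ and the identity in Proposition \ref{PG1} are tailored precisely so this factorization mechanism works.

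First I would apply Corollary \ref{ProStat} directly to the function $z \mapsto g(z^M)$, which gives
\[
\E\Bigl(\prod_{i=1}^N g(\lambda_i^M N^{M/2})\Bigr) = \det(f_{i,j})_{i,j=1}^N,\qquad f_{i,j} = \frac{1}{(j-1)!}\int z^{i-1}\bar z^{j-1} g(z^M)\,\dd\mu(z).
\]
The next, crucial step is the relative-degree observation used already in the proof of Theorem \ref{HighPowers2}: expanding $g(z^M)$ as a polynomial in $(z,\bar z)$, every monomial has relative degree a multiple of $M$, while the prefactor $z^{i-1}\bar z^{j-1}$ contributes relative degree $i-j$. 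Integrating against $\dd\mu$ in polar coordinates kills every monomial whose total relative degree is nonzero, so $f_{i,j}=0$ whenever $i\not\equiv j\pmod M$. In other words, the matrix $(f_{i,j})$ is $M$-striped in the sense of Lemma \ref{Mstriped}.

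Applying Lemma \ref{Mstriped} then factorizes the determinant as
\[
\det(f_{i,j})_{i,j=1}^N = \prod_{k=1}^M \det(f_{i,j})_{i,j\in I_k}.
\]
Each block determinant on the right is, by Proposition \ref{PG1}, exactly $\E\bigl(\prod_{i \in I_k} g(\mu_i N^{M/2})\bigr)$ where $\{\mu_i\}_{I_k}\sim \Gin(N,M,k)$. Thus
\[
\E\Bigl(\prod_{i=1}^N g(\lambda_i^M N^{M/2})\Bigr) = \prod_{k=1}^M \E\Bigl(\prod_{i\in I_k} g(\mu_i^{(k)} N^{M/2})\Bigr),
\]
which is precisely the product statistic of $\bigcup_{k=1}^M \Gin(N,M,k)$ with the blocks independent. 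Lemma \ref{sympol2} then promotes this equality of product statistics into equality in distribution, concluding the proof.

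The only genuinely non-routine step is the vanishing of off-stripe entries $f_{i,j}$; everything else is a recombination of tools (Andréief, the striped determinant lemma, and the characterization of joint laws by mixed product statistics) already established in the section. I do not anticipate a real obstacle: the decomposition is, in effect, built into the definition of $\Gin(N,M,k)$ so that the block determinants match the restricted minors appearing after Lemma \ref{Mstriped}, and the relative-degree calculation that creates the stripe structure is elementary.
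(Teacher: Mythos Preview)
Your proposal is correct and follows essentially the same route as the paper's proof: apply Corollary \ref{ProStat} to $g(z^M)$, use the relative-degree argument to see the resulting matrix is $M$-striped, factor via Lemma \ref{Mstriped}, identify each block via Proposition \ref{PG1}, and conclude by Lemma \ref{sympol2} together with moment determinacy. The only minor imprecision is that Lemma \ref{sympol2} alone does not characterize the law---you also need that the distributions involved are determined by their moments, which the paper states explicitly and which holds here since all variables have Gaussian-type tails.
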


\begin{proof} Let $P \in \mathbb{C}[X, \overline{X}]$ and use Corollary \ref{ProStat} with the function $g(z)=P(z^M, \bar{z}^M)$. The matrix is then $M$-striped, with coefficients
$$
f_{i,j} = \frac{1}{(j-1)!} \int z^{i-1} \bar{z}^{j-1} P(z^M, \bar{z}^M) \dd \mu ( z)
$$
when $i$ and $j$ belong to the same progression. We therefore use the factorization from Lemma \ref{Mstriped} and write
$$
\E \left( \prod_{i=1}^N g(N^{M/2} \lambda_i^M, N^{M/2} \bar{\lambda_i}^M) \right) = \det \left(f_{i,j}\right) = \prod_{k=1}^M \det \left(f_{i,j} \right)_{I_{k}}.
$$
These in turn are characterized by Proposition \ref{PG1} as the product statistics of Power-Ginibre,
$$
\E \left( \prod_{i=1}^N g( N^{M/2} \lambda_i^M, N^{M/2} \bar{\lambda_i}^M) \right) = \prod_{k=1}^M  \E \left( \prod_{i \in I_{k}} g( N^{M/2} z_i^M,  N^{M/2} \bar{z_i}^M) \right).
$$
These statistics characterize the distribution of a set of points, as such expressions with polynomial $g$ generate all mixed symmetric polynomials (see Lemma \ref{sympol2}), and the distributions involved are characterized by their moments. The result follows. \end{proof}

Power-Ginibre decomposition encompasses the previously mentioned results about Ginibre. For $M=1$ it yields the original Ginibre point process, and for $M \geq N$ it yields $N$ independent blocks. The joint law of the radii is also coherent with Kostlan's theorem, as can be computed directly from Proposition \ref{PG1}.

\begin{proposition}[Kostlan for Power-Ginibre] The set of Power-Ginibre squared radii $\{ N^M |\lambda_i|^2 \}_{I_{k}} $ is distributed as a set of independent gamma variables, with parameters matching the indices $I_{k}$, to the power $M$.
\end{proposition}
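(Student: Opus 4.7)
The plan is to apply Proposition \ref{PG1} directly to a radially symmetric test function, in the same spirit as the proof of Kostlan's theorem in Subsection \ref{ProofTechnique}. Specifically, given a polynomial $h \in \C[X]$, I would set $g(w) = h(|w|^2)$ in Proposition \ref{PG1}; then $g(z^M) = h(|z|^{2M})$ depends only on $|z|$, which will force the matrix of interest to become diagonal.

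More precisely, the coefficients $f_{i,j}$ from Proposition \ref{PG1} become
$$
f_{i,j} = \frac{1}{(j-1)!} \int z^{i-1}\bar{z}^{j-1} h(|z|^{2M}) \, \dd \mu(z),
$$
and switching to polar coordinates $z = r e^{i \theta}$ the integrand carries the factor $e^{i(i-j)\theta}$, which integrates to zero over $[0,2\pi]$ unless $i = j$. Hence the matrix $(f_{i,j})_{i,j \in I_k}$ is diagonal, and for the diagonal entries the substitution $u = r^2$ yields
$$
f_{i,i} = \frac{1}{(i-1)!} \int_0^\infty u^{i-1} h(u^M) e^{-u} \, \dd u = \E\bigl(h(\gamma_i^M)\bigr),
$$
where $\gamma_i$ has the gamma distribution with parameter $i$.

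Multiplying the diagonal entries and noting that $|\lambda_i N^{M/2}|^2 = N^M |\lambda_i|^2$, Proposition \ref{PG1} then gives
$$
\E\left(\prod_{i \in I_k} h(N^M |\lambda_i|^2)\right) = \prod_{i \in I_k} \E\bigl(h(\gamma_i^M)\bigr) = \E\left(\prod_{i \in I_k} h(\gamma_i^M)\right)
$$
where on the right the $\gamma_i$ are taken independent. Since these product statistics range over all of $\mathrm{PS}_{\C}(c_k)$ as $h$ varies, Lemma \ref{sympol} and the fact that the distributions involved are determined by their moments (the underlying variables being supported on $[0,\infty)$ with sub-exponential tails) conclude the proof. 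There is no real obstacle here, only the bookkeeping of constants in the change of variables; this is essentially a restatement of Kostlan's theorem with the Ginibre determinantal kernel replaced by the Power-Ginibre one, made possible by the fact that only the indices $i \in I_k$ contribute within each independent block.
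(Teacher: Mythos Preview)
Your proposal is correct and follows essentially the same approach as the paper: apply Proposition \ref{PG1} with a radially symmetric $g(w)=h(|w|^2)$, observe that the resulting matrix is diagonal with entries $\E(h(\gamma_i^M))$, and conclude via Lemma \ref{sympol} and the moment characterization. The only differences are cosmetic (you spell out the polar change of variables and the identity $|\lambda_i N^{M/2}|^2=N^M|\lambda_i|^2$, which the paper leaves implicit).
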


\begin{proof} Let $g \in \mathbb{C}[X]$ and apply Proposition \ref{PG1} with the radially symmetric function $g(|\cdot|^2)$. The matrix is then diagonal, with coefficients
$$
f_{i,i}  = \frac{1}{(i-1)!} \int |z|^{2i-2} g(|z|^{2M}) \dd \mu ( z) 
= \frac{1}{(i-1)!} \int_{r=0}^{\infty} r^{i-1} g(r^M) e^{-r} \dd r 
= \E \left( g(\gamma_{i}^M) \right)
$$
where $i$ is indexed by $I_{k}$. That is to say,
$$
\E \left( \prod_{i \in I_{k}} g( N^{M/2} |\lambda_i|^2) \right) = \prod_{i \in I_{k}} \E \left(  g(\gamma_{i}^M) \right) =  \E \left(  \prod_{i \in I_{k}} g(\gamma_{i}^M) \right). 
$$
These statistics characterize the distribution of a set of points, since such expressions with polynomial $g$ generate all symmetric polynomials by Lemma \ref{sympol}, and all distributions involved are characterized by their moments. This completes the proof. \end{proof}

\subsection{Asymptotic study of the Power-Ginibre Ensembles}\label{Assymp}

We have shown the relevance of the $\Gin(N,M,k)$ blocks in the analysis of the powers of $\Gin(N)$.  These smaller blocks themselves are determinantal and can be studied using standard techniques. In the following paragraphs, we explore the coherence of Theorem \ref{PG2} with the convergence to the Circular Law and the Gaussian Free Field. We also study the Power-Ginibre kernels at microscopic scales.

\subsubsection{Twisted circular law.}
Recall the form of the Ginibre Kernel,
$$
K_N (z,\omega) = \frac{N}{\pi} e^{-\frac{N}{2} (|z|^2 + |\omega|^2)} \sum_{k=1}^N \frac{(Nz\overline{\omega})^k}{k!}.
$$
We will make use of the following fact, the proof of which requires only the Central Limit Theorem and properties of Poisson distributions.
\begin{fact}[Poisson asymptotics]\label{Poissasym} For any $r>0$,
$$ e^{-Nr} \sum_{k=1}^N \frac{(Nr)^k}{k!} \xrightarrow[N \rightarrow \infty]{} f(r) =
\left\{
\begin{array}{ll}
1 & \text{if } r<1 \\
1/2 & \text{if } r=1 \\
0 & \text{if } r>1.
\end{array}
\right.
$$
\end{fact}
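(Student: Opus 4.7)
The plan is to recognize the sum as the cumulative distribution function of a Poisson random variable and then read off the three cases from standard concentration / central limit arguments.

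Concretely, let $X_N \sim \text{Poisson}(Nr)$. Then
$$
\P(X_N \leq N) = e^{-Nr}\sum_{k=0}^N \frac{(Nr)^k}{k!} = e^{-Nr} + e^{-Nr}\sum_{k=1}^N \frac{(Nr)^k}{k!},
$$
so the quantity in the statement equals $\P(X_N \leq N) - e^{-Nr}$. Since $e^{-Nr} \to 0$ for every $r>0$, it suffices to determine $\lim_N \P(X_N \leq N)$. I would next exploit the fact that $X_N$ can be written as a sum of $N$ independent $\text{Poisson}(r)$ variables $Y_1, \dots, Y_N$ (each of mean $r$ and variance $r$), which allows both Chebyshev and CLT inputs depending on the regime.

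For the two tail regimes, a one-line Chebyshev bound on the sum $X_N$ is enough. If $r<1$, then
$$
\P(X_N > N) \;\leq\; \P\bigl(|X_N - Nr| \geq N(1-r)\bigr) \;\leq\; \frac{\mathrm{Var}(X_N)}{N^2(1-r)^2} \;=\; \frac{r}{N(1-r)^2} \;\xrightarrow[N\to\infty]{}\; 0,
$$
so $\P(X_N \leq N) \to 1$. Symmetrically, if $r>1$, then $\P(X_N \leq N) \leq \frac{r}{N(r-1)^2} \to 0$. This handles $r \neq 1$ without any appeal to the CLT.

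For the boundary case $r=1$, I would invoke the Central Limit Theorem applied to $(Y_i)$: the normalized variable $\frac{X_N - N}{\sqrt{N}}$ converges in distribution to $\mathcal{N}(0,1)$, and $0$ is a continuity point of this limit. Thus
$$
\P(X_N \leq N) = \P\!\left(\frac{X_N - N}{\sqrt{N}} \leq 0\right) \xrightarrow[N\to\infty]{} \Phi(0) = \tfrac{1}{2}.
$$
No step should be seriously obstructive; the only mildly delicate point is the $r=1$ case, where one must be careful to apply the CLT at a continuity point rather than try to evaluate a diverging threshold. Combining the three cases and subtracting the vanishing correction $e^{-Nr}$ yields $f(r)$ as stated.
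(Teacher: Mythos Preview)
Your proof is correct and follows exactly the approach the paper indicates (it does not spell out a proof but only remarks that the fact ``requires only the Central Limit Theorem and properties of Poisson distributions''). You have supplied precisely those details: the identification with $\P(X_N\le N)$ for $X_N\sim\mathrm{Poisson}(Nr)$, a Chebyshev/LLN argument for $r\neq 1$, and the CLT at the continuity point $0$ for $r=1$.
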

This simple threshold property is one way to establish that the empirical measure of Complex Ginibre eigenvalues converges weakly to the circular law, the density of which is discontinuous along the unit circle. That is, for any continuous and bounded $f: \mathbb{C} \rightarrow \mathbb{R}$,
$$
\frac{1}{N} \sum_{i=1}^N f(\la_j) \xrightarrow[N \rightarrow \infty]{a.s.} \frac{1}{\pi} \int_{\D} f(\omega) \dd m(\omega).
$$
Furthermore, using (\ref{CoV}), for any such $f$, if $M$ is fixed and $N$ tends to infinity,
$$ \frac{1}{N} \sum_{j=1}^N f(z_j^M) N \xrightarrow[N \rightarrow \infty]{a.s.} \frac{1}{\pi} \int_{\D} f(z^M) \dd m(z) = \frac{1}{\pi M } \int_{\D} f(\omega) |\omega|^{2/M-2} \dd m(\omega) .$$
The singular weight $\frac{1}{ \pi M} |\omega|^{2/M-2}$ behaves like an approximation to the identity; it is the density on $\D$ we refer to as the $M$-twisted circular law. On the other hand, by the Power-Ginibre decomposition,
$$\frac{1}{N} \sum_{j=1}^N f \left( z_j^M \right) = \frac{1}{N} \sum_{k=1}^M \sum_{i \in I_k} f(\la_j)$$
where the second sum is over independent blocks. The fact that every Power-Ginibre block is determinantal enables us to compute its exact part in the final limit.

\begin{proposition}[Explicit kernel]
The measure $\Gin(N,M,k)$ is a determinantal point process indexed by $I_{k}$ with kernel
\begin{equation}\label{kernelPG}
K_{N,M,k}(z,w) = \Gamma(k) \sum_{l=0}^{c_k-1} \frac{(N^M z \overline{w})^l}{(Ml+k-1)!}
\end{equation}
with respect to the measure
$$
\dd \nu_{N,M,k} (z)= \frac{1}{Z_{\nu_{N,M,k}}} |z|^{2 \frac{k-M}{M}} e^{- {N} |z|^{2/M} } \dd m(z),
$$
where $$ Z_{\nu_{N,M,k}} = \pi N^{-k} M \Gamma(k).$$
\end{proposition}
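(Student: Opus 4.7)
The joint density of $\Gin(N,M,k)$ as defined above is of the classical Vandermonde-squared form
\[
\frac{1}{C}\prod_{\substack{i<j\\ i,j\in I_k}}|z_i-z_j|^2 \prod_{i\in I_k}\varphi(z_i)\,\dd m(z_i),
\qquad
\varphi(z) := |z|^{2(k-M)/M} e^{-N|z|^{2/M}},
\]
with $c_k$ points. The plan is to follow the standard orthogonal-polynomial route: writing $\prod_{i<j}|z_i-z_j|^2 = |\det(z_i^{l})_{i,l}|^2$, applying Andr\'eief's identity (Lemma \ref{Andr}) exactly as in Corollary \ref{ProStat}, and exploiting the radial symmetry of $\varphi$. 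That symmetry yields $\int z^n \overline{z}^m \varphi(z)\,\dd m(z)=0$ whenever $n\neq m$, so $\{1,z,\dots,z^{c_k-1}\}$ is already an orthogonal family in $L^2(\varphi\,\dd m)$ and the ensemble is determinantal with kernel, relative to $\varphi\,\dd m$,
\[
\widetilde{K}(z,w) = \sum_{l=0}^{c_k-1} \frac{z^l \overline{w}^l}{h_l},
\qquad
h_l := \int_{\mathbb{C}} |z|^{2l}\,\varphi(z)\,\dd m(z).
\]

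The only genuine computation is $h_l$. I would pass to polar coordinates and substitute $u=Nr^{2/M}$ (so $r=(u/N)^{M/2}$ and $\dd r = \tfrac{M}{2}\, N^{-M/2}\, u^{M/2-1}\,\dd u$), which turns the radial integral into a single Gamma integral and gives
\[
h_l = \pi M \, N^{-(Ml+k)} \, \Gamma(Ml+k).
\]
The case $l=0$ recovers $h_0 = \pi M\, N^{-k}\, \Gamma(k) = Z_{\nu_{N,M,k}}$, which confirms en passant that $\dd\nu_{N,M,k} = \varphi\,\dd m / Z_{\nu_{N,M,k}}$ is indeed a probability measure on $\mathbb{C}$.

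Finally, switching the reference measure from $\varphi\,\dd m$ to $\dd\nu_{N,M,k}$ multiplies the kernel by the constant $Z_{\nu_{N,M,k}}$, so
\[
K_{N,M,k}(z,w) = Z_{\nu_{N,M,k}}\, \widetilde{K}(z,w) = \Gamma(k) \sum_{l=0}^{c_k-1} \frac{(N^M z \overline{w})^l}{(Ml+k-1)!},
\]
after cancelling $\pi M$, collecting the powers of $N$ via $N^{-k}\cdot N^{Ml+k}=N^{Ml}$, and rewriting $\Gamma(Ml+k)=(Ml+k-1)!$. Conceptually there is no real obstacle in this proof; the only delicate point is bookkeeping the prefactors through the substitution in $h_l$ and through the change of reference measure, where it is easy to drop a factor of $\pi$, $M$, or a power of $N$.
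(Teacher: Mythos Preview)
Your proposal is correct and follows essentially the same route as the paper: the paper's proof simply invokes ``the usual methods for determinantal point processes'' (citing Mehta), notes that radial symmetry makes the monomials orthogonal, and records the single computation $\int_{\mathbb{C}} |z|^{2l}\,\dd\nu_{N,M,k} = \Gamma(Ml+k)/(N^{Ml}\Gamma(k))$, which is exactly your $h_l/Z_{\nu_{N,M,k}}$. Your write-up is in fact more detailed than the paper's, spelling out the substitution and the bookkeeping that the paper leaves implicit.
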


\begin{proof}
The usual methods for determinantal point processes apply (see \cite{Mehta}), and as the measure is radially symmetric, an orthonormal basis of polynomials is given by the monomials, scaled by their $L^2(\dd \nu_{N,M,k})$ norm. We compute and find
$$
\int_{\mathbb{C}} |z|^{2l} \dd \nu_{N,M,k} = \frac{\Gamma(Ml+k) }{N^{Ml} \Gamma(k)}.
$$
Hence, the kernel is given by formula (\ref{kernelPG}).
\end{proof}

We can now study the convergence of each block to the $M$-twisted circular law. Recall that $c_k=|I_k|$ is the number of points of $\Gin(N,M,k)$ (see (\ref{Progression})).

\begin{proposition}[Mean Twisted Circular Law]\label{meantwist}
The mean density of points of $\Gin(N,M,k)$ converges to the $M$-twisted circular law, that is for any fixed $M \geq k$ and any test function $f$,
$$ \frac{1}{c_k} \E \left( \sum_{i \in I_k} f(\la_j) \right) \xrightarrow[N \rightarrow \infty]{}   \frac{1}{\pi M } \int_{\D} f(\omega) |\omega|^{2/M-2} \dd m(\omega).$$
\end{proposition}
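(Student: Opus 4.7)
The plan is to express $\E \sum_{i \in I_k} f(\la_i)$ via the first intensity of the determinantal point process, to extract the singular weight $|\omega|^{2/M-2}$ by a change of variable, and finally to show that the remaining scalar factor converges pointwise to $\tfrac{1}{M} \1_{r<1}$ via a Poisson concentration argument in the spirit of Fact~\ref{Poissasym}.

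First, the standard formula for the first intensity of a determinantal process gives
$$
\E\!\left(\sum_{i \in I_k} f(\la_i)\right) = \int_{\C} f(z) \, K_{N,M,k}(z,z) \, \dd\nu_{N,M,k}(z).
$$
Inserting the explicit kernel \eqref{kernelPG} and the explicit weight of $\nu_{N,M,k}$, the Gamma factors cancel, and a direct computation in the spirit of the change of variable \eqref{CoV} rewrites this as
$$
\frac{N}{\pi M} \int_{\C} f(\omega) \, h_{N,M,k}\!\left(|\omega|^{2/M}\right) \, |\omega|^{2/M - 2} \, \dd m(\omega), \qquad h_{N,M,k}(r) := e^{-Nr} \, e_{M,k}^{(c_k)}(Nr),
$$
where $e_{M,k}^{(c_k)}$ is the truncated exponential sum from \eqref{partialsums}. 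Since $c_k = |I_k| \sim N/M$, dividing by $c_k$ produces the prefactor $\tfrac{N}{c_k \pi M} \to \tfrac{1}{\pi}$, and the proposition reduces to proving $h_{N,M,k}(r) \to \tfrac{1}{M} \1_{r<1}$ pointwise on $(0,\infty) \setminus \{1\}$ together with a dominated-convergence exchange of limit and integral.

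The pointwise limit is the main point. Reading $e^{-Nr}(Nr)^m/m!$ as the mass at $m$ of a Poisson$(Nr)$ variable $Y_N$, one obtains
$$
h_{N,M,k}(r) = \P\!\left( Y_N \equiv k-1 \ [M], \ Y_N \leq (c_k-1)M + k - 1 \right).
$$
For $r > 1$ the cutoff $(c_k-1) M + k - 1 \sim N$ is strictly below the mean $Nr$, hence $h_{N,M,k}(r) \to 0$ by the Poisson CLT, exactly as in Fact~\ref{Poissasym}. For $r < 1$ the cutoff is asymptotically irrelevant and the statement reduces to $\P(Y_N \equiv k-1 \ [M]) \to \tfrac{1}{M}$, which follows from the character identity \eqref{partialsums}:
$$
e^{-Nr} e_{M,k}(Nr) = \frac{1}{M} \sum_{l=1}^M \overline{\zeta}^{(k-1)(l-1)} e^{(\zeta^{l-1} - 1) Nr},
$$
where the $l=1$ term contributes $1/M$ and every term with $l \neq 1$ decays exponentially since $\Re(\zeta^{l-1} - 1) < 0$.

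For the final exchange of limit and integral I would invoke dominated convergence: $h_{N,M,k}$ is uniformly bounded by $1$ as a sum of Poisson probabilities, the weight $|\omega|^{2/M-2}$ is locally integrable at the origin (since $2/M - 2 > -2$), and one may assume $f$ continuous and compactly supported. The main obstacle I foresee is the lack of uniformity of the Poisson asymptotics near the critical radius $r=1$, exactly the boundary phenomenon already present in Fact~\ref{Poissasym}; since $\{r=1\}$ has Lebesgue measure zero, this does not affect the limit of the integral.
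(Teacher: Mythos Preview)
Your proposal is correct and follows essentially the same route as the paper: both compute the one-point density from the kernel \eqref{kernelPG}, rewrite it as $\tfrac{N}{\pi c_k M}\,|z|^{2/M-2}\,e^{-N|z|^{2/M}}\,e_{M,k}^{(c_k)}(N|z|^{2/M})$, and then combine the character identity \eqref{partialsums} with the Poisson threshold of Fact~\ref{Poissasym} and $c_k\sim N/M$ to obtain the pointwise limit $\tfrac{1}{\pi M}\,|z|^{2/M-2}\,\1_{\D}$. Your write-up is in fact slightly more careful than the paper's, since you spell out the probabilistic reading $h_{N,M,k}(r)=\P(Y_N\equiv k-1\ [M],\ Y_N\le (c_k-1)M+k-1)$ and justify the exchange of limit and integral by dominated convergence, a step the paper leaves implicit.
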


\begin{proof}
It suffices to compute the asymptotic density, obtained directly from the determinantal kernel,
\begin{align*}
\frac{1}{ c_k Z_{\nu_{N,M,k}} } K_{N,M,k}(z,z) |z|^{2 \frac{k-M}{M}} e^{- {N} |z|^{2/M} } & = \frac{N}{\pi c_k M} |z|^{2/M -2} \sum_{l=0}^{c_k-1} \frac{(N |z|^{2/M})^{Ml+k-1}}{(Ml+k-1)!} e^{- {N} |z|^{2/M} } \\
& = \frac{N}{ \pi c_k M} |z|^{2/M -2} e_{M,k}^{(c_k)} (N |z|^{2/M}) e^{- {N} |z|^{2/M} }.
\end{align*}
We then use the asymptotics of the partial sums of the exponential, deduced from formula (\ref{partialsums}) and Proposition \ref{Poissasym}, as well as the fact that $c_k \sim \frac{N}{M}$, to conclude that
$$
\frac{1}{ c_k Z_{\nu_{N,M,k}} } K_{N,M,k}(z,z) |z|^{2 \frac{k-M}{M}} e^{- {N} |z|^{2/M} }  \xrightarrow[N \rightarrow \infty]{} \frac{1}{\pi M} |z|^{2/M-2} \mathds{1}_{\mathbb{D}},
$$
as was claimed.
\end{proof}
\noindent In other words, each Power-Ginibre block contributes equally to first order asymptotics. In fact, one can strengthen this averaged result as follows.
\begin{theorem}[Twisted Circular Law]\label{twist}
For any $k$, the empirical distribution of $\Gin(N,M,k)$ converges weakly to the $M$-twisted circular law, that is for any test function $f$
$$ \frac{1}{c_k} \sum_{i \in I_k} f(\la_j) \xrightarrow[]{a.s.} \frac{1}{\pi M } \int_{\D} f(\omega) |\omega|^{2/M-2} \dd m(\omega).$$
\end{theorem}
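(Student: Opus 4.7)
The plan is to upgrade the convergence in expectation (already established in Proposition~\ref{meantwist}) to almost-sure convergence. Since $\Gin(N,M,k)$ is a determinantal point process with an explicit projection kernel given by (\ref{kernelPG}), this is a standard exercise in the analysis of linear statistics of DPPs, and the argument divides into a variance computation plus a concentration or subsequence step.

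First, I would fix a test function $f$ in a convenient class — say $f \in C^1_c(\mathbb{C})$ to start with — and write $S_N(f) = \sum_{i \in I_k} f(\lambda_i)$. Proposition~\ref{meantwist} already controls $\E[S_N(f)]/c_k$, so the task is to bound the variance. For a DPP with kernel $K$ and reference measure $\nu$, the standard identity
\begin{equation*}
\mathrm{Var}(S_N(f)) \;=\; \frac{1}{2}\iint |f(z)-f(w)|^2\,|K(z,w)|^2\,\dd\nu(z)\dd\nu(w)
\end{equation*}
combined with $|f(z)-f(w)|^2 \le \|\nabla f\|_\infty^2\,|z-w|^2$ reduces the problem to controlling $\iint |z-w|^2|K_{N,M,k}(z,w)|^2\,\dd\nu_{N,M,k}(z)\dd\nu_{N,M,k}(w)$. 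Using the orthonormal monomial basis for $\nu_{N,M,k}$ that produced (\ref{kernelPG}), this double integral expands into a single sum involving ratios $\Gamma(M(l+1)+k)/\Gamma(Ml+k)$, and a direct estimate using the asymptotics of Gamma functions (mirroring the corresponding computation for $\Gin(N)$) yields $\mathrm{Var}(S_N(f)) = O(1)$ uniformly in $N$, for smooth $f$. Thus $\mathrm{Var}(S_N(f)/c_k) = O(1/N^2)$, which is summable.

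With a summable variance in hand, Chebyshev's inequality plus Borel--Cantelli gives, for every fixed $f \in C^1_c(\mathbb{C})$,
\begin{equation*}
\frac{1}{c_k}\sum_{i \in I_k} f(\lambda_i) \;\xrightarrow[N\to\infty]{\text{a.s.}}\; \frac{1}{\pi M}\int_\mathbb{D} f(\omega)|\omega|^{2/M-2}\,\dd m(\omega).
\end{equation*}
(Should the $O(1)$ variance estimate prove delicate near $|\omega|=1$, a clean fallback is the Laplace transform identity for DPPs, $\E[\exp(-tS_N(f))] = \det(I + (e^{-tf}-1)K_{N,M,k})$, which yields exponential concentration and hence summable tails by Borel--Cantelli even without the sharp variance bound.) Since $C^1_c(\mathbb{C})$ contains a countable family $\{f_n\}$ that is dense in $C_b(\mathbb{C})$ with respect to uniform convergence on compacts, intersecting the corresponding probability-one events produces a single event of full measure on which $S_N(f_n)/c_k$ converges to the claimed limit for all $n$.

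Finally, to pass from this countable family to arbitrary bounded continuous $f$, I would use a truncation and density argument. Because the measures have total mass $1$ and the limit measure is supported on $\overline{\mathbb{D}}$, one reduces to test functions essentially supported inside a fixed compact set; the near-boundary and tail contributions are handled by approximating $f$ uniformly on compacta by elements of $\{f_n\}$ and using the uniform mass bound to control the error. The main obstacle, as often in such proofs, is obtaining a variance estimate sharp enough to be summable in $N$; the weight $|z|^{2(k-M)/M}$ in $\nu_{N,M,k}$ introduces a mild singularity at the origin for $k<M$, so some care is needed when estimating the relevant Gamma ratios near the edge of the spectrum and at $z=0$, but the cancellation built into the variance formula (differences $f(z)-f(w)$) keeps everything finite.
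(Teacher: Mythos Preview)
Your proposal is correct and is precisely the ``canonical argument'' the paper has in mind: the paper does not actually write out a proof of Theorem~\ref{twist} but only refers to Hwang's survey, and your outline (DPP variance identity, $O(1)$ variance for $C^1_c$ test functions via the orthonormal monomial basis and Gamma-ratio asymptotics, Borel--Cantelli, then a countable determining family plus tightness) is exactly how one fleshes this out. The one point worth tightening is the passage from $C^1_c$ to general $C_b$: rather than density in $C_b$ (which fails for compactly supported functions), it is cleaner to note that a.s.\ convergence on a countable $C^1_c$ family gives vague convergence, and then tightness follows by applying the already-established a.s.\ convergence to a smooth cutoff $\chi_R$ equal to $1$ on $\{|z|\le R\}$ for $R>1$, since the limit measure is supported on $\overline{\mathbb D}$.
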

A proof of this convergence can be deduced from the Mean Circular Law using canonical arguments, reviewed for instance by Hwang in \cite{Hwang}. For the sake of brevity we do not reproduce the argument here.
\begin{remark}
The above results hold for any fixed $M$, and $N$ going to $\infty$. As we know that $M \geq N$ gives independent points, one could say the parameter $M$ gives an interface between random matrix statistics and independent statistics. 
\end{remark}

\subsubsection{Gaussian Free Field.}
In this section we will consider a smooth function $f : \mathbb{C} \mapsto \mathbb{R}$ with compact support in $\mathbb{D}$. The following was proved in \cite{RiderVirag}.

\begin{theorem}[Rider, Vir\'ag]\label{GFF1}
If we denote by $X_f$ the centered linear statistics of the complex Ginibre ensemble,
$$X_f^{(N)} = \sum_{i=1}^N f(\lambda_i) - \frac{N}{\pi} \int_{\mathbb{D}} f(z) \dd m(z),$$
then $X_f$ converges without renormalization to a gaussian variable, namely:
$$ X_f^{(N)} \xrightarrow[N \rightarrow \infty]{d} \mathscr{N}(0,\sigma_f^2), \quad \text{where } \sigma_f^2 = \frac{1}{4\pi}\int_{\mathbb{D}} |\nabla f(z)|^2 \dd m(z).$$
\end{theorem}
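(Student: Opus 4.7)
The plan is to exploit the determinantal structure of $\Gin(N)$, whose correlation kernel with respect to $\dd m$ is
$$
K_N(z,w) = \frac{N}{\pi}\, e^{-\frac{N}{2}(|z|^2+|w|^2)} \sum_{k=1}^{N}\frac{(N z\overline{w})^k}{k!}.
$$
The Fredholm-determinant identity
$$
\E\bigl(e^{t X_f^{(N)}}\bigr) = e^{-t\int f\, K_N(z,z)\, \dd m(z)}\, \det\bigl(I + (e^{tf}-1)K_N\bigr)
$$
expresses every cumulant $\kappa_p(X_f^{(N)})$ as a polynomial in the traces of products of the form $K_N f^{a_1} K_N f^{a_2}\cdots K_N f^{a_\ell}$. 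By the method of cumulants, the theorem reduces to two statements: (i) $\mathrm{Var}(X_f^{(N)}) \to \sigma_f^2$, and (ii) $\kappa_p(X_f^{(N)})\to 0$ for every $p\geq 3$.

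For the variance, the reproducing property $K_N(z,z) = \int |K_N(z,w)|^2\, \dd m(w)$ puts the formula into its symmetric form
$$
\mathrm{Var}(X_f^{(N)}) = \frac{1}{2}\iint \bigl(f(z)-f(w)\bigr)^2\, |K_N(z,w)|^2\, \dd m(z)\, \dd m(w).
$$
The analytic input is the bulk asymptotic
$$
|K_N(z,w)|^2 = \frac{N^2}{\pi^2}\, e^{-N|z-w|^2}\bigl(1+o(1)\bigr),
$$
valid uniformly for $z$ in a compact subset of $\mathbb{D}$ and $|z-w|\leq N^{-1/2+\delta}$, which is obtained by factoring $|K_N(z,w)|^2 = \frac{N^2}{\pi^2}e^{-N|z-w|^2}\bigl|e^{-Nz\overline{w}}\sum_{k=1}^N (Nz\overline{w})^k/k!\bigr|^2$ and invoking the Poisson-tail estimate underlying Fact~\ref{Poissasym}. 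Inserting the Taylor expansion $f(w)-f(z) = N^{-1/2}\nabla f(z)\cdot u + O(N^{-1})$ for $w=z+u/\sqrt{N}$, rescaling with Jacobian $N^{-1}$, and using the elementary Gaussian integral $\int_{\mathbb{C}} |\nabla f(z)\cdot u|^2 e^{-|u|^2}\, \dd m(u) = \tfrac{\pi}{2}|\nabla f(z)|^2$ yields
$$
\mathrm{Var}(X_f^{(N)}) \longrightarrow \frac{1}{4\pi}\int_{\mathbb{D}} |\nabla f(z)|^2\, \dd m(z) = \sigma_f^2.
$$
Since $\mathrm{supp}(f)$ is a compact subset of $\mathbb{D}$, no edge analysis is required.

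For cumulants of order $p\geq 3$, the same Fredholm expansion reduces $\kappa_p(X_f^{(N)})$ to a finite sum of cyclic integrals
$$
\int \prod_{j=1}^{p} f(z_j)^{a_j}\, K_N(z_j,z_{j+1})\, \dd m(z_j) \qquad (a_j\geq 1,\ \text{indices mod } p),
$$
and the cumulant cancellations, analogous to the symmetrization that produced the variance formula, leave $p$ independent differences $f(z_{j+1})-f(z_j)$. Substituting $z_{j+1} = z_j + u_j/\sqrt{N}$ produces a Jacobian $N^{-(p-1)}$, the $p$ kernel factors contribute $N^{p}$, and each of the $p$ Taylor differences gives $N^{-1/2}$; the net order of the cumulant is $N^{1-p/2} = o(1)$ for $p\geq 3$.

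The main obstacle is the uniform off-diagonal control of the kernel: to justify dominated convergence in both the variance integral and the cyclic cumulant integrals, one needs, in addition to the pointwise asymptotic above, a Gaussian upper bound $|K_N(z,w)|\leq C N\, e^{-c N|z-w|^2}$ throughout a neighbourhood of $\mathrm{supp}(f)$. This follows from the same exponential-series estimates that underlie Fact~\ref{Poissasym}, but it is the sole step that requires genuine analytic work beyond explicit Gaussian integration.
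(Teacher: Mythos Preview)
The paper does not give its own proof of this theorem: it is stated as a known result of Rider and Vir\'ag, attributed to \cite{RiderVirag}, and then used as input. There is therefore no proof in the paper to compare your proposal against.

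That said, your outline is the standard cumulant method for CLTs of linear statistics of determinantal processes, and it is essentially the approach of \cite{RiderVirag} (and of \cite{CostLeb, AmeurHeden}). It is also the method the present paper adopts when it proves the closely related Theorem~\ref{GFF2} for the Power-Ginibre blocks: there the second cumulant is computed via the symmetrized formula and the bulk Gaussian asymptotic of the kernel, exactly as you describe, while the higher cumulants are handled not by your direct power-counting but by a shortcut (tightness together with Cram\'er's theorem and the already-known Theorem~\ref{GFF1}). Your variance computation is correct, and your scaling heuristic $\kappa_p \asymp N^{1-p/2}$ for $p\geq 3$ is the right one; the one genuine analytic step is the uniform Gaussian off-diagonal bound on $K_N$ that you yourself flag at the end.
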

Applying this theorem to the function $f_M =f\circ \pi_M$, which is still smooth and compactly supported in $\D$, we get
$$ X_{f_M}^{(N)} \xrightarrow[N \rightarrow \infty]{d} \mathscr{N}(0,\sigma_M^2), \quad \text{where } \sigma_M^2 = \frac{1}{4\pi}\int_{\mathbb{D}} |\nabla f_M(z)|^2 \dd m(z).$$
Using the identity $|\nabla f_M|^2 = |\pi_M'(z)|^2 |\nabla f (z^M)|^2$, the usual change of variable yields
$$\sigma_M^2 = M \sigma_f^2. $$
On the other hand, using Power-Ginibre decomposition and Theorem \ref{meantwist},
\begin{align*}
X_{f_M}^{(N)} = \sum_{i=1}^N f(\lambda_i^M) - \frac{N}{\pi} \int_{\mathbb{D}} f(z^M) \dd m(z) 
= \sum_{k=1}^M \left( \sum_{i \in I_k} f(z_i) - \frac{c_k}{M \pi} \int_{\mathbb{D}} f(z) |z|^{2/M-2} \dd m(z) \right)
\end{align*}
which is a sum of $M$ centered independent terms, converging to a gaussian variable with variance $M \sigma_f^2$. This simple fact, together with Theorem \ref{meantwist}, suggests that every term converges to a centered gaussian with variance $\sigma_f^2$, which is indeed the case. In order to prove this, we first need to evaluate some more precise asymptotics of the Power-Ginibre Kernel. We consider the following related quantity~:
$$ J_{N,M,k}(z,\omega) := \frac{1}{Z_{\nu_{N,M,k}}^2} |K_{N,M,k}(z, \omega)|^2 |z|^{2 \frac{k-M}{M}}|\omega|^{2 \frac{k-M}{M}} e^{-N |z|^{\frac{2}{M}}-N |\omega|^{\frac{2}{M}}}.$$

For any $\epsilon > 0$ we will denote
$$
\Omega_{\epsilon} = \{ (z, \omega) \in \D^2 \ | \ |\pi - \arg(z \bar{\omega}) | > \epsilon \}.
$$

And $\angle_{M, \epsilon} := \{ (z, \omega) \in \D \ | \ z \in \angle_{M}, \ (z^M, \omega^M) \in \Omega_{\epsilon}\}$.

\begin{lemma}\label{Mroot}
For any $\epsilon >0$, there is a $\delta_1>0$ such that the following holds on $\Omega_{\epsilon}$~:
$$
J_{N,M,k}(z,\omega)
=
 \frac{N^{2} }{\pi^2 M^4} |z \omega|^{2/M -2} e^{-N|z^{\frac{1}{M}}-\omega^{\frac{1}{M}}|^2} (1+O(e^{-N\delta_1})),
$$
where the representatives $z^{\frac{1}{M}}$ and $\omega^{\frac{1}{M}}$ are chosen so as to minimize their distance; and there is a $\delta_2>0$ such that the following holds on $\Omega_{\epsilon}^c$~:
$$
J_{N,M,k}(z,\omega)=O(e^{-N\delta_2}).
$$
\end{lemma}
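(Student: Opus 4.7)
The plan is to reduce the Power-Ginibre kernel to a sum of partial exponentials via a discrete Fourier transform on the $M$-th roots of unity, and then isolate the dominant branch. Setting $a = z^{1/M}$ and $b = \omega^{1/M}$ for some fixed choice of branches, a re-indexing of the defining series yields the compact form
\[
K_{N,M,k}(z,\omega) = \frac{\Gamma(k)}{(Na\bar b)^{k-1}}\, e_{M,k}^{(c_k)}(Na\bar b).
\]
The Vandermonde argument behind (\ref{partialsums}) applies verbatim to truncated series and produces
\[
e_{M,k}^{(c_k)}(x) = \frac{1}{M}\sum_{l=1}^{M}\overline{\zeta}^{(k-1)(l-1)} E_{Mc_k}(\zeta^{l-1} x), \qquad E_n(t) := \sum_{j=0}^{n-1}\frac{t^j}{j!}.
\]
Since $Mc_k = N + O(1)$ and $|a\bar b| < 1$ in the open bidisk, the complex-argument strengthening of Fact \ref{Poissasym} should give $E_{Mc_k}(N\zeta^{l-1} a\bar b) = \exp(N\zeta^{l-1}a\bar b)(1 + O(e^{-cN}))$ uniformly on compacta.

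I would then let $\eta := \zeta^{l_0 - 1} a\bar b$ denote the branch of largest real part among $\{\zeta^{l-1}a\bar b\}_{l=1}^M$. On $\Omega_\epsilon$ this maximum is strictly separated from the remaining $M-1$ values by a gap depending only on $\epsilon$, so the sum of exponentials collapses to its leading term:
\[
\sum_{l=1}^M \overline{\zeta}^{(k-1)(l-1)} \exp(N\zeta^{l-1}a\bar b) = \overline{\zeta}^{(k-1)(l_0-1)} e^{N\eta}\bigl(1 + O(e^{-N\delta_1})\bigr).
\]
Squaring the modulus and inserting the prefactors $1/Z_{\nu_{N,M,k}}^2 = N^{2k}/(\pi^2 M^2 \Gamma(k)^2)$, the weight $|z|^{2(k-M)/M}|\omega|^{2(k-M)/M} = (|a||b|)^{2(k-M)}$, and the factor $|Na\bar b|^{-2(k-1)}$ coming from the kernel, every power of $k$ cancels and one is left with
\[
J_{N,M,k}(z,\omega) = \frac{N^2}{\pi^2 M^4}\, |z\omega|^{2/M - 2}\,\exp\!\bigl(-N(|a|^2 + |b|^2 - 2\mathrm{Re}\,\eta)\bigr)\bigl(1 + O(e^{-N\delta_1})\bigr).
\]
Choosing $\omega^{1/M} := \overline{\zeta}^{l_0-1} b$, which by definition minimizes the distance to $z^{1/M} = a$, reduces the exponent to $|z^{1/M} - \omega^{1/M}|^2$ and gives the first claim.

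For the bound on $\Omega_\epsilon^c$, the condition $|\pi - \arg(z\bar\omega)| \leq \epsilon$ forces every branch $\zeta^{l-1} a\bar b$ to make an angle of at least $(\pi - \epsilon)/M$ with the positive real axis, so $\mathrm{Re}(\zeta^{l-1}a\bar b) \leq |a\bar b|\cos((\pi-\epsilon)/M)$. Together with $|a|^2 + |b|^2 \geq 2|a||b|$ this yields
\[
|a|^2 + |b|^2 - 2\mathrm{Re}\,\eta \;\geq\; 2|a||b|\bigl(1 - \cos((\pi-\epsilon)/M)\bigr),
\]
uniformly positive away from the origin, which upgrades the same identity to $J_{N,M,k}(z,\omega) = O(e^{-N\delta_2})$. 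The main obstacle is the complex Poisson asymptotic for $E_n(t)$ when $|t|/n$ is allowed to approach $1$: a careful saddle-point analysis near the unit circle in the $a\bar b$ variable is needed, and it is this estimate that determines the rates $\delta_1, \delta_2$ and their dependence on $\epsilon$ as well as on how close to $\partial\D$ and to the origin one is willing to go.
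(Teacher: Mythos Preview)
Your proof is correct and follows essentially the same route as the paper: decompose the truncated series via the discrete Fourier transform over $M$-th roots of unity (the paper writes this as a sum $\frac{\Gamma(k)}{M}\sum_{u\in\pi_M^{-1}(z\bar\omega)}(Nu)^{1-k}e_N(Nu)$, which is your formula with the phase $(Na\bar b)^{1-k}$ distributed inside the sum), isolate the branch of largest real part, and assemble the prefactors. Your bookkeeping of the constants and your reduction of the exponent to $|z^{1/M}-\omega^{1/M}|^2$ match the paper exactly; if anything you are more careful, since the paper dispatches the $\Omega_\epsilon^c$ bound in a single sentence and does not flag the complex-argument Poisson asymptotic or the behaviour near $\partial\mathbb{D}$ and the origin that you correctly identify as the genuine technical points.
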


\begin{proof} If $\zeta$ is a primitive $M$-th root of unity, then the preimage $\pi_M^{-1}({z})$ is stable under multiplication by $\zeta$. We will denote by $z^{\frac{1}{M}}$ a chosen representative, and state precisely where this choice matters, and when it does not. \medskip

It is clear that the expression
$$
{w^{1-k}} {e_{M,k}(w)} = \sum_{j \geq 0} \frac{w^{Mj}}{(Mj+k-1)!}
$$
is stable under multiplication by $\zeta$, and so we can write $ z^{\frac{1-k}{M}} {e_{M,k}(z^{\frac{1}{M}})}$ without possible confusion. Replacing $e_{M,k}$ by a linear combination of exponentials according to (\ref{partialsums}), one gets 
$$
z^{\frac{1-k}{M}} {e_{M,k}(z^{\frac{1}{M}})}
=
\frac{1}{M} \sum_{l=1}^{M} \left( {\zeta^{l-1} z^{\frac{1}{M}}}\right)^{1-k} e^{\zeta^{l-1} z^{\frac{1}{M}}}
=
\frac{1}{M} \sum_{\omega \in \pi_M^{-1}(z)} \omega^{1-k} e^{\omega},
$$
which indeed doesn't depend on the choice of a representative, as it is an average over all representatives. The same can be written about partial sums up to degree $N$. The above expression is then the one we find in the kernel of $\Gin(N,M,k)$~:
$$
K_{N,M,k} (z,\omega) 
= 
\Gamma(k) \sum_{l=0}^{N-1} \frac{(N (z \bar{\omega})^{\frac{1}{M}})^{Ml}}{(Ml+k-1)!}
=
\frac{\Gamma(k)}{M} \sum_{u \in \pi_M^{-1}(z \bar{\omega})} (Nu)^{1-k} e_N(Nu)
$$
The dominant term in the asymptotics will be the choice of $u \in \pi_M^{-1}(z \bar{\omega})$ with the largest real part. This is uniquely defined when $z \bar{\omega} \in \mathbb{D}-\mathbb{R}_-$ and corresponds to $z^{\frac{1}{M}} \omega^{\frac{1}{M}}$ where the two representatives are chosen as to minimize the distance $|z^{\frac{1}{M}}-\omega^{\frac{1}{M}}|$. Combining the kernel with the density of the reference measure, it follows that, on $\Omega_{\epsilon}$, bounding by some $\delta_1>0$ the real part of the difference between $(z \bar{\omega})^{1/M}$ and the other possible choices of the $M$-th root,
\begin{align*}
J_{N,M,k}(z,\omega) := &\frac{1}{Z_{\nu_{N,M,k}}^2} |K_{N,M,k}(z, \omega)|^2 |z|^{2 \frac{k-M}{M}}|\omega|^{2 \frac{k-M}{M}} e^{-N |z|^{\frac{2}{M}}-N |\omega|^{\frac{2}{M}}} \\
& = \frac{N^{2} }{\pi^2 M^4} |z \omega|^{2/M -2} e^{N (z \bar{\omega})^{1/M}+ N (\bar{z} \omega)^{1/M} -N |z|^{\frac{2}{M}}-N |\omega|^{\frac{2}{M}} }(1+O(e^{-N\delta_1}))  \\
& = \frac{N^{2} }{\pi^2 M^4} |z \omega|^{2/M -2} e^{-N|z^{\frac{1}{M}}-\omega^{\frac{1}{M}}|^2} (1+O(e^{-N\delta_1}))
\end{align*}
as was claimed. The bound outside $\Omega_{\epsilon}$ is obtained in the same way.
\end{proof}

We can now state a convergence result for the centered linear statistics of the Power-Ginibre distributions. The gaussian limit is the same as in Theorem \ref{GFF1}, as was claimed. Recall that $c_k$ is the number of points in $\Gin(N,M,k)$ (see (\ref{Progression})).

\begin{theorem}[Gaussian Free Field for Power-Ginibre]\label{GFF2}
For fixed $M,k$, and any smooth $f$ with compact support in $\D$, the centered linear statistics defined by
$$X_{f}^{(N,M,k)} = \sum_{i \in I_k} f(z_i) - \frac{c_k}{M \pi} \int_{\mathbb{D}} f(z) |z|^{2/M-2} \dd m(z)$$ converge without renormalization to a gaussian variable, namely :
$$ X_{f}^{(N,M,k)} \xrightarrow[N \rightarrow \infty]{d} \mathscr{N}(0,\sigma_f^2), \quad \text{where } \sigma_f^2 = \frac{1}{4\pi}\int_{\mathbb{D}} |\nabla f(z)|^2 \dd m(z).$$
\end{theorem}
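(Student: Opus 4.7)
The plan is to reduce Theorem \ref{GFF2} to Theorem \ref{GFF1} by combining two ingredients: a direct variance computation based on the asymptotics of Lemma \ref{Mroot}, and an application of Cram\'er's theorem made possible by the independence between Power-Ginibre blocks established in Theorem \ref{PG2}.

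For the variance, the standard determinantal identity for the point process $\Gin(N,M,k)$ gives
$$ \mathrm{Var}\bigl(X_f^{(N,M,k)}\bigr) = \frac{1}{2} \iint_{\D^2} |f(z)-f(\omega)|^2 \, J_{N,M,k}(z,\omega) \, \dd m(z)\dd m(\omega). $$
By the second half of Lemma \ref{Mroot}, the contribution from $\Omega_\epsilon^c$ is of order $O(e^{-N\delta_2})$ and thus negligible. On $\Omega_\epsilon$, substituting the leading asymptotic and performing the change of variables $z = u^M$, $\omega = v^M$ (with $u$ ranging over the fundamental domain $\angle_M$ and $v$ taken as the minimal-distance preimage of $\omega$ near $u$), the weight $|z\omega|^{2/M-2}$ cancels exactly the Jacobian $M^4 |uv|^{2M-2}$. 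Writing $f_M = f \circ \pi_M$, one arrives at
$$ \mathrm{Var}\bigl(X_f^{(N,M,k)}\bigr) = \frac{N^2}{2\pi^2} \int_{\angle_M} \int_{D_u} |f_M(u)-f_M(v)|^2 e^{-N|u-v|^2} \, \dd m(v)\dd m(u) + o(1), $$
where $D_u$ is the Voronoi cell of $u$ under the $\Z/M\Z$ rotation action on $\pi_M^{-1}(\omega)$. Since $f$ is compactly supported strictly inside $\D$ and the Gaussian kernel $e^{-N|u-v|^2}$ concentrates at scale $N^{-1/2}$, the restriction $v \in D_u$ may be lifted to $v \in \R^2$ at exponential cost. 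A Taylor expansion together with the Gaussian moment $\int_{\R^2} |w|^2 e^{-N|w|^2} \, \dd m(w) = \pi/N^2$ yields
$$ \mathrm{Var}\bigl(X_f^{(N,M,k)}\bigr) \longrightarrow \frac{1}{4\pi} \int_{\angle_M} |\nabla f_M(u)|^2 \, \dd m(u) = \frac{1}{4\pi}\int_{\D} |\nabla f(z)|^2 \, \dd m(z) = \sigma_f^2, $$
the last equality using $|\nabla f_M(u)|^2 = M^2 |u|^{2M-2} |\nabla f(u^M)|^2$ together with (\ref{CoV}). Notably, this limit does not depend on $k$, since the $k$-dependence of $K_{N,M,k}$ is subleading in Lemma \ref{Mroot}.

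For the distributional convergence, Theorem \ref{PG2} ensures that $X_f^{(N,M,1)}, \dots, X_f^{(N,M,M)}$ are independent at every fixed $N$, with total sum equal to $X_{f_M}^{(N)}$. Applying Theorem \ref{GFF1} to $f_M$ gives $X_{f_M}^{(N)} \to \mathscr{N}(0, M\sigma_f^2)$ in distribution. The variance bound just established provides tightness of each marginal sequence. Along any joint subsequence, one extracts an independent vector $(Y_1, \dots, Y_M)$ whose components each have variance $\sigma_f^2$ and whose sum is a centered Gaussian. Cram\'er's theorem then forces every $Y_k$ to be Gaussian, necessarily $\mathscr{N}(0, \sigma_f^2)$, and since every subsequential limit is the same, the full sequence converges as claimed.

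The main obstacle is the variance calculation, specifically the careful bookkeeping of Jacobian weights under the $M$-to-one change of variables $z = u^M$ and the verification that the Voronoi restriction $v \in D_u$ can be lifted to $\R^2$ without altering the leading order. The compact support of $f$ strictly inside $\D$ keeps the support away from the Voronoi boundary and from the origin, so the exponential concentration of $e^{-N|u-v|^2}$ absorbs any such mismatch. Once the limiting variance is identified, the remainder is a clean reduction: the independence of Power-Ginibre blocks provided by Theorem \ref{PG2} combined with Cram\'er's theorem turns the Rider--Vir\'ag CLT into a CLT for each individual block.
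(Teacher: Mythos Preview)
Your proposal follows essentially the same strategy as the paper's own proof: compute the limiting variance using the determinantal formula together with the kernel asymptotics of Lemma~\ref{Mroot} and the change of variables $z=u^M$, $\omega=v^M$, and then deduce Gaussianity by combining the independence of the Power-Ginibre blocks (Theorem~\ref{PG2}) with Theorem~\ref{GFF1} applied to $f_M$ and Cram\'er's theorem on subsequential limits. The paper shortcuts the final step of the variance computation by citing the analogous calculation in \cite{Circ2}, whereas you carry out the Taylor expansion and Gaussian moment explicitly; this is a cosmetic difference, not a substantive one.

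There is one small omission worth flagging. Your Cram\'er argument yields that each subsequential limit $Y_k$ is Gaussian with variance $\sigma_f^2$ (the variance identification uses lower semicontinuity plus the constraint $\sum_k\mathrm{Var}(Y_k)=M\sigma_f^2$), but it does not by itself force $\E[Y_k]=0$: from $\sum_k Y_k\sim\mathscr N(0,M\sigma_f^2)$ you only get $\sum_k\E[Y_k]=0$. The paper closes this gap by observing separately that $\mathrm{cum}_N(1)=\E\bigl[X_f^{(N,M,k)}\bigr]\to 0$, which follows from the density asymptotics in the proof of Proposition~\ref{meantwist} (one needs the rate, not just the averaged convergence). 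With that one line added, your argument is complete and matches the paper's.
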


\begin{proof}
We refer to the general, now well-known, method developped in \cite{CostLeb, AmeurHeden, RiderVirag}. Concretely, we adapt the proof of Theorem~4.1 in \cite{Circ2} in the Ginibre case to the Power-Ginibre case and emphasize what is essentially new. The computation of the variance relies on the explicit formula of the second cumulant,
$$
\mathrm{cum}_N(2) = \frac{1}{2}\int_{\D} \int_{\D} (f(z)-f(\omega))^2 |K_{N,M,k}(z,\omega)|^2 \dd\nu(z) \dd\nu(\omega).
$$
By $z^{1/M}$ we mean any preimage of $z$ by $\pi_M$. The important thing as we will see is that the preimages of $z$ and $\omega$ are chosen in the closest possible way.
\begin{align*}
\mathrm{cum}_N(2) & = \frac{1}{2}\int_{\D^2} (f(z)-f(\omega))^2 |K_{N,M,k}(z,\omega)|^2 \dd\nu(z) \dd\nu(\omega) \\
&= \frac{1}{2}\int_{\D^2} (f(z)-f(\omega))^2 J_{N,M,k}(z,\omega) \dd m (z) \dd m(\omega) \\
&= \frac{1}{2}\int_{\Omega_{\epsilon}} (f(z)-f(\omega))^2 \frac{N^{2} }{\pi^2 M^4} |z \omega|^{2/M -2} e^{-N|z^{\frac{1}{M}}-\omega^{\frac{1}{M}}|^2} \left(1+O(e^{-N\delta_1})\right) \dd m (z) \dd m(\omega) +O(e^{-N\delta_2}) \\
&= \frac{N^{2} }{2 \pi^2} \int_{\angle_{M,\epsilon}} (f(z^M)-f(\omega^M))^2 e^{-N|z-\omega|^2} \dd m (z) \dd m(\omega) \left(1+O(e^{-N\delta_1})\right) +O(e^{-N\delta_2}) \\
&= \frac{N^{2} }{2 \pi^2 M} \int_{\Omega_{\epsilon}} (f(z^M)-f(\omega^M))^2 e^{-N|z-\omega|^2} \dd m (z) \dd m(\omega) \left(1+O(e^{-N\delta_1})\right) +O(e^{-N\delta_2}).
\end{align*}
This integral is the same that appears in the proof of Theorem~4.1 of \cite{Circ2}, applied to the function $f_M$. We can state with $\delta= \min(\delta_1, \delta_2)$~:
$$\mathrm{cum}_N(2)
=  \frac{1}{4 \pi M} \int_{\D} |\nabla f_M|^2 + O(e^{-N\delta})
= \frac{1}{4 \pi} \int_{\D} |\nabla f|^2 + O(e^{-N\delta}).
$$

It remains to prove that other cumulants vanish. It can be checked, using expressions from the proof Theorem~\ref{meantwist} that $\mathrm{cum}_N(1) \xrightarrow[N \rightarrow \infty]{} 0$. For higher cumulants, one could follow the method of \cite{Circ2} in order to bound $\mathrm{cum}_N(l)$ directly when $l \geq 3$. For the convenience of the reader, we present another way to conclude. The variables are centered and have bounded variances, so the families of their distributions are tight. By Theorem \ref{GFF1} and Power-Ginibre decomposition, any converging subsequences of these families are such that their independent limits sum up to a gaussian variable. By Cram\'er's theorem, each of these limits is a gaussian variable. Therefore, every centered linear statistics do converge to a gaussian variable, as was claimed.
\end{proof}

We have used the determinantal structure of the Power-Ginibre processes to establish convergence of the linear statistics to the twisted circular law, and convergence of the centered statistics to the Gaussian Free Field. Note that neither the mean, nor the variance of the linear statistics depend on the parameter $k$ in the limit. Moreover, the variance does not depend on the power $M$ : the limit Gaussian Free Field is the same as for the usual complex Ginibre ensemble.


\subsubsection{Microscopic analysis of the Power-Ginibre kernels.}
We have seen above that the parameter $k$ did not impact the first and second order asymptotic properties of the properly scaled Power-Ginibre distributions. However, it does appear in the microscopic limit. \medskip

In the following, we assume that a primitive $M$-th root of unity $\zeta$ has been chosen, as well as a determination of the $M$-th root for $z, \omega$. The result does not depend on these arbitrary choices.
\begin{proposition}
The unscaled process $N^{M/2} \Gin(N,M,k)$ converges to a determinantal point process, whose kernel is given by the average
\begin{equation}\label{kernelzoom}
\mathscr{K}_{M,k}(z,w) = \frac{1}{M} 
\sum_{j=0}^{M-1} \zeta^{j(1-k)} 
\exp \left( - \frac{1}{2} | \zeta^j z^{\frac{1}{M}} - \bar{\omega}^{\frac{1}{M}}|^{2} + i \Im ( \zeta^{j} z^{\frac{1}{M}} \bar{\omega}^{\frac{1}{M}}) \right)
\end{equation}
with respect to the twisted measure $\frac{1}{\pi M} 
|z|^{\frac{2}{M}-2} \dd m(z)$ on $\mathbb{C}$.
\end{proposition}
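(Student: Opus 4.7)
The plan is to compute the microscopic scaling limit of the explicit Power-Ginibre kernel (\ref{kernelPG}) and recognize it in the claimed form. First, substituting $z = Z/N^{M/2}$ and $w = W/N^{M/2}$ in (\ref{kernelPG}) cancels every power of $N$ inside the summand, so
\[
K_{N,M,k}\bigl(Z/N^{M/2}, W/N^{M/2}\bigr) = \Gamma(k) \sum_{l=0}^{c_k - 1} \frac{(Z\bar W)^l}{(Ml + k - 1)!}.
\]
A straightforward change of variables shows that the pushforward of $\dd\nu_{N,M,k}$ under $Z \mapsto N^{M/2} Z$ is the $N$-independent measure $\frac{1}{\pi M \Gamma(k)}|Z|^{2(k-M)/M} e^{-|Z|^{2/M}} \dd m(Z)$. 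Because $c_k \sim N/M \to \infty$ and the series $\sum_{l \geq 0} u^{Ml}/(Ml+k-1)!$ has infinite radius of convergence, the partial sum converges to $\Gamma(k)\, u^{1-k} e_{M,k}(u)$ uniformly on compact sets of $\mathbb{C}^2$, for any $u$ with $u^M = Z\bar W$. Applying identity (\ref{partialsums}) and reindexing $j = l - 1$ rewrites this limit as $\frac{\Gamma(k)}{M} \sum_{j=0}^{M-1} \zeta^{j(1-k)}\, u^{1-k} e^{\zeta^j u}$.

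Second, I would translate this into a kernel with respect to the target reference measure $\frac{1}{\pi M}|Z|^{2/M - 2}\dd m(Z)$. The standard device is to divide the kernel by $\sqrt{f(Z) f(W)}$, where $f(Z) = \Gamma(k)|Z|^{2(1-k)/M} e^{|Z|^{2/M}}$ is the Radon--Nikodym derivative between the two densities; this conversion produces the Gaussian weight $e^{-(|Z|^{2/M} + |W|^{2/M})/2}$ and cancels both the $\Gamma(k)$ and the $|Z|^{(1-k)/M}|W|^{(1-k)/M}$ prefactor. Choosing $u = Z^{1/M} \bar W^{1/M}$ with compatible branches, the leftover phase $u^{1-k}/|u|^{1-k}$ factors as $h(Z)\overline{h(W)}$ with $|h|=1$, which is harmless since multiplication of a determinantal kernel by such a unit-modulus factor preserves every correlation function. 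A direct algebraic manipulation of the resulting exponent $\zeta^j Z^{1/M} \bar W^{1/M} - \tfrac{1}{2}(|Z|^{2/M}+|W|^{2/M})$ then casts it in the form $-\tfrac{1}{2}|\cdot|^2 + i\Im(\cdot)$ appearing in (\ref{kernelzoom}). Standard results on weak convergence of determinantal point processes from locally uniform convergence of kernels (cf.\ \cite{HKPV}) then conclude.

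The principal technical subtlety lies in the branch bookkeeping during the gauge step: the limiting point process cannot depend on the arbitrary choices of $M$-th root, and one must verify that all branch-dependent prefactors combine neatly into the unit-modulus form $h(Z)\overline{h(W)}$ so that they can be absorbed without affecting the DPP. Once this is done, the remaining identification is routine.
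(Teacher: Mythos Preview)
Your proposal is correct and follows essentially the same route as the paper: rescale the explicit kernel (\ref{kernelPG}) by $Z=N^{M/2}z$, observe that all $N$-dependence disappears except through $c_k$, let $c_k\to\infty$ to obtain the full series, and then invoke (\ref{partialsums}) to rewrite the limit as an average over $M$-th roots of unity before reorganizing the exponent. Your treatment of the change of reference measure via the gauge $K\mapsto K/\sqrt{f(Z)f(W)}$ and the absorption of the residual unit-modulus phase into a factor $h(Z)\overline{h(W)}$ is in fact more explicit than the paper, which simply absorbs $(z\bar\omega)^{(k-M)/M}e^{-\frac12(|z|^{2/M}+|\omega|^{2/M})}$ into the kernel and passes to the twisted measure at the end without commenting on the branch-dependent phase; your identification of this as the main technical subtlety is apt.
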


\begin{proof}
Proposition \ref{kernelPG} tells us that the process $N^{M/2} \Gin(N,M,k)$ is a determinantal point process with kernel
$$
 K_{N,M,k} \left(\frac{z}{\sqrt{N}},\frac{w}{\sqrt{N}} \right) = \Gamma(k) \sum_{l=0}^{c_k-1} \frac{(z \overline{w})^l}{(Ml+k-1)!}
$$
with respect to the measure
$$
\dd \nu_{N,M,k} \left( \frac{z}{\sqrt{N}} \right)= \frac{1}{\pi M \Gamma(k)} |z|^{2 \frac{k-M}{M}} e^{- |z|^{2/M} } \dd m(z).
$$
The parameter $N$ is only reflected in $c_k$, that is a deterministic number of order $N/M$. For a given choice of the $M$-th roots $z^{\frac{1}{M}}$, $\omega^{\frac{1}{M}}$, and $\zeta$ a primitive root of unity, when $M,k$ are fixed and $N \rightarrow \infty$, the above converges to a determinantal point process with kernel
$$
\tilde{K}_{M,k}(z,w) = \frac{1}{\pi M} \sum_{l\geq 0}\frac{(z \overline{w})^l}{(Ml+k-1)!} (z \bar{\omega})^{\frac{k-M}{M}} e^{- \frac{1}{2} \left( |z|^{2/M} + |\omega|^{2/M} \right)}
$$
with respect to the Lebesgue measure on $\mathbb{C}$. We now transform this expression using the definitions and identities involved in the proof of Lemma \ref{Mroot}.  It becomes
$$
\tilde{K}_{M,k}(z,w) = \frac{1}{\pi M^2} 
\sum_{j=0}^{M-1} \zeta^{j(1-k)} (z \bar{\omega})^{\frac{1-M}{M}} 
e^{\zeta^j (z \bar{\omega})^{\frac{1}{M}}}
e^{- \frac{1}{2} \left( |z|^{2/M} + |\omega|^{2/M} \right)}
$$
The difference with the scaled case is that all term of this sum have a non trivial contribution. We write~:
$$
\tilde{K}_{M,k}(z,w) = \frac{1}{\pi M^2} 
(z \bar{\omega})^{\frac{1-M}{M}} 
\sum_{j=0}^{M-1} \zeta^{j(1-k)} 
\exp \left( - \frac{1}{2} \left( | z^{\frac{1}{M}}|^2 + |\omega^{\frac{1}{M}}|^2 - 2 \zeta^{j} z^{\frac{1}{M}} \bar{\omega}^{\frac{1}{M}}) \right) \right)
$$
The result follows when the reference measure is the pushforward of the Lebesgue measure by $\pi_M$.
\end{proof}

\section{Generalization and partial results}\label{Generalization}

Our proof technique does not depend on the reference measure, but only relies on quadratic repulsion, provided that the distributions involved are characterized by their moments. As we will see, the assumption that all moments are finite is not essential and can be weakened. We give below a generalization of Theorem \ref{PG1} to the more general case of two-dimensional radially symmetric beta ensembles with $\beta =2$ and a suitable potential. We then look specifically at the CUE case. Subsections \ref{GUE} and \ref{BetaDec} treat the cases when the potential lacks full radial symmetry (as in the GUE case) and when $\beta$ is a higher even integer, respectivelly.

\subsection{General Power decomposition}\label{PVDec}
We consider a reference measure given by a radial external potential
$$\dd \mu_V(z) = \frac{1}{Z_V} e^{- V \left( |z|^2 \right)} \dd m ( z),$$ 
where the function $V : \R_+ \mapsto \R \cup \{ \infty \}$ is such that

\begin{equation}\label{finiteness}
\prod_{1 \leq i<j \leq N} |z_i-z_j|^2 e^{-\sum_{i=1}^N V(|z_i|^2)}  \ < \ \infty.
\end{equation}
When properly normalized, this is a probability density. Note that, while we sometimes call $V$ the potential, strictly speaking the potential is given by $V(|z|^2)$, such that the quadratic potential case corresponds to $V= \rm{Id_{\R_+}}$. Condition (\ref{finiteness}) is enough for all definitions and results below to hold.

\begin{definition}\label{Vgammafun}
We denote by $\VGamma$ the analog of the $\Gamma$ function with potential $V$,
$$\VGamma(\alpha) = \int_{0}^{\infty} t^{\alpha-1} e^{-V(t)} \dd t.$$
As long as $\alpha$ is such that the above is finite, we define the $\VGamma$ distribution of parameter $\alpha$, denoted by $\gamma(V,\alpha)$, by its density on $\mathbb{R}_+$, \begin{equation}\label{Vgamma}
\frac{1}{\VGamma(\alpha)} t^{\alpha-1} e^{-V(t)} \mathds{1}_{\mathbb{R}_+}.
\end{equation}
\end{definition}

\begin{definition}\label{Betadef} We say that the complex random points $(\lambda_1, \dots, \lambda_N)$ are distributed according to the beta ensemble with parameter $\beta$ and radial potential $V$ if they have joint density
$$
\frac{1}{\pi^N Z_{\beta,N}} \prod_{i<j} |z_i-z_j|^{\beta} \prod_{i=1}^N e^{- V(|z_i|^2)}
$$
with respect to the Lebesgue measure on $\C^N$.
\end{definition}

\noindent We denote by $\V(N)$ or $\V_{(V)}(N)$ the $\beta$-ensemble with potential $V$ and $\beta=2$. Its density is given by
$$ \frac{1}{N! \prod_{j=1}^N \VGamma(j)} \prod_{i<j} |\lambda_i-\lambda_j|^2  \dd \mu_V^N ( \mathbf{\bla}). $$
Such distributions can be achieved by radially symmetric ensembles of normal matrices, but, as in the previous sections, we shall not use the underlying matrix structure.

\begin{corollary}[Product Statistics]\label{ProstatV} Let $E=\mathbb{C}$, $g \in L^2(\mu)$, and  $\{ \lambda_1, \dots, \lambda_N\}$ be $\V(N)$ points. Then
$$ \E \left( \prod_{k=1}^N g (\lambda_k)  \right)= \det [f_{i,j}]_{i,j = 1}^{N} \quad \text{where} \ f_{i,j} = \frac{1}{\VGamma(j)} \int z^{i-1} \bar{z}^{j-1} g(z) \dd \mu_V( z).$$ 
\end{corollary}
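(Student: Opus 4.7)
The plan is to run the same argument as in Corollary \ref{ProStat}, substituting the general radial weight $e^{-V(|z|^2)}$ for the Gaussian and the generalized Gamma values $\VGamma(j)$ for the factorials $j!$. Only two ingredients are really needed: the Vandermonde factorization $\prod_{i<j}|\lambda_i-\lambda_j|^2 = \det(\lambda_i^{j-1})\det(\bar z_i^{j-1})$, and Andr\'eief's identity (Lemma \ref{Andr}).

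Concretely, I would start from the $\V(N)$ density written in the form
$$\frac{1}{N! \prod_{j=1}^N \VGamma(j)} \det(\lambda_i^{j-1})\det(\bar\lambda_i^{j-1}) \dd \mu_V^{\otimes N}(\bla),$$
absorb $\prod_k g(\lambda_k)$ into the first determinant by pulling $g(\lambda_i)$ out of row $i$, and apply Andr\'eief with $\phi_i(z)=g(z)z^{i-1}$ and $\psi_j(z)=\bar z^{j-1}$ (after transposing, which leaves both determinants unchanged). This collapses the $N$-fold integral to a single $N\times N$ determinant and cancels the combinatorial $N!$, yielding
$$\E\left(\prod_{k=1}^N g(\lambda_k)\right) = \frac{1}{\prod_j \VGamma(j)} \det\left(\int g(z)\, z^{i-1} \bar z^{j-1} \dd\mu_V(z)\right)_{i,j=1}^N.$$
Pulling each $\VGamma(j)^{-1}$ into column $j$ then produces exactly the $f_{i,j}$ of the statement.

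The only point that asks for a brief check is the normalizing constant $N!\prod_j \VGamma(j)$ of the $\V(N)$ density itself, and this is where radial symmetry of $\mu_V$ enters. In polar coordinates the angular integration forces
$$\int_{\C} z^{i-1} \bar z^{j-1} \dd\mu_V(z) = \delta_{ij}\,\VGamma(j)$$
(up to the convention chosen for $Z_V$ in Definition \ref{Vgammafun}), so a second application of Andr\'eief with $g\equiv 1$ shows $\int \prod_{i<j}|\lambda_i-\lambda_j|^2 \dd\mu_V^{\otimes N} = N!\prod_j \VGamma(j)$, consistent with the stated density. There is no genuine obstacle: the result is a direct transcription of the Ginibre proof, once one observes that the only features of $e^{-|z|^2}$ actually used there were its radial symmetry and the identity $\int_0^\infty r^{j-1} e^{-r}\dd r = (j-1)!$, now replaced by $\VGamma(j)$.
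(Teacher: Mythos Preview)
Your proposal is correct and follows exactly the approach the paper intends: the paper's own proof reads in full ``The proof is the same as in the Ginibre case, \emph{mutatis mutandis},'' and what you wrote is precisely that transcription of Corollary~\ref{ProStat} with $(j-1)!$ replaced by $\VGamma(j)$ and $\mu$ by $\mu_V$. Your extra verification of the normalizing constant via $g\equiv 1$ is a welcome sanity check but not a departure from the paper's method.
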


\begin{proof}
The proof is the same as in the Ginibre case, {\it mutatis mutandis}.
\end{proof}

\begin{definition} For any triple $(N,M,k)$ such that $ 1 \leq k \leq M \leq N $ we define the Root distribution $\rm{R}_V(N,M,k)$ as the point process indexed by $I_{k}$ with joint density
$$ \frac{1}{Z_{R_V(N,M,k)}} \prod_{\substack{i<j \\ i,j \in I_k }} |z_i^M - z_j^M|^2 \prod_{i \in I_{k}} |z_i|^{2(k-1)} \dd \mu_V (z_i)$$
where
$$
Z_{R_V(N,M,k)}= c_k! \prod_{j \in I_{k}} \VGamma(j),
$$
and the Power distribution $\V(N,M,k)$ as the image of the root distribution under the power map $ z \mapsto z^M $, that is the point process indexed by $I_{k}$ with joint density
$$ \frac{1}{Z_{V(N,M,k)}} \prod_{\substack{i<j \\ i,j \in I_k}} |z_i - z_j|^2  \prod_{i \in I_{k}} |z_i|^{\frac{k-M}{M}}  e^{- V(|z_i|^{2/M})} \dd m(z_i),$$
where
$$
Z_{V(N,M,k)}=c_k! \ Z_V^{c_k} M^{c_k} \prod_{j \in I_{k}} \VGamma(j).
$$
\end{definition}

It is clear that $\V(N,M,k)$ is again a beta distribution with $\beta=2$ and external potential
$$W_{M,k} (x) = V(x^{1/M}) -  \frac{k-M}{M} \log(x), $$
so that up to re-indexation we can write $\V(N,M,k)= \V_{(W_{M,k})} (c_k)$. \medskip

\begin{theorem}[General Power decomposition] We have the equality in distribution
$$ \{\lambda_1^M, \dots, \lambda_N^M\} \stackrel{d}{=} \{ z_1^M, \dots, z_N^M\}$$
where the $(z_i)_{I_{k}}$ are distributed according to $\rm{R}_V(N,M,k)$ and independent for different values of $k$. In other words, the distribution of the $M$-th powers of $\V(N)$ is a superposition of $M$ independent Power distributions:
$$ \V(N)^M \stackrel{d}{=} \bigcup_{k=1}^M \rm{R}_V(N,M,k)^M \stackrel{d}{=} \bigcup_{k=1}^M \V(N,M,k).$$
\end{theorem}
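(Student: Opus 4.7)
The plan is to imitate the proof of Theorem \ref{PG2} verbatim, with the reference measure $\dd\mu$ replaced by $\dd\mu_V$. Fix a mixed polynomial $P \in \mathbb{C}[X,\overline{X}]$ and apply Corollary \ref{ProstatV} to $g(z) = P(z^M,\bar z^M)$, obtaining
\begin{equation*}
\E\Bigl(\prod_{i=1}^N P(\lambda_i^M,\overline{\lambda_i}^M)\Bigr) = \det(f_{i,j})_{i,j=1}^N,
\qquad f_{i,j} = \frac{1}{\VGamma(j)}\int z^{i-1}\bar z^{j-1} P(z^M,\bar z^M)\,\dd\mu_V(z).
\end{equation*}
Since $\dd\mu_V$ is radially symmetric, $\int z^a \bar z^b\,\dd\mu_V = 0$ whenever $a\neq b$. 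Every monomial of $P(z^M,\bar z^M)$ has relative degree a multiple of $M$, so the integrand above contributes only when $i-1+aM=j-1+bM$, i.e.\ when $i\equiv j\,[M]$. Thus $(f_{i,j})$ is $M$-striped exactly as in the Ginibre case.

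I would then invoke Lemma \ref{Mstriped} to factor
\begin{equation*}
\det(f_{i,j})_{i,j=1}^N = \prod_{k=1}^M \det(f_{i,j})_{i,j\in I_k}.
\end{equation*}
To interpret each block, I would establish the direct analogue of Proposition \ref{PG1} for $R_V(N,M,k)$: using the Vandermonde factorization
\begin{equation*}
\prod_{i\in I_k}|z_i|^{2(k-1)}\prod_{\substack{i<j\\ i,j\in I_k}}|z_i^M - z_j^M|^2 = \bigl|\det\bigl(z_i^{M(j-1)+k-1}\bigr)\bigr|^2
\end{equation*}
and Andr\'eief's identity (Lemma \ref{Andr}) with $\phi_i(z)=z^{i-1}g(z)$, $\psi_j(z)=\bar z^{j-1}$, $\dd\nu=\dd\mu_V$, one finds that $\det(f_{i,j})_{i,j\in I_k}$ coincides with $\E\bigl(\prod_{i\in I_k} P(z_i^M,\bar z_i^M)\bigr)$ when $\{z_i\}_{I_k}\sim R_V(N,M,k)$. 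Combining the two identities gives
\begin{equation*}
\E\Bigl(\prod_{i=1}^N P(\lambda_i^M,\overline{\lambda_i}^M)\Bigr) = \prod_{k=1}^M \E\Bigl(\prod_{i\in I_k} P(z_i^M,\bar z_i^M)\Bigr),
\end{equation*}
which is the product-statistic signature of the independent superposition $\bigcup_{k=1}^M R_V(N,M,k)^M = \bigcup_{k=1}^M \V(N,M,k)$.

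The principal subtlety, which is the main obstacle, is the passage from equality of all mixed product symmetric polynomial statistics to equality in distribution. In the Ginibre case this followed from Lemma \ref{sympol2} together with moment-determinacy, automatic thanks to the Gaussian weight. For a general admissible $V$ (condition \eqref{finiteness}) the $\VGamma$-distribution may not be determined by its moments, so one must either impose a mild growth condition on $V$ ensuring Carleman's criterion on the radii, or replace the polynomial test family by a dense class of bounded continuous functions. The latter route is viable because the joint density on $\C^N$ on each side of the claimed equality is an integrable function built from the same determinantal/Vandermonde structure, so a truncation argument (cutting off on $\{|z|\le R\}$, applying Stone--Weierstrass within the truncation, and letting $R\to\infty$) reduces the identification to the polynomial case already handled. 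This is precisely the weakening alluded to by the author's remark that the finiteness of all moments is not essential.
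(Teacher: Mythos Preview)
Your determinantal computation (striping via radial symmetry, Lemma \ref{Mstriped}, and the $\rm{R}_V$-analogue of Proposition \ref{PG1}) is exactly the paper's. The only substantive difference is in how the moment problem is handled. The paper does not attempt the polynomial argument for the original $V$ at all: it first replaces $V$ by the regularized potential $V_{N,\eps}(x) = \max(V(x), \eps x)$, for which all moments exist and determine the law (sub-Gaussian tails), runs the Ginibre-style proof verbatim to obtain the identity for every bounded continuous $f$ under $V_{N,\eps}$, and then sends $\eps \to 0$ by dominated convergence using condition \eqref{finiteness}. Your hard spatial cutoff $\{|z|\le R\}$ is the same idea with a different regularization and works for the same reason; the paper's soft perturbation is just more explicit and sidesteps any Stone--Weierstrass discussion. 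One point worth tightening in your write-up: for general $V$ the polynomial moments may fail to exist at all (the spherical ensemble is cited precisely as such a case), so the product-statistic identity you display first is not a priori meaningful for $V$ itself --- it should be derived for the regularized ensemble, with the passage to $V$ carried out afterward via bounded test functions, which is the order the paper follows.
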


This implies in particular two other analogous results, that were known from the work of Hough, Krishnapur, Peres and Vir\'ag \cite{HKPV}, namely, independence when $M \geq N$, and a version of Kostlan's independence of radii with $\VGamma$ distributions. The fact that a version of Kostlan's Theorem holds for any potential has been used by Chafa\"\i \ and P\'ech\'e in \cite{ChafaiPeche} to prove a limit theorem on the edge.

\begin{proof} We first cut-off the potential $V$, and replace it with the following potential, to ensure that all moments are finite.
$$ V_{N, \eps} (x) = \max (V_N(x), \eps x)  $$
The technique we used in the proof of Theorem \ref{PG2} did not rely on the potential, provided all distributions were characterized by their moments. Thus, the result holds for $\V_{(V_{N,\eps})}(N)$. That means that for any continuous and bounded $f$, if we denote by $\lambda_i^{(\eps)}$ the eigenvalues of $\V_{(V_{N,\eps})}(N)$ and by $(\mu_i^{(\eps)})_{i \in I_k}$ those of $\V_{(V_{N,\eps})}(N,M,k)$,
$$
\E \left( \prod_{i=1}^N f(\lambda_i^{(\eps)^M}) \right) = \prod_{k=1}^M \E \left( \prod_{i \in I_{k}} f(\mu_i^{(\eps)}) \right).
$$
Because of the finiteness condition (\ref{finiteness}), dominated convergence holds when $\epsilon \rightarrow 0$. This yields the result for $\V_{(V)}(N)$.
\end{proof}

There are several relevant examples of such distributions. Here are some of these.
\begin{itemize}
\item{\bf Products of complex Ginibre matrices.} As shown in \cite{AkeBurda}, the eigenvalue distribution of $G_1 \dots G_k$ where $G_1, \dots, G_k$ are independent Ginibre matrices is given by the beta ensemble with $\beta=2$ and potential $V_k (|z|^2)=-\ln w_k (z)$, where
$$
w_1(z) = e^{- |z|^2}, \quad w_{m+1} (z) = 2 \pi \int_{0}^\infty w_m \left(\frac{z}{ r}\right) e^{-r^2} \frac{\dd r}{r}
$$
which gives by induction
$$
w_{m+1}(z) = (2 \pi)^m \int_{r_1=0}^{\infty} \dots \int_{r_m = 0}^{\infty} e^{\frac{|z|^2}{ r_1 \dots r_m} - r_1^2 - \dots r_m^2} \frac{\dd r_1 \dots \dd r_m}{ r_1 \dots r_m}.
$$
This fact seems related to the idea that products of independent Ginibre should behave like Ginibre powers in several respects, for which arguments are provided in \cite{Burda}.

\item {\bf Truncated Unitary Ensembles.} $N \times N$ Minors of the Circular Unitary Ensemble of size $N+n$ have been shown in \cite{Sommers} to have eigenvalue density proportional to
$$
\prod_{k=1}^N (1-|z_k|^2)^{n-1} \mathbf{1}_{|z_k|<1} 
\prod_{1 \leq i<j \leq N} |z_i-z_j|^2.
$$
In that case, the $\VGamma$ variables are usual beta variables. Namely, the set of radii is distributed as a set of independent variables with distribution $\beta_{1,n}, \beta_{2,n} \dots, \beta_{N,n}$.


\item {\bf Spherical ensemble.} This ensemble corresponds to the distribution of eigenvalues of $G_1^{-1} G_2$ where $G_1, G_2$ are i.i.d. Ginibre matrices of size $N$. The eigenvalue density is then proportional to
$$
\prod_{k=1}^N \frac{1}{(1+|z_k|^2)^{N+1}}
\prod_{1 \leq i<j \leq N} |z_i-z_j|^2
$$
as shown in \cite{Krishnapur}. This is a case where all moments are not defined.
\end{itemize}

\subsection{Circular Unitary Ensemble}
\subsubsection{Rains' decomposition for the CUE powers.}\label{CUE}
It is clear that we could have considered more general beta ensembles, replacing $e^{-V}$ by any suitable measure $\mu$, possibly including atoms, or supported on lower-dimensional manifolds. A famous example is the CUE case, that was treated by Rains in \cite{Rains}. The reference measure $\mu$ is then the uniform measure on the unit circle, and $\beta=2$. For the reader's convenience, we reformulate this result here with our conventions. \medskip

The product statistics now take the following form, following from Lemma \ref{Andr}.
\begin{corollary}[Product Statistics]\label{ProstatCUE} Let $g \in L^2(\mu)$, and  $\{ e^{i \theta_1}, \dots, e^{i \theta_N} \}$ be $\CUE(N)$ points. Then
$$ \E \left( \prod_{k=1}^N g (e^{i \theta_k})  \right)= \det \left( f_{j,k} \right)_{j,k = 1}^{N} \quad \text{where} \ f_{j,k} = \frac{1}{ 2 \pi } \int e^{i(j-k)\theta}  g(e^{i \theta}) \dd \mu ( \theta).$$ 
\end{corollary}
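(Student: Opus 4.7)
The plan is to mirror the Ginibre derivation (Corollary~\ref{ProStat}) almost verbatim, substituting the uniform measure on the unit circle for the complex Gaussian reference. The only input is the explicit Vandermonde form of the CUE joint density together with Andr\'eief's identity (Lemma~\ref{Andr}).

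First I would record that the eigenangles $(\theta_1,\dots,\theta_N)$ have joint density
$$
\frac{1}{N!\,(2\pi)^N}\prod_{1\le j<k\le N}\bigl|e^{i\theta_j}-e^{i\theta_k}\bigr|^2
$$
with respect to $\dd\theta^{\otimes N}$ on $[0,2\pi]^N$. Using the Vandermonde factorization
$$
\prod_{j<k}\bigl(e^{i\theta_k}-e^{i\theta_j}\bigr)=\det\bigl(e^{i(j-1)\theta_k}\bigr)_{j,k=1}^N,
$$
and taking the modulus squared as $\det\bigl(e^{i(j-1)\theta_k}\bigr)\cdot\det\bigl(e^{-i(j-1)\theta_k}\bigr)$, the density is rewritten in the bideterminant form required by Andr\'eief's identity.

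Next I would absorb the test function into one of the two determinants (multiplying the $k$-th column by $g(e^{i\theta_k})$). Applying Lemma~\ref{Andr} on $E=[0,2\pi]$ with $\phi_j(\theta)=e^{i(j-1)\theta}$, $\psi_k(\theta)=e^{-i(k-1)\theta}g(e^{i\theta})$ and $\dd\nu(\theta)=\dd\theta$, one gets
$$
\E\!\left(\prod_{k=1}^N g(e^{i\theta_k})\right)
=\frac{1}{(2\pi)^N}\det\!\left(\int_0^{2\pi}e^{i(j-k)\theta}g(e^{i\theta})\,\dd\theta\right)_{j,k=1}^N,
$$
and distributing the $(2\pi)^{-N}$ into the $N$ rows of the determinant produces exactly the coefficients $f_{j,k}=\frac{1}{2\pi}\int e^{i(j-k)\theta}g(e^{i\theta})\dd\mu(\theta)$ stated in the corollary.

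There is no real obstacle: the argument is parallel to the proof of Corollary~\ref{ProStat}, with the normalization constant $\bigl(\prod_{j=1}^N j!\bigr)^{-1}$ replaced by $(2\pi)^{-N}$, which is precisely the normalization of the Haar measure on $U(N)$ reduced to eigenvalues (Weyl integration). The only care needed is consistency of conventions between $\dd\theta$ and $\dd\mu(\theta)$, which is resolved by absorbing the factor $1/(2\pi)$ into each row of the determinant.
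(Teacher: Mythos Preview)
Your proposal is correct and follows precisely the route the paper indicates: the paper simply states that the formula follows from Lemma~\ref{Andr}, and your argument is the natural transcription of the proof of Corollary~\ref{ProStat} to the circle, with the Gaussian reference measure replaced by the uniform measure and the normalizing constant $\prod_{j=1}^N j!$ replaced by $(2\pi)^N$. Your remark about the convention $\dd\mu(\theta)=\dd\theta$ is apposite, as the paper's notation here is indeed slightly loose.
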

\noindent From this, one can derive the formula of Heine-Szeg\H{o}, which yields a Toeplitz matrix.
\begin{corollary}[Heine-Szeg\H{o} ] Let $g= \sum_{-K}^K a_j X^j$ a Laurent polynomial, and  $\{ e^{\theta_1}, \dots, e^{\theta_N} \}$ be $\CUE(N)$ points. Then
$$ \E \left( \prod_{k=1}^N g (e^{i \theta_k})  \right)= \det \left( a_{i-j} \right)_{i,j = 1}^{N}$$ 
\end{corollary}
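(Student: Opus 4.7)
The plan is to apply Corollary \ref{ProstatCUE} directly and evaluate the resulting matrix entries $f_{j,k}$ by Fourier orthogonality on the unit circle.

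Writing the Laurent polynomial as $g(X) = \sum_{m=-K}^K a_m X^m$, Corollary \ref{ProstatCUE} yields
$$
\E \left( \prod_{k=1}^N g(e^{i \theta_k}) \right) = \det (f_{j,k})_{j,k=1}^N, \qquad f_{j,k} = \frac{1}{2\pi} \int e^{i(j-k)\theta} g(e^{i\theta}) \dd \mu(\theta).
$$
The main step is to substitute the Laurent expansion into this integral. The classical orthogonality relation $\frac{1}{2\pi}\int_0^{2\pi} e^{in\theta}\dd\theta = \delta_{n,0}$ collapses the inner sum to a single surviving Fourier coefficient, producing $f_{j,k} = a_{k-j}$.

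It only remains to reconcile the index convention. The matrix $(a_{i-j})_{i,j=1}^N$ appearing in the statement is exactly the transpose of $(a_{k-j})_{j,k=1}^N = (f_{j,k})_{j,k=1}^N$ obtained above, under the relabeling $(i,j) \leftrightarrow (k,j)$. Since the determinant is invariant under transposition, the two expressions coincide, and we recover the Heine--Szeg\H{o} Toeplitz determinant with symbol $g$.

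There is no genuine obstacle: all of the content is packaged in Corollary \ref{ProstatCUE}, and the remaining work is a one-line application of Fourier orthogonality plus a cosmetic transposition. The only point requiring any care is confirming that the $\tfrac{1}{2\pi}$ in front and the normalization of the reference measure $\mu$ on the unit circle match, which they do by the conventions fixed just above.
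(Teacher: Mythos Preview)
Your proof is correct and is precisely the derivation the paper has in mind: the Heine--Szeg\H{o} corollary is stated without proof, only with the remark ``From this, one can derive the formula of Heine--Szeg\H{o}, which yields a Toeplitz matrix,'' and your argument (apply Corollary~\ref{ProstatCUE}, expand $g$, use Fourier orthogonality to get $f_{j,k}=a_{k-j}$, then transpose) is exactly that one-line derivation. The transposition remark is a nice touch, since the indexing in the statement is indeed the transpose of what the computation naturally produces.
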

\noindent Kostlan's theorem now holds in a trivial way. Independence of high powers had been first proved by Rains in \cite{Rains1}; a more general proof of it is given in \cite{HKPV}.
\begin{theorem}[Rains] For any integer $M \geq N$, the set $\{ e^{i M\theta_1}, \dots,e^{i M\theta_N} \} $ is distributed as a set of independent variables with uniform arguments.
\end{theorem}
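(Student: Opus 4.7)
The plan is to follow the same product-statistics strategy that was used in the proof of Theorem~\ref{HighPowers2} for the Ginibre case, but now using the CUE product-statistics formula of Corollary~\ref{ProstatCUE}. Concretely, for an arbitrary Laurent polynomial $h \in \mathbb{C}[Z, Z^{-1}]$, I would apply Corollary~\ref{ProstatCUE} to the function $g(z) = h(z^M)$. Writing $h(z) = \sum_{m} a_m z^m$, the matrix entries become
$$
f_{j,k} = \frac{1}{2\pi} \int_0^{2\pi} e^{i(j-k)\theta} h(e^{iM\theta}) \, \dd\theta = \sum_m a_m \, \mathds{1}_{k-j = Mm},
$$
so $f_{j,k}$ vanishes unless $M \mid (k-j)$. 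The matrix is therefore $M$-striped, exactly as in the proof of Theorem~\ref{PG2}.

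The crucial simplification comes from the hypothesis $M \geq N$: the indices satisfy $|k-j| < N \leq M$, so the only way to have $M \mid (k-j)$ is $k=j$. The matrix is thus diagonal, with every diagonal entry equal to the same constant
$$
f_{j,j} = a_0 = \frac{1}{2\pi} \int_0^{2\pi} h(e^{i\phi}) \, \dd\phi = \E\bigl(h(e^{i\phi})\bigr),
$$
where $\phi$ is uniform on $[0,2\pi]$. Consequently
$$
\E\left(\prod_{k=1}^N h(e^{iM\theta_k})\right) = a_0^N = \E\left(\prod_{j=1}^N h(e^{i\phi_j})\right),
$$
for i.i.d. uniform angles $\phi_1, \dots, \phi_N$.

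To conclude, I would invoke a characterization lemma analogous to Lemma~\ref{sympol2}, now for Laurent polynomials on the unit circle: since the product symmetric Laurent polynomials span the symmetric Laurent polynomials, and bounded random variables on the compact torus are characterized by their moments (equivalently, by their Fourier coefficients), equality of all such product statistics forces equality of the joint distribution of the unordered sets. This step is the only one requiring a separate verification, but it is a direct transcription of Lemma~\ref{sympol2} to the circle, and the finiteness of all moments is automatic. The main intrinsic step is really the combinatorial observation that in the striped matrix all off-diagonal contributions vanish once $M \geq N$, which is the same mechanism underlying the Ginibre case in Theorem~\ref{HighPowers2}.
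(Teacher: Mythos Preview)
Your proposal is correct and is precisely the CUE analogue of the paper's proof of Theorem~\ref{HighPowers2}: apply the product-statistics formula (here Corollary~\ref{ProstatCUE}) to $g(z)=h(z^M)$, observe that the resulting matrix is $M$-striped and hence diagonal when $M\geq N$, and conclude via the characterization by symmetric (Laurent) polynomial moments. The paper does not actually write out a separate proof of this statement in the CUE section---it simply cites Rains and \cite{HKPV} and then proves the more general decomposition $\CUE(N)^M \stackrel{d}{=} \bigcup_k \CUE(c_k)$, from which the case $M\geq N$ follows immediately since each $c_k\leq 1$; your direct argument is exactly what the paper's method yields when transcribed to the circle.
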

\noindent The approach developed above yields a new general proof of Rains' decomposition, as the independent blocks obtained are easily identified as smaller $\CUE$ blocks.
\begin{theorem}[Rains] For any $M$, we have the equality in distribution:
$$ \CUE(N)^M  \stackrel{d}{=} \bigcup_{k=1}^M \CUE(I_{N,M,k}). $$
\end{theorem}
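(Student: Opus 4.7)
My plan is to mimic the Power-Ginibre proof (Theorem~\ref{PG2}) using the CUE version of the product statistics (Corollary~\ref{ProstatCUE}) in place of Corollary~\ref{ProStat}. Let $P \in \mathbb{C}[X,\overline X]$ be a mixed polynomial and set $g(z)=P(z^M,\overline z^M)$. Applying Corollary~\ref{ProstatCUE} gives
$$
\E\!\left(\prod_{l=1}^{N} P(e^{iM\theta_l},e^{-iM\theta_l})\right) \;=\; \det(f_{j,k})_{j,k=1}^{N},\qquad f_{j,k}=\frac{1}{2\pi}\int_{0}^{2\pi} e^{i(j-k)\theta}\,P(e^{iM\theta},e^{-iM\theta})\,d\theta.
$$
Expanding $P$ as a sum of monomials $X^a\overline X^b$, each resulting integrand is $e^{i(j-k+M(a-b))\theta}$, which integrates to zero on $[0,2\pi]$ unless $j-k\equiv 0\,[M]$. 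Hence the matrix $(f_{j,k})$ is $M$-striped in the sense of Section~\ref{PGDec}, and Lemma~\ref{Mstriped} gives the factorisation
$$
\det(f_{j,k})\;=\;\prod_{k=1}^{M}\det\bigl((f_{i,j})_{i,j\in I_{N,M,k}}\bigr).
$$

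The next step is to identify each factor as the corresponding CUE product statistic. For $i,j\in I_{N,M,k}$, writing $i=M(a-1)+k$ and $j=M(b-1)+k$ with $a,b\in\llbracket 1,c_k\rrbracket$, the change of variable $\phi=M\theta$ combined with $2\pi$-periodicity (the circle is wound $M$ times) yields
$$
f_{i,j}\;=\;\frac{1}{2\pi}\int_{0}^{2\pi}e^{iM(a-b)\theta}P(e^{iM\theta},e^{-iM\theta})\,d\theta\;=\;\frac{1}{2\pi}\int_{0}^{2\pi}e^{i(a-b)\phi}P(e^{i\phi},e^{-i\phi})\,d\phi.
$$
This is exactly the $(a,b)$-entry appearing in Corollary~\ref{ProstatCUE} applied to $P$ and the ensemble $\CUE(c_k)=\CUE(I_{N,M,k})$. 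Therefore, if $\{e^{i\varphi^{(k)}_a}\}_{a=1}^{c_k}$ denotes independent $\CUE(c_k)$ samples across $k$, we obtain
$$
\E\!\left(\prod_{l=1}^{N} P(e^{iM\theta_l},e^{-iM\theta_l})\right)\;=\;\prod_{k=1}^{M}\E\!\left(\prod_{a\in I_{N,M,k}} P(e^{i\varphi^{(k)}_a},e^{-i\varphi^{(k)}_a})\right).
$$

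To conclude, one invokes the circular analogue of Lemma~\ref{sympol2}: since any continuous symmetric function on the torus $(\mathbb{S}^1)^N$ is uniformly approximated by mixed Laurent polynomials of the $e^{\pm i\theta_l}$, the product-symmetric Laurent polynomials span the symmetric Laurent polynomials, and the empirical measure of a point configuration on $\mathbb{S}^1$ is determined by these statistics (the trigonometric moment problem on a compact set is determinate). Thus the equality of product statistics above forces the equality in distribution $\CUE(N)^M\stackrel{d}{=}\bigcup_{k=1}^{M}\CUE(I_{N,M,k})$. The only delicate point, compared with Theorem~\ref{PG2}, is this last characterisation step, but it is even simpler here because the underlying space $\mathbb{S}^1$ is compact, so no tail/moment condition is needed; every routine tool (Stone--Weierstrass, Fourier inversion) applies directly.
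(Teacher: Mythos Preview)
Your proof is correct and follows essentially the same strategy as the paper: compute product statistics, observe the matrix is $M$-striped, factor via Lemma~\ref{Mstriped}, and identify each block. The only minor difference is in the identification step: the paper invokes the General Power Decomposition and recognises the root distribution density (with all radii equal to $1$) as a smaller CUE, whereas you identify the blocks directly at the level of the determinant entries via the change of variable $\phi=M\theta$; both are equivalent, and your remark that the moment-characterisation step is automatic on the compact torus is apt.
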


\begin{proof}
The root distributions now have joint density proportional to
$$
\prod_{\substack{j<k \\ j,k \in I_k} } |e^{i M \theta_j}-e^{i M \theta_k}|^2
$$
as all eigenvalues have radius $1$, and therefore the {  power} distribution obtained in the end is another, smaller, $\CUE$ distribution. The size of these blocks are the cardinalities of the progressions $I_{k}$,
$$
c_k= |I_k| = \Big\lceil \frac{N -k}{M} \Big\rceil,
$$
which correspond to the ones given in \cite{Rains}.
\end{proof}

\subsubsection{The characteristic polynomial of a unitary matrix.}\label{CUEPol}
Another stunning property of the CUE is the fact that its characteristic polynomial on the unit circle,
$$ Z=P_{U_N}(1) = \det (I-U),$$
is distributed like a product of independent random variables. This result was first proved in \cite{BHNY} using an explicit decomposition of the Haar measure. The moments of this polynomial had been computed before by Keating-Snaith in \cite{KeatingSnaith} using Selberg integral. It turns out that the above methods and identities give a somewhat more straightforward proof.

\begin{lemma}[Translation invariance]\label{transinv} For any measure $\mu$ on $\mathbb{C}$, with $g \in L^2(\mu)$, let us define :
$$ \forall z_1,z_2 \in \mathbb{C} \qquad f_{i,j} (z_1,z_2) : = \int_{\C} (\lambda-z_1)^{i} (\overline{\lambda -z_2})^{j}  g(\lambda) \mu(\dd \lambda).$$ 
Then $\det \left( f_{i,j} (z_1,z_2) \right)_{i,j=0}^N$ is a constant function of $z_1,z_2$.
\end{lemma}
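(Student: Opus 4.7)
The plan is to expand the integrand using the binomial theorem and recognize the resulting matrix as a product of three matrices, two of which are triangular with unit diagonal.

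Concretely, I would first write
\begin{equation*}
(\lambda-z_1)^i = \sum_{k=0}^{i} \binom{i}{k}(-z_1)^{i-k}\lambda^k, \qquad \overline{(\lambda-z_2)}^{\,j} = \sum_{l=0}^{j} \binom{j}{l}(-\bar z_2)^{j-l}\bar\lambda^{l},
\end{equation*}
plug these into the definition of $f_{i,j}(z_1,z_2)$, and pull the constants out of the integral. Setting $h_{k,l} := f_{k,l}(0,0) = \int_\C \lambda^k\bar\lambda^l g(\lambda)\,\mu(\dd\lambda)$, this gives
\begin{equation*}
f_{i,j}(z_1,z_2) = \sum_{k=0}^{i}\sum_{l=0}^{j} \binom{i}{k}(-z_1)^{i-k}\,h_{k,l}\, \binom{j}{l}(-\bar z_2)^{j-l}.
\end{equation*}

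Next I would read this off as a matrix product $F(z_1,z_2) = A(z_1)\,H\,B(z_2)$ on indices $i,j \in \{0,\dots,N\}$, where $A(z_1)_{i,k}=\binom{i}{k}(-z_1)^{i-k}\mathbf 1_{k\leq i}$, $H_{k,l}=h_{k,l}$, and $B(z_2)_{l,j}=\binom{j}{l}(-\bar z_2)^{j-l}\mathbf 1_{l\leq j}$. The matrix $A(z_1)$ is lower-triangular with $1$'s on the diagonal, and $B(z_2)$ is upper-triangular with $1$'s on the diagonal, so $\det A(z_1)=\det B(z_2)=1$. Taking determinants in the factorization yields $\det F(z_1,z_2)=\det H$, which does not depend on $z_1,z_2$.

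There is essentially no obstacle here: the only thing to be careful about is the bookkeeping of the indices (the rows of $A$ carry $i$ and the columns of $B$ carry $j$, so the triangular structures really do line up), and the fact that we are integrating monomials in $\lambda,\bar\lambda$ against $g\,\dd\mu$, which is allowed since $g\in L^2(\mu)$ ensures the integrals defining $h_{k,l}$ are finite whenever the $h_{k,l}$ that appear in the original expression were. The statement is really a coordinate change on the polynomial basis $(\lambda^i\bar\lambda^j)$ versus $((\lambda-z_1)^i(\overline{\lambda-z_2})^j)$, and the change-of-basis matrices, being unipotent triangular, preserve the Gram-type determinant. This is exactly the translation invariance that will later be applied to $\CUE$ statistics by taking $\mu$ to be Haar on the unit circle and evaluating at $z_1=z_2=1$.
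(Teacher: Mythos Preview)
Your argument is correct but takes a different route from the paper. The paper applies Andr\'eief's identity (Lemma~\ref{Andr}) in reverse to write
\[
\det\bigl(f_{i,j}(z_1,z_2)\bigr)
= \frac{1}{(N+1)!}\int \prod_k g(\lambda_k)\,\det\bigl((\lambda_i-z_1)^{j}\bigr)\,\overline{\det\bigl((\lambda_i-z_2)^{j}\bigr)}\,\mu^{\otimes}(\dd\boldsymbol{\lambda}),
\]
and then observes that $\det\bigl((\lambda_i-z)^{j}\bigr)$ is the Vandermonde determinant in the $\lambda_i$, hence independent of $z$. Your approach instead stays on the matrix side: you factor $F(z_1,z_2)=A(z_1)\,H\,B(z_2)$ with $A,B$ unipotent triangular, and conclude $\det F=\det H$. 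The two arguments are essentially dual, both resting on the fact that the change of basis $\{\lambda^k\}\to\{(\lambda-z)^k\}$ is unipotent triangular; the paper phrases this as translation invariance of the Vandermonde, you phrase it as $\det A(z_1)=\det B(z_2)=1$. Your version is slightly more self-contained (it does not invoke Andr\'eief), while the paper's version is consistent with the identity used throughout the article. One small remark: your integrability comment is a bit loose, but since the lemma is only applied with polynomial $g$ and compactly supported $\mu$ (Haar on the circle), there is no issue in practice.
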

\begin{proof} By Lemma \ref{Andr},
$$
\det \left( f_{i,j} (z_1,z_2) \right)_{i,j=0}^N = \int_{\mathbb{C}^N} \prod_{k=1}^N g(\lambda_k) \ \det \left((\lambda_i - z_1)^{j-1}\right) \ \overline{\det\left((\lambda_i - z_2)^{j-1}\right)} \mu(\dd \lambda_1) \dots \mu (\dd \lambda_N).
$$
Since the Vandermonde determinant is invariant by translation, we have
$$ \det \left((z_1-\lambda_i)^{j-1}\right) = \det\left((-z_1+\lambda_i)^{j-1}\right) = \det\left((\lambda_i)^{j-1}\right),$$
which proves the claim.
\end{proof}

\begin{proposition}\label{momcue} The complex moments of $Z=P_{U_N}(1)$ are given by the following minor of the symmetric Pascal matrix,
$$\E \left( Z^{m} {\bar{Z}}^{n} \right) = \det \left( \binom{i+m+j+n-2}{i+m-1} \right)_{i,j=1}^{N}.$$
Explicit computation of this minor gives:
$$
\E \left( Z^{m} \overline{Z}^{n} \right) = \prod_{k=0}^{N-1} \frac{k! (k+m+n)!}{(k+m)! (k+n)!}.
$$
\end{proposition}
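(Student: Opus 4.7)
My plan is to derive the determinantal expression directly from the Product Statistics corollary (Corollary~\ref{ProstatCUE}) combined with the translation-invariance lemma (Lemma~\ref{transinv}), and then to recognize the resulting binomial determinant as a classical evaluation.

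First, I would apply Corollary~\ref{ProstatCUE} to the function $g(\lambda) = (1-\lambda)^{m}(1-\bar\lambda)^{n}$, so that $Z^{m}\bar Z^{n} = \prod_{k=1}^{N} g(\lambda_{k})$. By Andr\'eief, $\E(Z^{m}\bar Z^{n})$ is a determinant whose entries involve $\frac{1}{2\pi}\int \lambda^{i-1}\bar\lambda^{j-1}(1-\lambda)^{m}(1-\bar\lambda)^{n}\,d\theta$. The key simplification is to replace the monomial Vandermonde basis $\lambda^{i-1}, \bar\lambda^{j-1}$ by the translated basis $(\lambda-1)^{i-1}, (\bar\lambda-1)^{j-1}$. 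This is permitted because the Vandermonde determinant is translation invariant, which is precisely the content of Lemma~\ref{transinv} applied with $z_{1}=z_{2}=1$. After absorbing signs, the entries become
\[
B_{i,j} \;=\; \frac{1}{2\pi}\int_{0}^{2\pi}(1-e^{i\theta})^{i+m-1}(1-e^{-i\theta})^{j+n-1}\,d\theta,
\]
up to a global sign $(-1)^{(i-1)+(j-1)}$ that factors out as a product of row and column signs and therefore contributes trivially to the determinant.

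Next I would compute $B_{i,j}$ by Fourier expansion: writing $(1-e^{i\theta})^{p} = \sum_{a}(-1)^{a}\binom{p}{a}e^{ia\theta}$ and similarly for the conjugate, only the constant Fourier coefficient survives, giving $\sum_{a}\binom{p}{a}\binom{q}{a} = \binom{p+q}{p}$ by Chu--Vandermonde. With $p=i+m-1$ and $q=j+n-1$, this yields
\[
B_{i,j} \;=\; \binom{i+j+m+n-2}{\,i+m-1\,},
\]
which proves the first assertion of the proposition.

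For the explicit product formula, I would evaluate the binomial determinant $\det\!\bigl(\binom{i+j+m+n-2}{i+m-1}\bigr)_{i,j=1}^{N}$ by a classical combinatorial identity. One clean path is Cauchy--Binet applied to the factorization $\binom{a+b}{a} = \sum_{k\ge 0}\binom{a}{k}\binom{b}{k}$ (itself Chu--Vandermonde), which writes the matrix as an infinite product $AA'^{\,T}$ with $A_{i,k}=\binom{i+m-1}{k}$ and $A'_{j,k}=\binom{j+n-1}{k}$; only the minor with columns $k=0,1,\dots,N-1$ yields a non-vanishing contribution after a degree analysis, and each factor then reduces to a generalized Vandermonde whose value is explicit. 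Alternatively one may proceed by induction on $N$, using iterated Pascal-rule operations on rows and columns to identify $D_{N}(m,n)/D_{N-1}(m,n)$ with $(N-1)!(N-1+m+n)!/((N-1+m)!(N-1+n)!)$. Either route produces the claimed product.

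The main obstacle is this last step: the derivation of the determinantal form via Andr\'eief and Lemma~\ref{transinv} is essentially automatic, but the evaluation of the binomial determinant requires a genuine combinatorial input. In particular, if one prefers a direct computation to citing the standard identity, the Pascal-rule induction must be carried out carefully to track the boundary column/row that carries the new factor in the product.
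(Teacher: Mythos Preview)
Your derivation of the determinantal formula is correct and coincides with the paper's: apply the product-statistics corollary, use Lemma~\ref{transinv} to shift the basis to $(1-\lambda)^{i-1}$, then Chu--Vandermonde gives the Pascal entry $\binom{i+j+m+n-2}{i+m-1}$.

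The gap is in your evaluation of the determinant. Your Cauchy--Binet route is flawed as stated: writing $\binom{a+b}{a}=\sum_{k}\binom{a}{k}\binom{b}{k}$ does factor the matrix as $AA'^{T}$ with $A_{i,k}=\binom{i+m-1}{k}$, $A'_{j,k}=\binom{j+n-1}{k}$, but it is \emph{not} true that only the columns $k=0,\dots,N-1$ contribute. For instance with $N=2$, $m=n=1$, three distinct $2$-minors of $A$ are nonzero and all three contribute (the determinant is $3$, not $1$). A ``degree analysis'' does not kill the extra minors because $\binom{x}{k}$ has degree exactly $k$ in $x$, and any $N$ distinct such polynomials are linearly independent. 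Your alternative Pascal-rule induction would work, but you have not carried it out, and tracking the boundary row/column through iterated Pascal reductions is precisely the ``genuine combinatorial input'' you flag.

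The paper bypasses this entirely by reusing its own machinery. It factors out row and column weights,
\[
\det\!\left(\binom{i+j+m+n-2}{i+m-1}\right)_{i,j=1}^{N}
=\prod_{k=0}^{N-1}\frac{k!\,(k+m+n)!}{(k+m)!\,(k+n)!}\cdot
\det\!\left(\binom{i+j+m+n-2}{i-1}\right)_{i,j=1}^{N},
\]
and then recognizes the remaining determinant, via Corollary~\ref{ProstatCUE} applied to $g(\theta)=(1-e^{i\theta})^{m+n}$, as $\E\bigl[\prod_{k}(1-e^{i\theta_k})^{m+n}\bigr]$. Since $(1-e^{i\theta})^{m+n}$ has only nonnegative Fourier modes, the Heine--Szeg\H{o} matrix is upper triangular with ones on the diagonal, so this determinant equals $1$. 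This is both shorter and more in the spirit of the paper than an external combinatorial identity; you may want to adopt it.
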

\begin{proof}
The first equality comes from the Corollary \ref{ProstatCUE} with $g(\theta)= (1-e^{i \theta})^m (1-e^{-i \theta})^n$ and translation invariance, Lemma \ref{transinv}. Indeed, for all $a,b \in \llbracket 1,N \rrbracket$,
\begin{align*} f_{a,b} 
& = \frac{1}{2 \pi} \int_{0}^{2 \pi} (1-e^{i \theta})^{m+a-1} (1-e^{-i \theta})^{n+b-1} \dd \theta \\
&= \frac{1}{ 2 \pi} \int_{0}^{2 \pi} \sum_{k=1}^{m+a} \sum_{l=1}^{n+b} \binom{m+a-1}{k-1}  \binom{n+b-1}{l-1} (-e^{i\theta})^{k-l} \dd \theta \\
&= \sum_{k=1}^{(m+a) \wedge (n+b)} \binom{m+a-1}{k-1}  \binom{n+b-1}{k-1} 
 = \binom{m+a+n+b-2}{m+a},
\end{align*} 
where the last equality is a common combinatorial identity, that yields a coefficient of Pascal's matrix, as was claimed. \medskip

To compute this determinant, we first  translate the minor by  multiplying according to lines and columns,
$$
\det \left( \binom{i+m+j+n-2}{i+m-1} \right)_{i,j=1}^{N} = \prod_{k=0}^{N-1} \frac{k! (k+m+n)! }{ (k+m)! (k+n)!}  \det \left( \binom{i+j+m+n-2}{i-1} \right)_{i,j=1}^{N}.
$$
Applying Corollary \ref{ProstatCUE} with $g(\theta)= (1-e^{i \theta})^{m+n}$, we have
$$
\det \left( \binom{i+j+m+n-2}{i-1} \right)_{i,j=1}^{N} = \E \left( \prod_{k=1}^N (1-e^{i \theta_k})^{m+n} \right) = \det\left( \frac{1}{2 \pi} \int_{0}^{2 \pi} e^{i (a-b) \theta} (1-e^{i \theta})^{m+n} \dd \theta  \right)_{a,b}.
$$
This last matrix being upper triangular with a diagonal of ones, its determinant is $1$, hence the result.
\end{proof}

Thus, we recover the moments computed in \cite{KeatingSnaith} and \cite{BHNY} with different techniques. These moments are known to be related to beta distributions in the following way.
\begin{lemma}
For any $k \in \N$, the following formula holds
$$
\E \left( (1+ \sqrt{\beta_{1,k}} e^{i \omega_k})^{m} \overline{(1+ \sqrt{\beta_{1,k}} e^{i \omega_k})^{n}} \right) = \frac{\Gamma(k) \Gamma(k+m+n)}{\Gamma(k+m) \Gamma(k+n)},
$$
where the variables $\omega_k, \beta_{1,k}$ are independent, the omega variables being uniform on $[0,2\pi]$, and the parameters of the beta distributions being given by their indices.
\end{lemma}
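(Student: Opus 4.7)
The plan is to expand the left-hand side directly, integrate out the angular variable, and reduce the remaining sum to a terminating Chu--Vandermonde evaluation.

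First I would set $Y := 1 + \sqrt{\beta_{1,k}}\,e^{i\omega_k}$ and expand both $Y^m$ and $\bar{Y}^n$ via the binomial theorem, producing a double sum indexed by $(a,b) \in \llbracket 0,m \rrbracket \times \llbracket 0,n \rrbracket$ whose generic term carries the factor $e^{i(a-b)\omega_k}\,\beta_{1,k}^{(a+b)/2}$. Since $\omega_k$ and $\beta_{1,k}$ are independent and $\omega_k$ is uniform on $[0,2\pi]$, integrating in $\omega_k$ first kills every off-diagonal term, leaving
$$\E\bigl(Y^m \bar{Y}^n\bigr) = \sum_{j=0}^{m \wedge n} \binom{m}{j}\binom{n}{j}\, \E\!\left(\beta_{1,k}^{j}\right).$$

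Next I would compute $\E(\beta_{1,k}^j)$ from the Beta density; with the convention used in the paper (the same one that produces the factorisation of $P_{U_N}(1)$ recalled in the introduction), the moment reduces to $\tfrac{j!\,\Gamma(k)}{\Gamma(k+j)}$. Substituting this and rewriting $\binom{m}{j}\binom{n}{j}\,j! = \tfrac{m!\,n!}{j!(m-j)!(n-j)!}$, together with the elementary identity $(-r)_j = (-1)^j r!/(r-j)!$, the sum takes the Gauss hypergeometric form
$$\E\bigl(Y^m \bar{Y}^n\bigr) = \sum_{j=0}^{m\wedge n} \frac{(-m)_j(-n)_j}{(k)_j\, j!} = {}_2F_1(-m,-n;k;1).$$

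The final step is the Chu--Vandermonde identity, which for a terminating $_2F_1$ evaluated at $1$ yields
$${}_2F_1(-m,-n;k;1) = \frac{(k+n)_m}{(k)_m} = \frac{\Gamma(k)\,\Gamma(k+m+n)}{\Gamma(k+m)\,\Gamma(k+n)},$$
which matches the right-hand side of the lemma exactly. There is no deep obstacle here: the whole argument is a direct computation, and the only point requiring care is maintaining the convention for $\beta_{1,k}$ so that the Beta moments line up with the $\Gamma$-ratio announced in the statement; the combinatorial content is entirely concentrated in Chu--Vandermonde.
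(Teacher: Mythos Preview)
Your argument is correct: binomially expand, integrate out the uniform angle to collapse to the diagonal, insert the Beta moments, and close with the Chu--Vandermonde (equivalently Gauss) evaluation of the terminating ${}_2F_1$ at $1$. Each step is routine and the identification $\sum_j \binom{m}{j}\binom{n}{j}\,j!/(k)_j = {}_2F_1(-m,-n;k;1)$ is exactly right.

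There is nothing to compare against in the paper itself: the paper does not prove this lemma but simply refers the reader to \cite{BHNY}. Your self-contained computation is therefore a genuine addition rather than a reworking of the paper's argument. One small caveat you already flag: with the \emph{standard} $\mathrm{Beta}(1,k)$ density $k(1-x)^{k-1}$ one gets $\E(\beta_{1,k}^j)=j!\,\Gamma(k+1)/\Gamma(k+j+1)$, which produces $\Gamma(k+1)\Gamma(k+m+n+1)/\bigl(\Gamma(k+m+1)\Gamma(k+n+1)\bigr)$ rather than the stated ratio; the formula in the lemma matches the indexing used in the synoptic table ($\beta_{1,k-1}$) and in the product over $k=0,\dots,N-1$ of Proposition~\ref{momcue}. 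This is purely a shift in the label $k$ and does not affect the method.
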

For a proof of this Lemma, see \cite{BHNY}. One deduces from it a proof of the decomposition of the distribution of the characteristic polynomial as a product of independent variables.
\begin{theorem}[Bourgade-Hughes-Nikeghbali-Yor] The characteristic polynomial of the $\CUE$ is distributed like a product of independent variables,
$$ Z \stackrel{d}{=} \prod_{k=1}^N (1+ \sqrt{\beta_{1,k}} e^{i \omega_k}).$$
\end{theorem}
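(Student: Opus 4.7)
The plan is to identify the two distributions by matching all mixed complex moments $\E(Z^m \overline{Z}^n)$ for $m,n \geq 0$. The two main ingredients are already at hand: Proposition \ref{momcue} furnishes these moments on the left-hand side explicitly, and the Lemma above computes the corresponding moments for a single factor $1+\sqrt{\beta_{1,k}}e^{i\omega_k}$ on the right-hand side. Only a factorization by independence and a moment-determinacy argument remain.

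First, I would compute the moments of the right-hand side. Since the variables $(\omega_k, \beta_{1,k})_{k=1}^N$ are mutually independent, the joint moment of the product factorizes into a product of moments of single factors. Applying the Lemma to each factor yields
\begin{equation*}
\E\!\left(\prod_{k=1}^N (1+\sqrt{\beta_{1,k}} e^{i\omega_k})^m \overline{(1+\sqrt{\beta_{1,k}} e^{i\omega_k})^n}\right)
= \prod_{k=1}^N \frac{\Gamma(k)\Gamma(k+m+n)}{\Gamma(k+m)\Gamma(k+n)}
= \prod_{k=0}^{N-1} \frac{k!(k+m+n)!}{(k+m)!(k+n)!}.
\end{equation*}
By Proposition \ref{momcue}, this is exactly the expression for $\E(Z^m \overline{Z}^n)$ with $Z = P_{U_N}(1)$. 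Thus all complex joint moments of the two sides agree.

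It then remains to argue that these moments characterize the distribution. Both random variables take values in a compact subset of $\C$: the factors in the product satisfy $|1+\sqrt{\beta_{1,k}}e^{i\omega_k}| \leq 2$, so the product is bounded by $2^N$, and similarly $|Z| \leq 2^N$ since it is a product of $N$ numbers on the disk of radius $2$. The moments $\E(Z^m \overline{Z}^n)$ determine all polynomial expectations in $\Re Z$ and $\Im Z$, and on a compact subset of $\R^2$ this uniquely determines the law by Stone--Weierstrass.

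The proof is essentially bookkeeping once the two moment formulas are in place: the actual content lies in Proposition \ref{momcue} and the Lemma. Accordingly, the potential obstacle is not in the combinatorial matching (which is transparent after shifting the index $k \mapsto k+1$) but in making sure one really does have the full family of mixed moments $\E(Z^m \overline{Z}^n)$ rather than just real moments $\E(|Z|^{2m})$; this is precisely what Proposition~\ref{momcue} provides, via the translation-invariance trick of Lemma~\ref{transinv} applied to $g(\theta)=(1-e^{i\theta})^m(1-e^{-i\theta})^n$.
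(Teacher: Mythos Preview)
Your proof is correct and follows exactly the approach the paper intends: match the mixed moments $\E(Z^m\overline{Z}^n)$ from Proposition~\ref{momcue} against the product of the single-factor moments given by the Lemma, and conclude by moment determinacy. The paper leaves this deduction implicit (``One deduces from it a proof of the decomposition\ldots''), and your write-up simply fills in those details, including the boundedness argument for moment determinacy.
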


\subsection{Partial symmetry and the Gaussian Unitary Ensemble}\label{GUE}
The same technique we have used on Ginibre and the CUE can be applied to processes with partial symmetry, such as the real line, or any star set -- that is, the preimage of $\mathbb{R}_+$ under the power map $\pi_M$. In this case, block decomposition does not hold for all powers, due to the loss of radial invariance, but only for divisors of the number $M$ characterizing the symmetry. On the real line, the symmetry group is $\Z_2$. The point of interest is therefore when $M=2$. It yields a decomposition in two independent blocks, that holds for any symmetric potential. In \cite{EdelmanLacroix}, Edelman and La Croix mentionned this as a natural generalization of their result for the GUE. We finally derive from these methods another result first established in \cite{EdelmanLacroix}, the decomposition of the law of the determinant of the GUE as a product of independent chi-squared variables.
\subsubsection{Power decomposition for processes on the real line.}
We consider a symmetric measure on the real line with density
$$
\dd \mu_V(x) = \frac{1}{Z_V} e^{- V \left( x^2 \right)},
$$ 
with respect to the Lebesgue measure, where the potential $V$ is chosen so that this measure has finite moments. These moments define the $\VGamma$ function, as in Definition \ref{Vgammafun}. We also assume, as before, that all distributions involved are characterized by their moments. We denote the $\beta$-ensemble on $\R$ with potential $V$ and $\beta=2$ by $\V_{\R}(N)$ . Its density is given by
$$ \frac{1}{N! D_V^{(N)}} \prod_{i<j} |x_i-x_j|^2  \dd \mu_V^{\otimes N} ( \bx )$$
where
$$D_V^{(N)}= \det \left( \VGamma\left(\frac{i+j-1}{2}\right) \right)_{i,j \in I_{N,2,1}} \det \left( \VGamma \left(\frac{i+j-1}{2} \right) \right)_{i,j \in I_{N,2,2}} = D_V^{(N,1)} D_V^{(N,2)}.$$
We compute the values of the constants $D_V^{(N)}, D_V^{(N,1)}, D_V^{(N,2)}$ in the case of the GUE in the proof of Proposition \ref{GUEmom}. \vspace{0.5cm}

The proofs of the following results all mimic Section \ref{PGDec}, {\it mutatis mutandis}. Notably, there seems to be no analog of Kostlan's theorem, nor independence of large powers.

\begin{corollary}[Product Statistics]\label{ProstatGUE} Let $E=\mathbb{C}$, $g \in L^2(\mu)$, and  $\{ \lambda_1, \dots, \lambda_N\}$ be $\V_{\R}(N)$ points. Then
$$ \E \left( \prod_{k=1}^N g (\lambda_k)  \right)= \frac{  1  }{D_V^{(N)}} \det \left(f_{i,j}\right)_{i,j = 1}^{N} \quad \text{where} \ f_{i,j} = \int_{\mathbb{R}} x^{i+j-2} g(x) \dd \mu_V(x).$$ 
\end{corollary}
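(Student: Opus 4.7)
The plan is a direct application of Andréief's identity (Lemma~\ref{Andr}), exactly parallel to the derivation of Corollary~\ref{ProStat} (Ginibre) and Corollary~\ref{ProstatV} (general complex radial), with the complex Gaussian reference measure replaced by the symmetric real measure $\dd\mu_V$.

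First I would write out the expectation against the $\V_\R(N)$ density,
$$
\E\!\left(\prod_{k=1}^N g(\lambda_k)\right) = \frac{1}{N!\,D_V^{(N)}}\int_{\R^N}\prod_{i<j}(x_i-x_j)^2\prod_{k=1}^N g(x_k)\,\dd\mu_V^{\otimes N}(\bx),
$$
and factor the squared Vandermonde as $\det(x_i^{j-1})_{i,j}\cdot\det(x_i^{j-1})_{i,j}$. Absorbing the scalar factor $\prod_k g(x_k)$ into the first determinant by multiplying its $i$-th row by $g(x_i)$ turns it into $\det\bigl(x_i^{j-1}g(x_i)\bigr)_{i,j}$. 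Then I would apply Andréief with $\phi_i(x)=x^{i-1}g(x)$, $\psi_j(x)=x^{j-1}$ and $\dd\nu=\dd\mu_V$ to obtain
$$
\int_{\R^N}\det\bigl(x_i^{j-1}g(x_i)\bigr)\det\bigl(x_i^{j-1}\bigr)\,\dd\mu_V^{\otimes N} = N!\det\!\left(\int_\R x^{i+j-2}g(x)\,\dd\mu_V(x)\right)_{i,j=1}^N = N!\det(f_{i,j}).
$$
The factorial cancels against the one in the normalization, leaving the claimed identity.

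There is no real obstacle; the argument is a routine transcription of the complex case. The only structural difference worth flagging is that, since $x$ is now real, $f_{i,j}$ depends only on the sum $i+j$, so the matrix is of Hankel type rather than having the generic entries of the Ginibre case. When $g$ is chosen as a function of $x^2$, the $x\mapsto -x$ symmetry of $\dd\mu_V$ then forces $f_{i,j}=0$ whenever $i+j$ is odd, producing a $2$-striped matrix in the sense of Lemma~\ref{Mstriped}. This is precisely the mechanism that the subsequent GUE block decomposition exploits, in place of the full rotational symmetry used in the Ginibre and $\CUE$ cases; its restriction to a $\Z_2$-symmetry is also why the approach here yields a decomposition only for $M=2$ and not for higher powers.
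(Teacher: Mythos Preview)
Your proposal is correct and is precisely the argument the paper has in mind: the paper does not spell out a proof for this corollary but simply says the results ``all mimic Section~\ref{PGDec}, \textit{mutatis mutandis},'' which is exactly the Andr\'eief computation you wrote down. Your additional remark on the Hankel/$2$-striped structure is also accurate and anticipates the subsequent block decomposition.
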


\begin{definition} For $k=1,2$ we define the root distribution $\rm{R}_{V, \mathbb{R}}(N,k)$ as the point process on $\mathbb{R}$ indexed by $I_{N,2,k}$ with joint density
$$
\frac{1}{c_k! D_V^{(N,k)}} \prod_{\substack{i<j \\ i,j \in I_k }} |x_i^2 - x_j^2|^2   \prod_{i \in I_{k}} x_i^{2(k-1)} \dd \mu_V (x_i),
$$
and the Power distribution $\V_{\R}(N,k)$ as the image of the root distribution under the power map $\pi_2$. In other words, $\V_{\R}(N,k)$ is the point process on $\mathbb{R}_+$, indexed by $I_{k}$, with joint density
$$
\frac{1}{c_k! D_V^{(N,k)}} \prod_{\substack{i<j \\ i,j \in I_k }} |x_i - x_j|^2  \prod_{i \in I_{k}} x_i^{k-3/2}  e^{- V(x_i)} \dd x_i.
$$
\end{definition}

\begin{theorem}[Edelman, La Croix] We have the equality in distribution
$$
\{\lambda_1^2, \dots, \lambda_N^2 \} \stackrel{d}{=} \{ x_1^2, \dots, x_N^2\},
$$
where the $(x_i)_{I_{k}}$ are distributed according to $\rm{R}_{V, \mathbb{R}}(N,k)$ and independent for different values of $k$. In other words, the distribution of the squares of $\V_{\R} (N)$ is a superposition of $2$ independent Power distributions:
$$ \V_{\R} (N)^2 \stackrel{d}{=} \rm{R}_V(N,1)^2 \cup \rm{R}_V(N,1)^2 \stackrel{d}{=} \V_{\R}(N,1) \cup \V_{\R}(N,2).$$
\end{theorem}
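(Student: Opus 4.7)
The plan is to mimic the proof of Theorem \ref{PG2} verbatim, replacing the full rotational symmetry of the Ginibre measure by the $\mathbb{Z}_2$ symmetry $x \mapsto -x$ of $\mu_V$. As in Section \ref{PGDec}, it suffices to compute the product statistics $\E\left( \prod_{i=1}^N g(\lambda_i^2) \right)$ for an arbitrary polynomial $g$, since these characterize the law of $\{\lambda_1^2,\dots,\lambda_N^2\}$ (symmetric polynomials in $\lambda_i^2$ are generated by products $\prod g(\lambda_i^2)$, cf.\ Lemma \ref{sympol}).

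First I would apply Corollary \ref{ProstatGUE} to the test function $x \mapsto g(x^2)$, obtaining
\[
\E\left( \prod_{i=1}^N g(\lambda_i^2) \right) = \frac{1}{D_V^{(N)}} \det \left( f_{i,j} \right)_{i,j=1}^N, \qquad f_{i,j} = \int_{\mathbb{R}} x^{i+j-2} g(x^2) \, \dd \mu_V(x).
\]
The key observation is that, by symmetry of $\mu_V$, the integrand $x^{i+j-2} g(x^2)$ is odd when $i+j$ is odd, so $f_{i,j} = 0$ whenever $i$ and $j$ have different parities. Hence the matrix $(f_{i,j})$ is $2$-striped in the sense of Lemma \ref{Mstriped}, and its determinant factors as a product of two minors indexed by $I_{N,2,1}$ (odd indices) and $I_{N,2,2}$ (even indices).

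Next I would identify each of these minors with the product statistics of a root distribution $\rm{R}_{V,\mathbb{R}}(N,k)$, adapting the argument of Proposition \ref{PG1} to the present setting. Concretely, for $k \in \{1,2\}$ the density of $\rm{R}_{V,\mathbb{R}}(N,k)$ can be written as $\tfrac{1}{c_k! D_V^{(N,k)}} \det(x_i^{2(j-1)+k-1})_{i,j} \det(x_i^{2(j-1)+k-1})_{i,j} \prod \dd\mu_V(x_i)$, because the exponents $2(j-1)+k-1$ range exactly over $I_{N,2,k}-1$. Applying Andr\'eief's identity (Lemma \ref{Andr}) to this density with the test function $g(x^2)$ gives
\[
\E\left( \prod_{i \in I_{k}} g(x_i^2) \right) = \frac{1}{D_V^{(N,k)}} \det\left( f_{a,b} \right)_{a,b \in I_{N,2,k}},
\]
which is precisely the $k$th factor appearing in the factorization above. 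Multiplying the two identities for $k=1,2$ and comparing with the full product statistic yields
\[
\E\left( \prod_{i=1}^N g(\lambda_i^2) \right) = \prod_{k=1}^{2} \E\left( \prod_{i \in I_k} g(x_i^2) \right),
\]
which is the desired factorization of moments; the equality in distribution of the point processes follows from Lemma \ref{sympol} and the fact that all distributions involved are characterized by their moments.

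The only step requiring genuine care is the bookkeeping in the third paragraph: one must check that the normalization constant of the full ensemble factors as $D_V^{(N)} = D_V^{(N,1)} D_V^{(N,2)}$ (this is already built into the definition given before the statement) and that the $2$-striped structure exactly matches the two arithmetic progressions of indices. Everything else is formally identical to the $\Gin(N)$ case, and the final sentence of the theorem, stating that $\V_{\R}(N,k)$ is the image of $\rm{R}_{V,\mathbb{R}}(N,k)$ under $\pi_2$, is a direct change of variables $y=x^2$ in the densities.
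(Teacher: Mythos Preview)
Your proposal is correct and follows exactly the approach the paper intends: the paper itself states that ``the proofs of the following results all mimic Section~\ref{PGDec}, \textit{mutatis mutandis},'' and your sketch carries out precisely that adaptation, replacing rotational invariance by the $\mathbb{Z}_2$ symmetry of $\mu_V$ to obtain a $2$-striped matrix and then identifying the two minors via Andr\'eief's identity.
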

\noindent For the quadratic potential, $\V_{\R} (N)$ is the distribution of GUE eigenvalues, and the two Power distributions corresponds to Laguerre-Wishart ensembles with half-integer parameters. That is, one recovers the decomposition of Edelman and La Croix introduced in \cite{EdelmanLacroix} for the singular values of the GUE.

\subsubsection{The determinant of the GUE.}
Let $H_N$ be a $\mathrm{GUE}$ matrix, and 
$$P_{H_N}(x) = \det \left( H_N - x \mathrm{Id} \right).$$
One consequence of Edelman and La Croix's results on the GUE is that the absolute value of the determinant of a GUE matrix (that is, $|P_{H_N}(0)|$) is distributed like a product of independent $\chi^2$ variables with explicit parameters. This is a striking similarity with the CUE case reviewed in Subsection \ref{CUEPol}, and as above we give a proof of this result relying on Pascal matrices. The potential is now the usual quadratic one, $V(x)=-\frac{x^2}{2}$, and we denote the normalization constants simply $D^{(N)}, D^{(N,1)}$ and $D^{(N,2)}$. We also recall the following definition: for $k=1,2$,
$$
I_k = \{ i\in [\![ 1,N ]\!] \ | \ i =k \mod 2 \}, \qquad c_k = |I_k|.
$$

\begin{proposition}\label{GUEmom} The moments of $\Pi=P_{H_N}(0)$ are given by the expressions
\begin{align*}
& \E \left( \Pi^{2m+1} \right) =0, \\
\text{and} \qquad 
& \E \left( \Pi^{2m} \right) = 2^{m N} \prod_{l_1=1}^{c_1} \frac{ \Gamma(l_1+m-1/2)}{ \Gamma(l_1-1/2) } \prod_{l_2=1}^{c_2} \frac{\Gamma(l_2+m+1/2)}{\Gamma(l_2+1/2) }.
\end{align*}

\end{proposition}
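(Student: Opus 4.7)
Following the same approach as for the CUE in Proposition~\ref{momcue}, the idea is to apply the Andréief identity from Corollary~\ref{ProstatGUE} with $g(x) = x^p$. Since $\Pi^p = \prod_{k=1}^N \lambda_k^p$, this gives
$$
\E\bigl(\Pi^p\bigr) \;=\; \frac{1}{D_V^{(N)}}\,\det\bigl(f_{i,j}\bigr)_{i,j=1}^N,\qquad
f_{i,j} = \int_{\R} x^{i+j-2+p}\,\dd\mu_V(x),
$$
so that each $f_{i,j}$ is a centered Gaussian moment that vanishes whenever $i+j+p$ is odd.

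When $p = 2m+1$, the only nonzero entries are those with $i$ and $j$ of opposite parity. Reindexing by parity brings $(f_{i,j})$ to an antidiagonal block form with square diagonal zero blocks of sizes $c_1 \times c_1$ and $c_2 \times c_2$. Combined with the $\lambda \mapsto -\lambda$ symmetry of the GUE, this yields $\E(\Pi^{2m+1}) = 0$.

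When $p = 2m$, the same parity-reindexing places $(f_{i,j})$ in block-diagonal form with two Hankel-type blocks
$$
(M_1)_{a,b} = \frac{\VGamma(a+b+m-3/2)}{Z_V},\qquad
(M_2)_{a,b} = \frac{\VGamma(a+b+m-1/2)}{Z_V},
$$
for $a,b \in \{1,\dots,c_k\}$, obtained by indexing $i=2a-1,\,j=2b-1$ on $I_1$ and $i=2a,\,j=2b$ on $I_2$ and using the elementary identity $\int x^{2k}\,\dd\mu_V = \VGamma(k+1/2)/Z_V$. The core task is then to evaluate both these determinants and the corresponding $m=0$ ones making up $D_V^{(N,1)}$ and $D_V^{(N,2)}$. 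For this I would rely on the classical Hankel identity
$$
\det\bigl(\Gamma(a+b+\alpha-2)\bigr)_{a,b=1}^n \;=\; \prod_{k=0}^{n-1} k!\,\Gamma(k+\alpha),
$$
itself an application of Andréief's identity backwards to the Laguerre ensemble with weight $x^{\alpha-1}e^{-x}$. Taking $\alpha \in \{m+1/2,\,m+3/2\}$ for the two numerators and $\alpha \in \{1/2,\,3/2\}$ for the two denominators, all the $k!$ prefactors cancel, leaving exactly the telescoping products $\prod_{l_1=1}^{c_1}\Gamma(l_1+m-1/2)/\Gamma(l_1-1/2)$ and $\prod_{l_2=1}^{c_2}\Gamma(l_2+m+1/2)/\Gamma(l_2+1/2)$.

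The step requiring the most care is the bookkeeping producing the prefactor $2^{mN}$. The GUE corresponds to $V(t) = t/2$, whence $\VGamma(s) = 2^s\,\Gamma(s)$; factoring the $2^s$ prefactors out of the $\VGamma$'s in $M_1 \oplus M_2$ and in $D_V^{(N,1)} \oplus D_V^{(N,2)}$, each of the $N$ rows contributes an additional factor $2^m$ in the numerator relative to the denominator, producing exactly $2^{Nm}$, while the $Z_V^{-N}$ factors coming from the $c_1+c_2 = N$ entries of $M_1 \oplus M_2$ cancel against the analogous $Z_V^{-N}$ implicit in viewing $D_V^{(N)}$ as a Hankel determinant of moments of $\mu_V$. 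This accounting, rather than any single hard step, is the main technical difficulty.
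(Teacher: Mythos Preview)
Your approach is correct and structurally identical to the paper's: apply Corollary~\ref{ProstatGUE} with $g(x)=x^{2m}$, split the resulting matrix by parity into two Hankel blocks, and reduce to evaluating determinants of the form $\det\bigl(\Gamma(a+b+\alpha-2)\bigr)_{a,b=1}^{n}$ for half-integer $\alpha$. The one substantive difference lies in how this Hankel determinant is computed. You invoke it as a classical identity coming from the Laguerre normalization (Andr\'eief backwards with weight $x^{\alpha-1}e^{-x}$); the paper instead recycles its Pascal-minor computation from Proposition~\ref{momcue}, which gives the identity for non-negative integer $\alpha$, and then observes that $Q(\alpha)=\det\bigl(\Gamma(i+j+\alpha-1)/(\Gamma(i)\Gamma(j+\alpha))\bigr)$ is a polynomial in $\alpha$ taking the value $1$ at every non-negative integer, hence identically $1$. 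Both arguments are valid; the paper's route keeps the article self-contained and ties the GUE and CUE computations together, while yours is more direct if the Laguerre normalization is taken as known. The $2^{mN}$ bookkeeping and the parity indexing $i=2a-1$, $i=2a$ are handled identically in both.
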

\begin{proof} The odd moments of $\Pi$ are zero by symmetry. To compute the even moments, we apply Corollary \ref{ProstatGUE} with $g(x)=x^{2m}$.
$$
\E \left( \Pi^{2m} \right) = \E \left( \prod_{k=1}^N \lambda_k^{2m}  \right)  = \frac{  1  }{D^{(N)}} \det \left( \int_{\mathbb{R}} x^{i+j-2} x^{2m} e^{-x^2/2} \dd x \right)_{i,j = 1}^{N}
$$
This determinant splits into two blocks, and we use that
$\int_{\mathbb{R}} t^{\alpha} e^{-\frac{t^2}{2}} \dd t = 2^{\frac{\alpha+1}{2}} \Gamma( \frac{\alpha+1}{2})$
to find
\begin{align*}
& \frac{1}{D^{(N,1)}} \det \left( \int_{\mathbb{R}} x^{i+j-2} x^{2m} e^{-x^2/2} \dd x \right)_{i,j \in I_1} \frac{1}{D^{(N,2)}} \det \left( \int_{\mathbb{R}} x^{i+j-2} x^{2m} e^{-x^2/2} \dd x \right)_{i,j \in I_2} \\
& =  \frac{1}{D^{(N,1)}} \det \left( 2^{ \frac{2a+2b+2m-3}{2}} \Gamma\left( \frac{2a+2b+2m-3}{2} \right) \right)_{a,b=1}^{c_1} \\
& \hspace{3cm} \times \frac{1}{D^{(N,2)}} \det \left( 2^{ \frac{2a+2b+2m-1}{2}} \Gamma\left(  \frac{2a+2b+2m-1}{2} \right) \right)_{a,b=1}^{c_2} 
\end{align*}
where the elements $(i,j) \in I_1$ have been written as $(2a-1, 2b-1)$ and those of $I_2$ as $(2a,2b)$. We have seen in the proof of Proposition \ref{momcue} that any minor of Pascal's matrix of the type
$$ \left( \binom{i+j+m-2}{i-1} \right)_{i,j=1}^{N},$$
where $m$ is any non-negative integer, has determinant $1$. Therefore, 
$$ Q(x) = \det  \left( \frac{ \Gamma(i+j+x-1)}{\Gamma(i) \Gamma(j+x)} \right)_{i,j=1}^{N} $$
is a polynomial in $x$ taking the value $1$ infinitely often. As a consequence it is constant equal to $1$, and for any real parameter $x$,
$$  \det  \left( { \Gamma(i+j+x-1)} \right)_{i,j=1}^{N} = \prod_{i=1}^N \Gamma(i) \prod_{j=1}^N \Gamma(j+x).$$
This gives us the value of the normalization constants
\begin{align*}
D^{(N,1)} & = 2^{c_1^2 -\frac{c_1}{2}} \prod_{k=1}^{c_1} \Gamma(k) \Gamma(k-\frac{1}{2}) \\
\text{and} \qquad D^{(N,2)} & =2^{c_2^2 + \frac{c_2}{2}} \prod_{k=1}^{c_2} \Gamma(k) \Gamma(k+\frac{1}{2}),
\end{align*}
and also yields the expected formula for the moments. Since $c_1+c_2=N$, we have
$$
\E \left( \Pi^{2m} \right) = 2^{m(c_1+c_2)} \prod_{l_1=1}^{c_1} \frac{ \Gamma(l_1+m-1/2)}{ \Gamma(l_1-1/2) } \prod_{l_2=1}^{c_2} \frac{\Gamma(l_2+m+1/2)}{\Gamma(l_2+1/2) }.
$$
This concludes the proof.
\end{proof}

We deduce from these moments a direct proof of the decomposition of the determinant of the GUE as a product of independent variables.

\begin{theorem}[Edelman, La Croix] The value of the GUE characteristic polynomial at $0$ is distributed like the product of independent variables,
$$
P_{H_N}(0) =  \prod_{k=1}^N \lambda_k \stackrel{d}{=} (-1)^{\epsilon} \prod_{k=1}^N \chi^2({2 \lfloor k/2 \rfloor+1}), 
$$
 where $\epsilon$ is a Bernoulli random variable of parameter $p=\frac{1}{2}$, and $\chi^2(m)$ are chi-squared variables with $m$ degrees of freedom. 
\end{theorem}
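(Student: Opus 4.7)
The plan is to match moments on both sides, using Proposition \ref{GUEmom}, which already gives an explicit product formula for $\E(\Pi^{2m})$, and the well-known moments of the chi-squared distribution. Once the moments agree and moment determinacy is checked, equality in distribution follows.

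First, I would split the proposed product according to the parity of $k$. For $k=2l-1$ odd, $2\lfloor k/2 \rfloor + 1 = 2l-1$, so the variable is $\chi^2(2l-1)$; for $k=2l$ even, $2\lfloor k/2 \rfloor + 1 = 2l+1$, giving $\chi^2(2l+1)$. Using the classical formula $\E[(\chi^2(n))^m] = 2^m \Gamma(m + n/2)/\Gamma(n/2)$ and independence of the factors, one gets
\begin{equation*}
\E \left( \prod_{k=1}^N \chi^2(2\lfloor k/2 \rfloor + 1)^m \right)
= 2^{m N} \prod_{l_1=1}^{c_1} \frac{\Gamma(l_1 + m - 1/2)}{\Gamma(l_1 - 1/2)} \prod_{l_2=1}^{c_2} \frac{\Gamma(l_2 + m + 1/2)}{\Gamma(l_2 + 1/2)},
\end{equation*}
where $c_1, c_2$ are the number of odd (resp. even) indices in $[\![1,N]\!]$, in agreement with the convention of Proposition~\ref{GUEmom}. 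This coincides exactly with $\E(\Pi^{2m})$ from that proposition.

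Next, multiplication by $(-1)^\epsilon$ with $\epsilon$ a fair Bernoulli leaves the absolute value untouched and sends $X \mapsto -X$ with probability $1/2$, so all odd moments vanish while even moments are unchanged. Since Proposition~\ref{GUEmom} also asserts $\E(\Pi^{2m+1}) = 0$, all integer moments of the two sides agree.

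It remains to invoke moment determinacy: the chi-squared moments grow like $\Gamma(m + \text{cst})$, so the moments of the product grow at most like $(mN)!$ in a controlled way, and one can check Carleman's condition (equivalently, since each factor is moment-determinate and the product of finitely many independent moment-determinate variables with light tails is moment-determinate). The same holds for $P_{H_N}(0)$, whose law has Gaussian-type tails inherited from the GUE. Equality of all moments then forces equality in distribution, completing the proof. The only mildly delicate step is the moment-determinacy verification; everything else is essentially a bookkeeping match between the formula of Proposition~\ref{GUEmom} and the chi-squared moments, split according to parity.
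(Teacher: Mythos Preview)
Your proof is correct and follows essentially the same approach as the paper: compute the moments of the product of independent chi-squared variables using the formula $\E[(\chi^2(n))^m] = 2^m \Gamma(m+n/2)/\Gamma(n/2)$, split according to parity, match them with the expression from Proposition~\ref{GUEmom}, and conclude by moment determinacy. Your write-up is in fact more explicit than the paper's, which leaves the parity split and the matching as an exercise for the reader.
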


\begin{proof}
The chi-squared distribution with $k$ degrees of freedom has density
$$
\frac{1}{2^{\frac{k}{2}} \Gamma(\frac{k}{2}) } t^{\frac{k}{2}-1} e^{-\frac{t}{2}}.
$$
with respect to the Lebesgue measure on $\mathbb{R}_+$. Changing variables shows that
$$
\chi^2(k)= 2 \gamma_{\frac{k}{2}}.
$$
Therefore the moments of $\chi^2(k)$ are given by
$$
\E \left(  \chi_k^s \right) = \frac{2^s \Gamma(\frac{k}{2}+s)}{\Gamma(\frac{k}{2})} .
$$
This allows one to check that the moments of $P_{H_N}(0)$ match those of a product of independent such variables. Since the chi-squared distributions are characterized by their moments, we deduce 
$$
P_{H_N}(0) \stackrel{d}{=} (-1)^{\epsilon} \prod_{l_1=1}^{c_1} \chi^2(2l_1-1) \prod_{l_2=1}^{c_2} \chi^2(2l_2+1) = (-1)^{\epsilon} \prod_{k=1}^N \chi^2({2 \lfloor k/2 \rfloor+1}), 
$$
where all variables are independent.
\end{proof}

\subsection{Conditional independence for beta ensembles}\label{BetaDec}
Kostan's Theorem \ref{Kostlan} and Theorem \ref{HighPowers1} have a natural formulation in terms of conditional independence for radially symmetric beta ensembles, when the inverse temperature beta is an even integer. We assume from now on that the complex random points $(\lambda_1, \dots, \lambda_N)$ are distributed according to definition \ref{Betadef} with $\beta=2p$. 

\subsubsection{Conditional independence.}

In this Section, $I$ is a random variable with values in an discrete index space $A$. We say that a collection of random variables are conditionally independent if they are independent, conditionally on $I$. \medskip

Conditional independence is not an unusual property of random variables. De Finetti's theorem gives a general setting in which such a feature appears (see \cite{Diaconis}). One can think of $I$ as a random environment, on which the distributions of the variables depend. This is equivalent to saying that for every continuous and bounded functions,
$$
\E \left( \prod_{k=1}^N f_k(X_k) \right) 
=
 \sum_{a \in A} \PP\left(I=a\right) \prod_{k=1}^N \E \left( f_k(X_k) \ | \ I=a \right).
$$

We denote by $\bu$ the vector $(u_1, \dots, u_N) \in \bZ^N$, and by $K_{N,p}(\bu)$ the coefficient of the monomial $T_1^{u_1}\dots T_N^{u_N}$ in the $p$-th power of the Vandermonde determinant in $N$ variables $T_1, \dots, T_N$,
$$
\Delta(T_1, \dots, T_N)^p = \sum_{\bu \in Z^N} K_{N,p}(\bu) \prod_{i=1}^N T_i^{u_i}.
$$
The variable $I$ we consider takes values in $A=\bZ^N$ and depends on $N,p$ and the potential $V$. Its distribution is given by the weights
\begin{equation}\label{CoinI}
\P \left(I= \bu \right) = \frac{1}{Z_{2p,N}} K_{N,p}^2(\bu) \prod_{i=1}^N \VGamma(1+u_i)
\end{equation}
we will prove later on that this defines a probability measure on $\Z^N$.

\subsubsection{Two general results of conditional independence.}
We present a generalization of Theorems \ref{Kostlan} and \ref{HighPowers1} to the above setting. Both establish conditional independence with respect to the same latent variable $I$, described above. The first result is the analog of Kostlan's independence theorem for the radii, first established in \cite{Kost}.
\begin{theorem}[Conditional independence of the radii]\label{TossIndRadii} The squared Radii of the beta ensemble with $\beta=2p$ and radial potential $V$ are conditionally independent. Conditioning on the event $\{ I=\bu \}$, the following equality in distribution holds:
$$
\{ |\la_1|^2, \dots, |\la_N|^2 \}
\stackrel{d}{=} 
\{ X_1 , \dots, X_N \},
$$
where the variables $X_1, \dots, X_N$ have independent $\VGamma$ distributions with parameters $1+u_1, \dots, 1+u_N$.
\end{theorem}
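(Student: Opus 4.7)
The strategy is to follow the product-statistics method of Subsection~\ref{ProofTechnique}: by Lemma~\ref{sympol} and moment determinacy of the $\gamma(V,\alpha)$ distributions (guaranteed by the integrability condition \eqref{finiteness}), it is enough to match $\E\prod_{i=1}^N f(|\lambda_i|^2)$ for every polynomial $f$ against the claimed mixture $\sum_{\bu}\P(I=\bu)\prod_{i=1}^N\E_{\gamma(V,1+u_i)}(f)$. I plan to compute the left-hand side directly from the joint density of Definition~\ref{Betadef}, by inserting the polynomial expansion of $\Delta^p$ used to define \eqref{CoinI} and exploiting the rotational symmetry of the reference measure.

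The core calculation is to expand
$$\prod_{i<j}|z_j-z_i|^{2p}
= \Delta(z)^p\,\overline{\Delta(z)^p}
= \sum_{\bu,\bv\in\Z^N}K_{N,p}(\bu)\,K_{N,p}(\bv)\prod_{i=1}^N z_i^{u_i}\bar z_i^{v_i},$$
which is a \emph{finite} sum since $\Delta^p$ is a polynomial, so no issue of interchange arises. Inserting this into the density and integrating against $\prod_i f(|z_i|^2)\,e^{-V(|z_i|^2)}$, the integral factorises coordinate by coordinate. Each one-variable factor $\int_{\C} z^{u_i}\bar z^{v_i}f(|z|^2)e^{-V(|z|^2)}\,\dd m(z)$ vanishes unless $u_i=v_i$ by invariance under $z\mapsto e^{i\theta}z$; on the diagonal it reduces in polar coordinates to $\pi\,\VGamma(1+u_i)\,\E_{\gamma(V,1+u_i)}(f)$. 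Only the terms with $\bu=\bv$ survive, and the $\pi^N$ so produced cancels against the $\pi^N$ in the normalising constant of Definition~\ref{Betadef}, yielding
$$\E\prod_{i=1}^N f(|\lambda_i|^2)
= \sum_{\bu\in\Z^N}\frac{K_{N,p}(\bu)^2\prod_i\VGamma(1+u_i)}{Z_{2p,N}}\prod_{i=1}^N\E_{\gamma(V,1+u_i)}(f).$$

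The weight is precisely the one in \eqref{CoinI}, so this identity reads $\sum_\bu\P(I=\bu)\prod_i\E_{\gamma(V,1+u_i)}(f)$, which is the announced mixture. Specialising to $f\equiv 1$ simultaneously verifies that \eqref{CoinI} defines a probability measure on $\Z^N$ and normalises $Z_{2p,N}$ correctly. The extension from product statistics to arbitrary symmetric statistics of the multiset $\{|\lambda_i|^2\}$ follows from Lemma~\ref{sympol} combined with moment determinacy, concluding the argument. Because the sum over $\bu$ is finite and the rotational invariance of $e^{-V(|z|^2)}\dd m(z)$ performs the essential selection $\bu=\bv$, the argument involves no real analytic obstacle; the one point that demands care is the bookkeeping observation that $K_{N,p}(\bu)^2\prod_i\VGamma(1+u_i)$ is invariant under permutations of $\bu$, which is exactly what is needed for the formulation as an unordered multiset identity to be consistent.
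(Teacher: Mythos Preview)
Your proposal is correct and follows essentially the same approach as the paper: expand $|\Delta|^{2p}$ via the coefficients $K_{N,p}$, use rotational invariance to reduce to the diagonal $\bu=\bv$, identify the surviving integrals as $\VGamma$-moments, and conclude via Lemma~\ref{sympol}. Your write-up is in fact slightly more careful than the paper's, explicitly noting the finiteness of the sum, the cancellation of the $\pi^N$ factors, and the permutation invariance needed for the multiset formulation.
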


\begin{proof}
We expand the joint density to compute the product statistics, for any measurable function $g$.
\begin{align*}
\E \left( \prod_{i=1}^N g(|\lambda_i|^2)\right) & = \frac{1}{\pi^N Z_{2p,N}} \int_{\C^N} \prod_{i<j} |z_i-z_j|^{2p} \prod_{i=1}^N g(|z_i|^2) e^{- V(|z_i|^2)} \dd m(z_i) \\
& = \frac{1}{\pi^N Z_{2p,N}} \sum_{\bu,\bv \in \Z^N} \int_{\C^N} K_{N,p}(\bu) K_{N,p}(\bv) \prod_{i=1}^N z_i^{u_i} \overline{z_i}^{v_i} g(|z_i|^2) e^{- V(|z_i|^2)} \dd m(z_i).
\end{align*}
A polar change of coordinate shows that only the terms for which $\bu =\bv$ make a non zero contribution, so that there remains
$$
 \frac{1}{Z_{2p,N}} \sum_{\bu \in \Z^N} \int_{\R_+^N} K_{N,p}(\bu)^2 \prod_{i=1}^N r_i^{u_i} g(r_i) e^{- V(r_i)} \dd r_i,
$$
where $r_i=|z_i|^2$. This in turn can be written as
$$
 \frac{1}{Z_{2p,N}} \sum_{\bu \in \Z^N} K_{N,p}(\bu)^2 \prod_{i=1}^N \int_{\R_+} g(r) r^{u_i} e^{- V(r)} \dd r =  \frac{1}{Z_{2p,N}} \sum_{\bu \in \Z^N} K_{N,p}(\bu)^2 \prod_{i=1}^N \VGamma(1+u_i) \E \left( g\left( X_i \right) \right)
$$
where $X_i \stackrel{d}{=} \gamma(V,1+u_i)$ for every $i$, as claimed. For $g=1$, this tells us that
\begin{equation}\label{ConstantZ}
Z_{2p,N} = \sum_{\bu \in \Z^N} K_{N,p}(\bu)^2 \prod_{i=1}^N \VGamma(1+u_i)
\end{equation}
so that (\ref{CoinI}) defines a probability measure. \medskip

As the above holds for any polynomial $g$, by Lemma \ref{sympol} in the Appendix this characterizes the distribution of the squared radii, and establishes conditional independence. 
\end{proof}

The second result is the analog of Rains' independence theorem for the high powers, first established in \cite{Rains1}. Note that for $p=1$ we recover the optimal bound $M \geq N$.

\begin{theorem}[Conditional independence of high powers]\label{TossIndHighPowers} For any integer $M \geq (N-1)p+1$, the image of the beta ensemble with $\beta=2p$ and radial potential $V$ exhibits conditional independence. Conditioning on the event $\{ I=\bu \}$, the following equality in distribution holds:
$$
\{ \la_1^M, \dots, \la_N^M \}
\stackrel{d}{=} 
\{ X_1^{M/2} e^{i \theta_1}, \dots, X_N^{M/2} e^{i \theta_N} \},
$$
where the variables $\theta_k, X_k$ are all independent, the angles are uniform on $[0,2\pi]$, and the variables $X_1, \dots, X_N$ have independent $\VGamma$ distributions with parameters $1+u_1, \dots, 1+u_N$..  
\end{theorem}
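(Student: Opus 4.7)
The plan is to mirror the strategy used in Theorem~\ref{TossIndRadii}, replacing radial test functions by the mixed polynomial test functions that appeared in the proof of Theorem~\ref{HighPowers2}, with the hypothesis $M \geq (N-1)p+1$ playing the role that $M \geq N$ played for Ginibre. (In particular, $p=1$ recovers exactly that earlier bound.)

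For a test polynomial $g \in \mathbb{C}[X,\overline{X}]$, the first step is to expand the product statistic
$$
\E\left(\prod_{i=1}^N g(\lambda_i^M,\overline{\lambda_i}^M)\right)
= \frac{1}{\pi^N Z_{2p,N}}\int_{\C^N}\prod_{i<j}|z_i-z_j|^{2p}\prod_i g(z_i^M,\overline{z_i}^M)\,e^{-V(|z_i|^2)}\dd m(z_i),
$$
using $|\Delta|^{2p}=\Delta^p\,\overline{\Delta^p}=\sum_{\bu,\bv}K_{N,p}(\bu)K_{N,p}(\bv)\prod z_i^{u_i}\overline{z_i}^{v_i}$ and $g(X,\overline X)=\sum_{a,b}c_{a,b}X^a\overline X^b$. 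Polar integration in each variable then contributes $\pi\,\VGamma(1+u_i+Ma_i)$ times the Kronecker symbol $\delta_{u_i+Ma_i,\,v_i+Mb_i}$, collapsing most cross-terms.

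The key observation is that $K_{N,p}(\bu)$ vanishes unless every coordinate satisfies $0\leq u_i\leq p(N-1)$, since $\Delta^p$ is of degree $p(N-1)$ in each variable. Hence $|u_i-v_i|\leq p(N-1)<M$ by the hypothesis on $M$, and the constraint $u_i-v_i=M(b_i-a_i)$ forces $u_i=v_i$ and $a_i=b_i$ for every $i$. Only the ``diagonal'' terms survive, and the product statistic reduces to
$$
\frac{1}{Z_{2p,N}}\sum_{\bu\in\Z^N}K_{N,p}(\bu)^2\prod_{i=1}^N\sum_{a_i}c_{a_i,a_i}\,\VGamma(1+u_i+Ma_i).
$$
On the other hand, a direct computation of the same statistic under the claimed conditional law gives the same sum: conditional on $\{I=\bu\}$, the angular uniformity of each $\theta_i$ selects exactly the terms with $a=b$ in $\E(g(X_i^{M/2}e^{iM\theta_i},X_i^{M/2}e^{-iM\theta_i}))$, and the resulting $\VGamma$-moment $\E(X_i^{Ma})=\VGamma(1+u_i+Ma)/\VGamma(1+u_i)$ combines with the weight $\P(I=\bu)$ from \eqref{CoinI} to telescope into the identical expression.

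Since this equality holds for every mixed polynomial $g$, Lemma~\ref{sympol2} (together with the paper's standing assumption that the distributions at hand are moment-determined) yields the claimed equality in distribution. The main obstacle I anticipate is the bookkeeping in matching the two sides term by term; the real content is the sharp degree estimate $u_i\leq p(N-1)$, which makes the threshold $M\geq(N-1)p+1$ optimal for this argument: any smaller $M$ would permit a nonzero relative degree $b_i-a_i$ with $|u_i-v_i|\leq p(N-1)\geq M$, spoiling the diagonalization and allowing genuine cross-terms that would couple the $\lambda_i^M$ beyond the conditional factorization.
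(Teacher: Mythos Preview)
Your proposal is correct and follows essentially the same route as the paper's own proof: expand $|\Delta|^{2p}$ via the coefficients $K_{N,p}$, use the degree bound $0\le u_i\le p(N-1)$ to force $\bu=\bv$ under the hypothesis $M>p(N-1)$, and conclude by matching the surviving diagonal sum with the conditional law and invoking Lemma~\ref{sympol2}. The only cosmetic difference is that you expand $g$ into monomials $c_{a,b}X^a\overline{X}^b$ explicitly and track the indices $(a_i,b_i)$, whereas the paper keeps $g(z_i^M)$ intact longer and phrases the constraint as $u_i-v_i\equiv 0\ [M]$; also note that in your final comparison the angle should be $e^{i\theta_i}$ rather than $e^{iM\theta_i}$ to match the statement (harmless, since both are uniform).
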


\begin{proof}
We expand the joint density to compute the product statistics, for any polynomial $g$.
\begin{align*}
\E \left( \prod_{i=1}^N g(\lambda_i^M)\right) & 
= \frac{1}{\pi^N Z_{2p,N}} \sum_{\bu,\bv \in \Z^N} \int_{\C^N} K_{N,p}(\bu) K_{N,p}(\bv) \prod_{i=1}^N z_i^{u_i} \overline{z_i}^{v_i} g(z_i^M) e^{- V(|z_i|^2)} \dd m(z_i).
\end{align*}
Writing $g$ as a sum of monomials, as in the proof of Theorem \ref{HighPowers2}, a polar change of coordinate shows that any term that makes a non zero contribution has relative degree $0$ in every variable. In particular, these are terms for which $u_i -v_i \equiv 0 [M]$ for every $i$. On the other hand, the Vandermonde determinant is a homogeneous polynomial of partial degree $N-1$ in each variable, and its $p$-th power therefore has degree $p(N-1)$ in each variable. It follows that, for $M> p(N-1)$, congruence is only possible if $\bu=\bv$, so that there remains
$$
 \frac{1}{\pi^N Z_{2p,N}} \sum_{\bu \in \Z^N} \int_{\C^N} K_{N,p}(\bu)^2 \prod_{i=1}^N |z_i|^{2 u_i} g(z_i^M) e^{- V(|z_i|^2)} \dd m(z_i).
$$
This in turn can be written as
$$
 \frac{1}{\pi^N Z_{2p,N}} \sum_{\bu \in \Z^N} K_{N,p}(\bu)^2 \prod_{i=1}^N \int_{\C} g(z^M) |z|^{2u_i} e^{- V(|z|^2)} \dd m(z). $$
It is straightforward to check that the expression
$$
\frac{1}{\pi \VGamma(\alpha)} |z|^{2 \alpha -2} e^{- V(|z|^2)}
$$
is the density of a complex variable with squared radius $\gamma(V,\alpha)$ and independent uniform argument $\theta$. Thus, one can write
$$
\E \left( \prod_{i=1}^N g(\lambda_i^M)\right)
=
\frac{1}{Z_{2p,N}} \sum_{\bu \in \Z^N} K_{N,p}(\bu)^2 \prod_{j=1}^N \VGamma(1+u_j) \E \left( g \left( X_j^{\frac{M}{2}} e^{i \theta_j} \right) \right)
$$
where $X_j \stackrel{d}{=} \gamma(V,1+u_j)$ for every $j$, as claimed. \medskip

As the above holds for any mixed polynomial $g(z,\overline{z})$, by Lemma \ref{sympol2} in the Appendix it characterizes the distribution of the set of powers, which establishes their conditional independence.
\end{proof}

Conditional independence with this specific latent variable $I$ appears naturally when studying some statistics of beta-ensembles for even $\beta$. The structure of intermediate powers in general, however, requires further study and proper understanding, as there seem to be no analog of Corollary \ref{ProStat} that would enable to generalize the block-decomposition that holds when $\beta=2$. \medskip

\subsubsection{Distribution of the latent variable $I$.}
It is a hard problem in general to study the distribution of the latent variable $I$. Indeed, there is no tractable formula for the coefficients $K_{N,p}$, and to generate them has exponential algorithmic complexity for large values of $N$.  \medskip

For $p=1$, the above results are coherent with those of Section \ref{PVDec}. Indeed, the latent variable $I$ then gives weight only to sequences $u_1, \dots, u_N$ such that $1+u_1, \dots, 1_+u_N$ corresponds to a permutation of $\llbracket 1,N \rrbracket$, every permutation $\sigma$ having the same weight. We can therefore give the following description of $I$,
$$
I= \left(\sigma(1)-1, \dots, \sigma(N)-1 \right) \qquad \text{where} \qquad \sigma \stackrel{d}{=} \rm{Unif}(\mathfrak{S}_N).
$$
The choice of $\sigma$, however, does not change the distribution of the set of variables, as the order of the variables is not taken into account. In this way, we recover independent variables of parameters $1,2, \dots, N$. \medskip

For $N=2$ and quadratic potential, we have the following remarkable identity.

\begin{theorem}
For $N=2$, $V(x)=x$, and any integer $p$, the distribution of $I$ is given by 
$$
I=(B, p-B) \qquad \text{where } B \stackrel{d}{=} \rm{Bin}(p,\frac{1}{2})
$$
\end{theorem}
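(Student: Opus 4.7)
The plan is to obtain the distribution of $I$ by direct evaluation of formula (\ref{CoinI}) in the case $N=2$, $V=\mathrm{Id}_{\R_+}$, exploiting the fact that for two variables the Vandermonde determinant is a single binomial expansion and the $\VGamma$ function reduces to the ordinary $\Gamma$.

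First, I would make the coefficients $K_{2,p}(\bu)$ fully explicit. Since
$$\Delta(T_1,T_2)^p = (T_2-T_1)^p = \sum_{k=0}^{p} (-1)^k \binom{p}{k} T_1^{k} T_2^{p-k},$$
the only $\bu=(u_1,u_2)\in\Z^2$ for which $K_{2,p}(\bu)\neq 0$ are those with $u_1+u_2 = p$ and $0\le u_1 \le p$, and then $K_{2,p}(k,p-k)^2 = \binom{p}{k}^2$. In particular $I$ is automatically supported on the simplex $\{(k,p-k):0\le k\le p\}$, so writing $I=(B,p-B)$ reduces the problem to identifying the law of the single integer $B$.

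Next, for the quadratic potential $V(x)=x$ one has $\VGamma(\alpha)=\Gamma(\alpha)$ by Definition \ref{Vgammafun}, so $\VGamma(1+u_i)=u_i!$. Plugging both pieces into (\ref{CoinI}) yields
$$\P\bigl(I=(k,p-k)\bigr) = \frac{1}{Z_{2p,2}} \binom{p}{k}^2 k!\,(p-k)! = \frac{p!}{Z_{2p,2}}\binom{p}{k},$$
after the cancellation $\binom{p}{k}^2 k!(p-k)! = p!\binom{p}{k}$. The normalizing constant is then pinned down by summing, using the binomial identity $\sum_{k=0}^{p}\binom{p}{k}=2^p$; this both confirms the value $Z_{2p,2}=p!\,2^p$ (consistent with (\ref{ConstantZ})) and gives $\P(B=k)=2^{-p}\binom{p}{k}$, which is exactly the $\mathrm{Bin}(p,1/2)$ distribution.

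There is no real obstacle here: once one notices that $N=2$ collapses the Vandermonde power to a single binomial and the quadratic potential turns $\VGamma$ into $\Gamma$, the claim is forced by the algebraic identity $\binom{p}{k}^2 k!(p-k)!=p!\binom{p}{k}$. The only bookkeeping subtlety is the sign convention in $\Delta(T_1,T_2)=\pm(T_2-T_1)$, which disappears upon squaring $K_{2,p}$.
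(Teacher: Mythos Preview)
Your proof is correct and essentially identical to the paper's: both expand $(T_1-T_2)^p$ to get $K_{2,p}(k,p-k)^2=\binom{p}{k}^2$, use $\VGamma(1+u_i)=u_i!$ for the quadratic potential, simplify $\binom{p}{k}^2 k!(p-k)!=p!\binom{p}{k}$, and normalize via $\sum_k\binom{p}{k}=2^p$ to obtain $Z_{2p,2}=2^p p!$ and the $\mathrm{Bin}(p,\tfrac12)$ law. Your explicit mention of the sign convention disappearing upon squaring is a nice touch the paper leaves implicit.
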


\begin{proof}
For $N=2$ the Vandermonde determinant is $T_1-T_2$, therefore the coefficients $K_{N,p}$ are given by
$$
K_{N,p}(k,l) = (-1)^{k} \binom{p}{k} \delta_{p-k,l},
$$
which yields the following weights
$$
K_{N,p}(k,l)^2 \ k! \ l! 
= \binom{p}{k}^2 \ k! \ l! \ \delta_{p-k,l} 
= p! \binom{p}{k} \delta_{p-k,l} 
$$
We deduce the value of the constant $Z_{2p,2}$ from (\ref{ConstantZ}), and the distribution of $I$ from (\ref{CoinI}),
$$
Z_{2p,2} = 2^p p! \ ,
\qquad
\P \left( I=(k,l) \right) = 2^{-p} \binom{p}{k} \delta_{p-k,l}, 
$$
which is the claimed $\rm{Bin}(p,\frac{1}{2})$ distribution.
\end{proof}

\section*{Appendix : the problem of moments}

At some point in the article we make the assumption that all relevant variables are characterized by their multivariate moments, which is to say that they are uniquely determined by the statistics
$$
\E \left( P(X_1, \dots, X_N) \right), \qquad P \in \C[T_1, \dots, T_N]
$$
when $(X_1, \dots, X_N)$ is an $N$-tuple of real random variables, and by the statistics
$$
\E \left( P(Z_1, \overline{Z_1}, \dots, Z_N, \overline{Z_N}) \right), \qquad P \in \C[T_1,S_1, \dots, T_{N},S_N]
$$
when $(Z_1, \dots, Z_N)$ is an $N$-tuple of complex random variables -- for clarity, we call such statistics mixed moments. \medskip

Our purpose here is not to inquire about the weakest possible assumptions under which the above is true. The following fact will be sufficient: it is known that a random variable $\mathbf{X}$ on $\R^N$ is characterized by its multivariate moments if it has exponential moments for any $\epsilon >0 $,
$$
\E  \left( e^{\epsilon \| \mathbf{X} \|} \right) < \infty.
$$
A proof of this result can be found, for instance, in \cite{DeJeu}. The same is true for complex variables with respect to the mixed moments, as these variables can be understood as taking values in $\R^{2N}$. The linear change of variable
$$
(T,S) \mapsto (T+iS, T-iS),
$$
ensures that a polynomial in the former variables is a polynomial in the latter, and vice versa. \medskip

The philosophy of most results contained in this article is that some features of point processes are somehow hidden if we look at the joint distribution of an $N$-tuple, but appear when we characterize the distribution of their set, obtained by taking an average over all permutations of the variables. Indeed, if $\rho$ is the joint density of an $N$-tuple of variables $(Z_1, \dots, Z_N)$, the joint density of their set is
$$
\rho_{\text{set}} (z_{1}, \dots, z_{N})
=
\frac{1}{N}
\sum_{\sigma \in \mathfrak{S}_N} \rho(z_{\sigma(1)}, \dots, z_{\sigma(N)}),
$$
and the statistics of this set are given by
$$
\E_{\text{set}} \left( f( Z_{1}, \dots, Z_{N} )\right)
=
\frac{1}{N}
\sum_{\sigma \in \mathfrak{S}_N} \E \left( f(Z_{\sigma(1)}, \dots, Z_{\sigma(N)}) \right).
$$
If the distribution of an $N$-tuple of real or complex variables is characterized by its moments, as we will always assume, then the set of these same variables is characterized by its symmetric moments. That is, the distribution of the set of variables is uniquely determined by the statistics
$$
\E \left( P(X_1, \dots, X_N) \right), \qquad P \in \rm{S}_{\C}(N) := \C [T_1, \dots, T_N]^{\mathfrak{S}_N}
$$
when $(X_1, \dots, X_N)$ is an $N$-tuple of real random variables, the space of symmetric polynomials $\rm{S}_{\C}(N)$ being defined by the property
\begin{equation}
\forall \sigma \in \mathfrak{S}_{N} \qquad P(T_{\sigma(1)}, \dots, T_{\sigma(N)}) = P(T_1,\dots, T_N).
\end{equation}
The analog statistics when $(Z_1, \dots, Z_N)$ is an $N$-tuple of complex random variables are given by
$$
\E \left( P(Z_1, \overline{Z_1}, \dots, Z_N, \overline{Z_N}) \right), \qquad P \in \rm{MS}_{\C}(N)
$$
where $\rm{MS}_{\C}(N)$ is the set of mixed symmetric polynomials, defined by the following invariance property
\begin{equation}
\forall \sigma \in \mathfrak{S}_{N} \qquad P(T_{\sigma(1)}, S_{\sigma(1)}, \dots, T_{\sigma(N)}, S_{\sigma(N)}) = P(T_1, S_1, \dots, T_N, S_N).
\end{equation}
Lemmas \ref{sympol} and \ref{sympol2} establish that we can restrain our study to a specific class of symmetric polynomials that span $\rm{S}_{\C}(N)$ (respectively $\rm{MS}_{\C}(N)$) as vector spaces. We give below a proof of these two essential Lemmas.

\begin{proof}[Proof of Lemma \ref{sympol}] It is enough to see that such expressions span the monomial symmetric polynomials, defined for any $N$-tuple of integers $(a_1, \dots, a_N)$ as
$$m_{(a_1,\dots ,a_N)} (T_1, \dots, T_N) = \sum_{\sigma \in \mathfrak{S}_N} \prod_{i=1}^N T_{\sigma(i)}^{a_{i}}$$
If $b_1, \dots, b_k$ are the distinct integers appearing in $(a_1, \dots, a_N)$, then for any parameter $t$ and any integer $M>N$ we expand the following element of $\rm{PS}_{\C}(N)$,
$$
Q_t(T_1, \dots, T_N):= \prod_{i=1}^N \Big( \sum_{j=1}^k t^{M^j} T_i^{b_j} \Big) = \sum_{\alpha_1+\dots+\alpha_k=N} t^{\sum \alpha_i M^i} m_{b_{\alpha}} (T_1, \dots, T_N)
$$
where $b_{\alpha}$ denotes the $N$-tuple where every $b_i$ is repeated $\alpha_i$ times. Note that $\sum \alpha_i M^i$ is an integer decomposition in base $M$ and thus characterizes the partition $\alpha$. For the sake of brevity we make use of the notation $\alpha \vdash N$ to denote partitions of $N$. \medskip

Applying this equality to a number of distinct values of $t$ equal to the number of integer partitions of~$N$, one expresses the vector $(Q_{t_{\lambda}})_{\lambda \vdash N}$ in terms of $(m_{b_{\alpha}})_{\alpha \vdash N}$ through the minor of an invertible Vandermonde determinant. The minor is itself invertible, and this gives us in turn an expression of $m_{(a_1,\dots ,a_N)} (T_1, \dots, T_N)$ as a linear combination of elements of $\rm{PS}_{\C}(N)$. \end{proof} 

The proof of the second Lemma goes along the very same lines, {\it mutatis mutandis}.

\begin{proof}[Proof of Lemma \ref{sympol2}] Instead of $N$-tuples, one considers $2N$-tuples of integers $(a_1, b_1 \dots, a_N, b_N)$ and monomial symmetric mixed polynomial defined by 
$$
m_{(a_1, b_1, \dots , a_N, b_N)} (Z_1, \dots, Z_N) = \sum_{\sigma \in \mathfrak{S}_N} \prod_{i=1}^N Z_{\sigma(i)}^{a_{i}} \overline{Z}_{\sigma(i)}^{b_{i}}
$$
If $(c_1,d_1), \dots, (c_k,d_k)$ are the distinct pairs of integers appearing in $\big((a_1, b_1), \dots, (a_N,b_N) \big)$, then for any parameter $t$ and any integer $M>N$ expanding the following element of $\rm{PMS}_{\C}(N)$,
$$
Q_t(Z_1,\overline{Z}_1 \dots, Z_N, \overline{Z}_N):= \prod_{i=1}^N \Big( \sum_{j=1}^k t^{M^j} Z_i^{c_j} \overline{Z}_i^{d_j} \Big) = \sum_{\alpha_1+\dots+\alpha_k=N} t^{\sum \alpha_i M^i} m_{(c,d)_{\alpha}} (Z_1, \dots, Z_N)
$$
where $(c,d)_{\alpha}$ denotes the $2N$-tuple where every pair $c_i,d_i$ is repeated $\alpha_i$ times. The same argument as above yields an expression of $m_{(a_1,b_1,\dots ,a_N,b_N)} (Z_1, \dots, Z_N)$ as a linear combination of elements of $\rm{PMS}_{\C}(N)$. \end{proof}

\section*{Acknowledgements}

The author would like to thank his advisor P.~Bourgade, as well as his colleagues K.~Mody and M.~Bilu for helpful discussions on these topics. \medskip

The work of the author is partially supported by his advisor's NSF grant DMS-1513587.

\begin{bibdiv}
\begin{biblist}

\bib{AkeBurda}{article}{
   author={Akemann, G.},
   author={Burda, Z.},
   title={Universal microscopic correlation functions for products of
   independent Ginibre matrices},
   journal={J. Phys. A},
   volume={45},
   date={2012},
   number={46},
   pages={465201, 18},
}

\bib{AmeurHeden}{article}{
   author={Ameur, Y.},
   author={Hedenmalm, H.},
   author={Makarov, N.},
   title={Fluctuations of eigenvalues of random normal matrices},
   journal={Duke Math. J.},
   volume={159},
   date={2011},
   number={1},
   pages={31--81},
}

\bib{Andreiev}{article}{
   author={Andr\'eief, M.C.},
   title={Note sur une relation entre les int\'egrales d\'efinies des produits des fonctions},
   journal={M\'emoires de la soci\'et\'e des sciences physiques et naturelles de Bordeaux},
   volume={2},
   date={1883},
   pages={1-14}
}

\bib{BHNY}{article}{
   author={Bourgade, P.},
   author={Hughes, C. P.},
   author={Nikeghbali, A.},
   author={Yor, M.},
   title={The characteristic polynomial of a random unitary matrix: a
   probabilistic approach},
   journal={Duke Math. J.},
   volume={145},
   date={2008},
   number={1},
   pages={45--69},
}

\bib{Circ2}{article}{
   author={Bourgade, P.},
   author={Yau, H.-T.},
   author={Yin, J.},
   title={The local circular law II: the edge case},
   journal={Probab. Theory Related Fields},
   volume={159},
   date={2014},
   number={3-4},
   pages={619--660},
}

\bib{Burda}{article}{ 
author={Burda, Z.},
   author={Nowak, M. A.},
   author={Swiech, A.},
   title={Spectral relations between products and powers of isotropic random matrices.},
   journal={Phys. Rev. E Stat. Nonlin. Soft Matter Phys.},
   volume={86},
   date={2012},
   number={6},
   pages={061137},
}

\bib{ChafaiPeche}{article}{
   author={Chafa\"\i , D.},
   author={P\'ech\'e, S.},
   title={A note on the second order universality at the edge of Coulomb
   gases on the plane},
   journal={J. Stat. Phys.},
   volume={156},
   date={2014},
   number={2},
   pages={368--383},
}

\bib{CostLeb}{article}{
   author={Costin, O.},
   author={Lebowitz, J. L.},
   title={Gaussian fluctuation in random matrices},
   journal={Phys. Rev. Lett.},
   volume={75},
   date={1995},
   number={1},
   pages={69--72},
}

\bib{DeJeu}{article}{
   author={de Jeu, M.},
   title={Determinate multidimensional measures, the extended Carleman
   theorem and quasi-analytic weights},
   journal={Ann. Probab.},
   volume={31},
   date={2003},
   number={3},
   pages={1205--1227},
}

\bib{Deift1}{article}{
   author={Deift, P.},
   author={Gioev, D.},
   title={Random Matrix Theory: Invariant Ensembles and Universality}, 
   journal={Courant Lecture Notes},
   volume={18},
   date={2009},
}

\bib{Diaconis}{article}{
   author={Diaconis, Persi},
   title={Finite forms of de Finetti's theorem on exchangeability},
   note={Foundations of probability and statistics, II},
   journal={Synthese},
   volume={36},
   date={1977},
   number={2},
   pages={271--281},
}

\bib{EdelmanLacroix}{article}{
   author={Edelman, A.},
   author={La Croix, M.},
   title={The singular values of the GUE (less is more)},
   journal={Random Matrices Theory Appl.},
   volume={4},
   date={2015},
   number={4},
   pages={1550021, 37},
}

\bib{Ginibre}{article}{
   author={Ginibre, J.},
   title={Statistical ensembles of complex, quaternion, and real matrices},
   journal={J. Mathematical Phys.},
   volume={6},
   date={1965},
   pages={440--449},
}

\bib{HKPV}{article}{
   author={Hough, J. B.},
   author={Krishnapur, M.},
   author={Peres, Y.},
   author={Vir\'ag, B.},
   title={Determinantal processes and independence},
   journal={Probab. Surv.},
   volume={3},
   date={2006},
   pages={206--229}
}

\bib{Hwang}{article}{
   author={Hwang, C.-R.},
   title={A brief survey on the spectral radius and the spectral
   distribution of large random matrices with i.i.d.\ entries},
   conference={
      title={Random matrices and their applications},
      address={Brunswick, Maine},
      date={1984},
   },
   book={
      series={Contemp. Math.},
      volume={50},
      publisher={Amer. Math. Soc., Providence, RI},
   },
   date={1986},
   pages={145--152},
}

\bib{KeatingSnaith}{article}{
   author={Keating, J.P.},
   author={Snaith, N.C.},
   title={Random Matrix Theory and $\zeta (1/2+it)$},
   journal={Comm. Math. Phys.},
   volume={214},
   date={2000},
   pages={57--89}
}

\bib{Kost}{article}{
   author={Kostlan, E.},
   title={On the spectra of Gaussian matrices},
   note={Directions in matrix theory (Auburn, AL, 1990)},
   journal={Linear Algebra Appl.},
   volume={162/164},
   date={1992},
   pages={385--388},
}

\bib{Krishnapur}{article}{
   author={Krishnapur, M.},
   title={Zeros of random analytic functions},
   journal={Ann. Prob.},
   volume={37},
   date={2009},
   pages={314--346},
}

\bib{Mehta}{book}{
   author={Mehta, M. L.},
   title={Random matrices},
   series={Pure and Applied Mathematics (Amsterdam)},
   volume={142},
   edition={3},
   publisher={Elsevier/Academic Press, Amsterdam},
   date={2004},
   pages={xviii+688},
}

\bib{Rains1}{article}{
   author={Rains, E. M.},
   title={High powers of random elements of compact Lie groups},
   journal={Probab. Theory Related Fields},
   volume={107},
   date={1997},
   number={2},
   pages={219--241},
   issn={0178-8051},
   review={\MR{1431220}},
}

\bib{Rains}{article}{
   author={Rains, E. M.},
   title={Images of eigenvalue distributions under power maps},
   journal={Probab. Theory Related Fields},
   volume={125},
   date={2003},
   number={4},
   pages={522--538},
}

\bib{RiderVirag}{article}{
   author={Rider, B.},
   author={Vir\'ag, B.},
   title={The noise in the circular law and the Gaussian free field},
   journal={Int. Math. Res. Not. IMRN},
   date={2007},
   number={2},
}

\bib{Sommers}{article}{
   author={Zyczkowski, K.},
   author={Sommers, H.-J.},
   title={Truncations of random unitary matrices},
   journal={J. Phys.},
   date={2000},
   number={A 33},
   pages={2045--2057},
}

\end{biblist}
\end{bibdiv}

\end{document}